\definecolor{shadecolor}{rgb}{0.8,0.8,0.8}
\newtheorem{theorem}{Theorem}[section]
\newtheorem{conjecture}[theorem]{Conjecture}
\newtheorem{lemma}[theorem]{Lemma}
\newtheorem{proposition}[theorem]{Proposition}
\newtheorem{corollary}[theorem]{Corollary}
\newtheorem{definition}[theorem]{Definition}
\newcommand{\btkz}{\begin{tikzpicture}}
\newcommand{\etkz}{\end{tikzpicture}}
\newenvironment{proof}{{\flushleft \emph{Proof}:}}{\hfill\ding{110}}
\newenvironment{comment}{{\flushleft \fontfamily{pzc}\bfseries\large Comment:}}{}
\newcommand{\secref}[1]{Section~\ref{#1}}
\newcommand{\defref}[1]{Definition~\ref{#1}}
\newcommand{\propref}[1]{Proposition~\ref{#1}}
\newcommand{\lemref}[1]{Lemma~\ref{#1}}
\newcommand{\corref}[1]{Corollary~\ref{#1}}
\newcommand{\Cof}{\operatorname{Cof}}
\newcommand{\GL}{\operatorname{GL}}
\newcommand{\End}{\operatorname{End}}
\newcommand{\Vol}{\text{Vol}}
\newcommand{\N}{\mathcal{N}}
\newcommand{\sig}{\sigma}
\newcommand{\M}{\mathcal{M}}
\newcommand{\image}{\operatorname{Image}}
\newcommand{\rank}{\operatorname{rank}}
\newcommand{\Span}{\operatorname{span}}
\newcommand{\id}{\operatorname{Id}}
\newcommand{\g}{\mathfrak{g}}
\newcommand{\h}{\mathfrak{h}}
\newcommand{\sgn}{\text{sgn}}
\newcommand{\R}{\mathbb{R}}
\renewcommand{\M}{\mathcal{M}}
\newcommand{\til}{\tilde}
\newcommand{\brk}[1]{\left(#1\right)}          % \brk{.}     => (.)
\newcommand{\Average}[1]{\left\langle#1\right\rangle}      % \Average{.} => <.>
\newcommand{\beq}{\begin{equation}}
\newcommand{\eeq}{\end{equation}}
\newcommand{\IP}[2]{\Average{#1,#2}}
\newcommand{\de}{\delta}
\newcommand{\ep}{\epsilon}
\newcommand{\extp}{\@ifnextchar^\@extp{\@extp^{\,}}}
\def\@extp^#1{\mathop{\bigwedge\nolimits^{\!#1}}}
\numberwithin{equation}{section}
\begin{document}
\title{Regularity via minors and applications to conformal maps}

\author{Asaf Shachar\footnote{Institute of Mathematics, The Hebrew University of Jerusalem.} }
%\author{Raz Kupferman \footnote{Institute of Mathematics, The Hebrew University.} \,and Asaf Shachar\footnote{Institute of Mathematics, The Hebrew University.}}

\date{}
\maketitle

\begin{abstract}
We prove that if the minors of degree $k$ of a Sobolev map $\R^d \to \R^d$ are smooth then the map is smooth, when $k,d$ are not both even. We use this result to derive a simple, self-contained proof of the famous Liouville theorem for conformal maps, under the weakest possible regularity assumptions, in even dimensions which are not multiple of $4$. This is based on the approach taken in \cite{Iwa93} by Iwaniec and Martin. We also prove the regularity of $W^{1,d/2}$ conformal maps between Riemannian manifolds, under the additional assumption of continuity.
%We also give a partial result for the case where both $k,d$ are even, and discuss some open questions. 
\end{abstract}

%We show how this result can be used to derive a simple, self-contained proof of the famous Liouville theorem for conformal mappings, under the weakest possible regularity assumptions, in even dimensions which are not multiple of $4$. This is based on the approach taken in \cite{Iwa93}. We also give a partial result for the case where both $k,d$ are even, and discuss some open questions. 

\tableofcontents
%%%%%%%%%%%%%%%%%%%%%%%%%%%%%%%%%%%%%%%%%%%%%

%%%%%%%%%
%%%%%%%%%

\section{Introduction}
\subsection{Regularity via minors}
%\Asaf{In section 4 however, I should probably rewirte $GL$ between manifolds in the proof of proposition 4.5...understand the differential of a Sobolev function }
%\Asaf{Should I unify all the open questions of the different sections?}
%\begin{itemize}

%\item I think it is better to switch places between the section on odd $k$, and the section on the conformal application. So, the reader will get to the application rather quickly, and not get tired after a long journey into the fine details of the odd $k$ case

%\item Phrase explicitly what weak version of the conformal regularity theorem, my minors theorem enables to prove. At the current stage of things, I need to assume $\det df \ge c $ a.e. for some $c>0$. This is in order to ensure $\det df >0$ everywhere, or equivalently $\extp^k df \in \GL(\extp^k \R^d)$.
%\end{itemize}
Let $d>2$, and let $\Omega$ be an open subset of $\R^d$. Let $2 \le k \le d-1$ be a fixed integer  and let $p \ge 1$. Consider the following question:
%This paper is concerned with (variants of) the following question:
\begin{quote}
\emph{
Let $f \in W^{1,p}(\Omega,\mathbb{R}^d)$ satisfy either $\det df>0$ a.e.~ or $\det df<0$ a.e. Suppose that $\extp^k df  \in \End(\extp^k \R^d)$ is smooth. Is $f$ smooth?
}
\end{quote}
%\begin{quote}
%\emph{
%Let $f \in W^{1,p}(\Omega,\mathbb{R}^d)$ and suppose that either $\det df>0$ a.e. or $\det df<0$ a.e. and that $\extp^k df \in \GL(\extp^k \R^d)$ is smooth. Is $f$ smooth?
%}
%\end{quote}

%$$\extp^k df,\extp^k df$$

Here $\extp^k \R^d$ is the $k$-th exterior algebra of $\R^d$, and for a linear map $A:\R^d \to \R^d$, $\extp^k A$ is its $k$-th exterior power, i.e. the endomorphism of $\extp^k \R^d$ defined via the formula
\[
v_1 \wedge \dots \wedge v_k \to Av_1 \wedge \dots \wedge Av_k.
\]
$\extp^k A$ encodes all the $k$-minors of the map $A$. For $f \in W^{1,p}(\Omega,\mathbb{R}^d)$, $\extp^k df$ is a map $\Omega \to \End(\extp^k \R^d)$, which is defined a.e. on $\Omega$. Our assumption is that this map has a smooth representative. (Equivalently, the map $\Omega \to \R^{\binom{d}{k}^2 }$ which takes $x$ to all the $k$-minors of $df_x$ has a smooth representative).

The motivation for studying this question is the following: There are situations in which one has accessibility  only to information about the regularity of the $k$-minors of the differential of a Sobolev map, for some specific value of $k$. It is then natural to examine what can be said about maps having regular minors. In particular, this question arises in the context of regularity of weakly conformal maps. We give a partial answer to the question, namely:
\begin{theorem}[Regularity via minors-invertible case]
\label{thm:regularity_via_minors_unique_case}
Let $d>2$ and let $\Omega \subseteq \R^d$ be open. Let $2 \le k \le d-1,p \ge 1$.
Let $f \in W^{1,p}(\Omega,\mathbb{R}^d)$ and suppose that either $\det df>0$ a.e. or $\det df<0$ a.e. and that $\extp^k df \in \GL(\extp^k \R^d)$ is smooth. 
If $k,d$ are not both even, $f$ is smooth. 
If $k$ is odd, the assumption on $\text{sgn} (\det df)$ can be omitted. %(but we still need $\extp^k df \in \GL(\extp^k \R^d)$, which is morally equivalent to $\det df \neq 0$. The latter requirement has only meaning almost everywhere.) 
%\Raz{(It is not clear here whether these are obvious statements, or something addition you need to prove.)}
\end{theorem}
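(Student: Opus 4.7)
The plan is to reconstruct $df$ from $g := \extp^k df$ by locally inverting the map $A \mapsto \extp^k A$. The key algebraic input is the kernel of the Lie group homomorphism $\extp^k : \GL(\R^d) \to \GL(\extp^k \R^d)$. I would argue that if $\extp^k C = I$, then $Cv_1 \wedge \cdots \wedge Cv_k = v_1 \wedge \cdots \wedge v_k$ for all $v_i$, so $C$ preserves every $k$-dimensional subspace of $\R^d$. Since $2 \le k \le d-1$, every line through the origin is the intersection of the $k$-subspaces containing it, so $C$ preserves every line, hence $C = \lambda I$ with $\lambda^k = 1$. Over $\R$ this forces $C = I$ when $k$ is odd and $C = \pm I$ when $k$ is even.

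Because the kernel is discrete, $\extp^k$ is an immersion, so near any $A_0 \in \GL(\R^d)$ it restricts to a diffeomorphism onto an embedded submanifold of $\GL(\extp^k \R^d)$. I would pick a Lebesgue point $x_0$ at which $df(x_0)$ exists with $\det df(x_0)\ne 0$, set $A_0 := df(x_0)$, and observe that $g$ takes values in $\image(\extp^k)$ almost everywhere, hence (by continuity of $g$ and local closedness of this image near $g(x_0)$) on a whole neighborhood $W$ of $x_0$. Composing $g|_W$ with the smooth local inverse of $\extp^k$ near $A_0$ then produces a smooth map $A : W \to \GL(\R^d)$ with $\extp^k A = g$ on $W$.

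The final step is to identify $A$ with $df$. Both maps lift $g$, so by the kernel computation $df(x) = \epsilon(x) A(x)$ a.e.\ on $W$, with $\epsilon(x) = 1$ if $k$ is odd, and $\epsilon(x) \in \{\pm 1\}$ if $k$ is even. In the odd $k$ case this settles everything without needing the sign hypothesis, matching the remark in the theorem. In the remaining case $k$ even and $d$ odd, I would exploit the identity $\det df(x) = \epsilon(x)^d \det A(x) = \epsilon(x)\,\det A(x)$: the sign of $\det A$ is locally constant as $A$ is smooth and invertible, and the sign of $\det df$ is a.e.\ constant by hypothesis, forcing $\epsilon$ to be a.e.\ constant on $W$. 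Thus $df$ admits a smooth representative on each such $W$; these agree a.e.\ on overlaps and therefore pointwise by continuity, so $df$ is globally smooth on $\Omega$, from which smoothness of $f$ follows by integrating the (necessarily curl-free) smooth field. The main obstacle is the algebraic kernel step, which also pinpoints the parity restriction: when both $k$ and $d$ are even, $A$ and $-A$ yield the same $\extp^k$ \emph{and} the same $\det$, so no hypothesis of the form given can separate them, and this local-lifting strategy inevitably breaks down.
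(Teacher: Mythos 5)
Your overall strategy is the same as the paper's: reconstruct $df$ by inverting $A \mapsto \extp^k A$, using the kernel computation to control the ambiguity and the parity hypothesis to resolve the sign. The kernel step and the sign bookkeeping (using $\det df = \epsilon^d \det A$ for $k$ even, $d$ odd) are correct and clean. But there is a genuine gap in the middle.

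The crux is your parenthetical ``by continuity of $g$ and local closedness of this image near $g(x_0)$.'' An injective immersion has an image that is an \emph{immersed} submanifold, and such images are in general not locally closed (think of a line of irrational slope on a torus). The immersion property only tells you that some small $U \ni A_0$ maps diffeomorphically onto $\psi(U)$, and that $\psi(U)$ is an embedded submanifold of some neighborhood $V$ of $g(x_0)$. It does \emph{not} tell you that $\image(\psi) \cap V = \psi(U)$: there may be $A$ far from $A_0$ with $\extp^k A \in V$, and worse, there may be sequences $\extp^k A_n \to g(x_0)$ with $A_n$ escaping every compact set. If either of these happened, knowing that $g(x) \in \image(\psi)$ merely a.e.\ on $W$ would not let you compose $g|_W$ with the local inverse of $\psi$ near $A_0$, because $g(x)$ could land in $\image(\psi) \setminus \psi(U)$ or in $\overline{\image(\psi)} \setminus \image(\psi)$. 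Ruling this out is precisely the content of the paper's properness lemma (Lemma~\ref{lem:exterior_map_is_proper}) and its consequence that $\psi$ restricted to $\GL^+(\R^d)$ is a closed embedding with closed image $H$ (Corollary~\ref{cor:image_exterior_map_is_closed_embedded}). Without some argument for properness --- in the paper this is a concrete SVD estimate showing that convergence of all $k$-fold products of singular values forces convergence of the singular values themselves, using $r \le d-1$ --- the local-inversion step is unjustified.

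A secondary issue, which disappears once properness is in hand: your gluing produces a smooth representative of $df$ only on the union of the neighborhoods $W$ of Lebesgue points. That union is open and dense but need not be all of $\Omega$. The paper closes this by observing that $H$ is closed, so $\phi^{-1}(H)$ is closed \emph{and} dense in $\Omega$, hence equal to $\Omega$; then the codomain restriction to the embedded submanifold $H$ keeps $\phi$ smooth (Corollary 5.30 in \cite{Lee13}), and $\psi^{-1} \circ \tilde\phi$ is a smooth global representative of $df$. In your setup you would need to argue that $g(y) \in \image(\psi)$ for \emph{every} $y \in \Omega$, not just Lebesgue points, which again comes down to closedness of the image.
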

\begin{comment}
The assumption $\extp^k df \in \GL(\extp^k \R^d)$ implies that $df \in  \GL( \R^d)$ a.e., so we always have $\det df \neq 0$ a.e., even when we do not assume anything on $\text{sgn} (\det df)$. The requirement  $\det df>0$ a.e. (or its weak counterpart $\det df \ge 0$ a.e.) is very common in the regularity theory of mappings (quasiregular, quasiconformal, of bounded distortion, etc.). Without it, many of the results are no longer valid.
\end{comment}

%We also establish some partial results in the case where $k,d$ are both even.

To learn something on a map from its $k$-minors, some non-degeneracy conditions must be assumed; indeed, if $f$ is a Sobolev map with $\rank (df)<k$ a.e. then the $k$-minors vanish identically, hence do not provide any information on $f$. Throughout the paper, we will examine  conditions that enable deducing properties of a map from properties of its minors. It turns out that for the sake of applications, the invertibility assumption $\extp^k df \in \GL(\extp^k \R^d)$  is too restrictive; after omitting it, we get
the following corollary of Theorem \ref{thm:regularity_via_minors_unique_case}:

\begin{theorem}[Regularity via minors]
\label{thm:regularity_via_minors_unique_case_a.e._assumptions}
Let $d>2$ and let $\Omega \subseteq \R^d$ be open. Let $2 \le k \le d-1,p \ge 1$. Let $f \in W^{1,p}(\Omega,\mathbb{R}^d)$ and suppose that either $\det df>0$ a.e. or $\det df<0$ a.e. and that $\extp^k df \in \End(\extp^k \R^d)$ is smooth. 
Then, if $k,d$ are not both even, $f$ is smooth on an open subset of full measure in $\Omega$. 
If $k$ is odd, it suffices to assume $\det df \neq 0$ a.e.. %the assumption on $\text{sgn} (\det df)$ can be omitted.
\end{theorem}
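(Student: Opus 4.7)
The plan is to deduce Theorem~\ref{thm:regularity_via_minors_unique_case_a.e._assumptions} from Theorem~\ref{thm:regularity_via_minors_unique_case} by localizing to the open set where $\extp^k df$ is invertible. Writing $\extp^k df$ for the smooth representative, I would define
\[
U := \{ x \in \Omega : \extp^k df(x) \in \GL(\extp^k \R^d) \},
\]
which is open in $\Omega$ as the continuous preimage of the open set $\GL(\extp^k \R^d) \subset \End(\extp^k \R^d)$.

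The next step is to verify that $\Omega \setminus U$ has Lebesgue measure zero. The relevant linear-algebra fact is that $\extp^k A$ is invertible iff $A$ itself is invertible, so that $\Omega \setminus U$ is exactly the zero set of the smooth function $\det(\extp^k df)$. Under each of the hypotheses of Theorem~\ref{thm:regularity_via_minors_unique_case_a.e._assumptions} one has $\det df \neq 0$ a.e., hence the pointwise a.e.-defined $\extp^k df$ lies in $\GL(\extp^k \R^d)$ almost everywhere. Since this pointwise version agrees a.e.~with the smooth representative, the smooth function $\det(\extp^k df)$ is nonzero a.e., and therefore $|\Omega \setminus U| = 0$.

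Finally, I would apply Theorem~\ref{thm:regularity_via_minors_unique_case} locally: for any $x_0 \in U$ choose a bounded open neighborhood $V \subset U$ of $x_0$. Then $f|_V \in W^{1,p}(V,\R^d)$, the sign condition (or mere non-vanishing of $\det df$ when $k$ is odd) is inherited from $\Omega$, and $\extp^k df|_V$ is smooth with values in $\GL(\extp^k \R^d)$. Theorem~\ref{thm:regularity_via_minors_unique_case} then gives smoothness of $f$ on $V$, and since $x_0 \in U$ was arbitrary, $f$ is smooth on all of $U$. I do not anticipate any substantive obstacle beyond Theorem~\ref{thm:regularity_via_minors_unique_case} itself; the only subtle point is to interpret ``$\extp^k df$ smooth'' as the existence of a smooth representative, and to keep it distinct from the a.e.-defined exterior power of the weak derivative when comparing invertibility on a set of full measure.
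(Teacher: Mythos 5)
Your proposal is correct and follows essentially the same route as the paper's own argument: define $\Omega_0 = \{x : \extp^k df_x \in \GL(\extp^k\R^d)\}$ (your $U$), note it is open as the preimage of an open set under the continuous smooth representative, conclude it has full measure because $\extp^k A$ is invertible iff $A$ is, and apply Theorem~\ref{thm:regularity_via_minors_unique_case} on this set. The point you flag about distinguishing the smooth representative from the a.e.-defined exterior power of the weak derivative is indeed the only delicate point, and you handle it exactly as the paper implicitly does.
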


This version follows immediately from Theorem \ref{thm:regularity_via_minors_unique_case}: Indeed, for every linear map  $A:\R^d \to \R^d$, $A$ is invertible if and only if $\extp^k A$ is invertible. (this follows from the fact that the map $A \to \extp^k A$ is multiplicative, i.e. $\extp^k A \circ B=\extp^k A \circ \extp^k B$, so $(\extp^k A)^{-1}=\extp^k A^{-1}$).
%\End(\extp^k \R^d)

Define $\Omega_0=\{ x \in \Omega \, | \, \bigwedge^k df_x \in \text{GL}(\bigwedge^{k}\mathbb{R}^d) \}$.  
$\Omega_0$ is open, as it is an inverse image of the open subset $\text{GL}(\bigwedge^{k}\mathbb{R}^d) \subseteq \text{End}(\bigwedge^{k}\mathbb{R}^d)$ via the continuous map $x \to  \bigwedge^k df_x$. Since $df_x$ is invertible a.e.   $\bigwedge^k df_x$ is invertible a.e., so $\Omega_0$ has full measure in $\Omega$. Now apply Theorem \ref{thm:regularity_via_minors_unique_case} to $f|_{\Omega_0}$. Other relaxations of the rank assumptions will be studied in \secref{sec:sec_odd_k}.

\paragraph{A proof sketch of Theorem \ref{thm:regularity_via_minors_unique_case} }

Under the given assumptions, the $k$-minors of $df$ uniquely determine $df$ in a smooth way. In other words, the map
\[
\psi:A   \to \extp^k A
\] 
is an embedding. Composing the smooth map $x \to \extp^k df_x$ with $\psi^{-1}$ establishes the smoothness of $f$. In more detail: When $k,d$ are not both even, $\extp^k df$ uniquely determines $df$, since $\sgn(\det df)$ is constant. Indeed, if $A,B \in \GL^+(\mathbb{R}^d)$ or $A,B \in \GL^-(\mathbb{R}^d)$ and $\extp^k A=\extp^k B$, then $A=B$: To see this write $S=AB^{-1}$. Then $\extp^k S=\text{Id}_{\extp^k \mathbb{R}^d}$ which implies $S=\pm \id$. The assumptions on $k,d$ force $S=\id$.  The delicate part is to understand in what sense $\psi: A \mapsto \extp^k A$ is a diffeomorphism; considering $\psi$ as a map $\GL^+(\R^d) \to \GL(\extp^k \R^d)$, we shall show it is a smooth embedding, so $\psi:\GL^+(\R^d) \to \text{Image} (\psi)$ is a diffeomorphism. 

\subsection{Regularity of weakly conformal maps (Euclidean case)}
\label{sec:intro_Regularity_of_weakly_conformal_maps}
The well-known Liouville theorem states the following:
\begin{quote}
\emph{
Let $f \in W_{loc}^{1,d}(\Omega,\R^d)$ be a weakly-conformal map, which is either orientation-preserving or orientation-reversing almost everywhere, that is
\[
df^T df=|\det df|^{2/d} \id \, \text{ a.e. }
\]
and
\[
\det df \ge 0 \, \, \text{ a.e.} \, \, \text{  or } \,   \det df \le 0 \, \, \text{ a.e. }
\]
Then $f$ is either constant or a restriction to $\Omega$ of a M\"obius transformation of $\mathbb{R}^d$. In particular, $f$ is smooth.
}
\end{quote}
This theorem has a long history, and was originally proved under much stronger regularity assumptions; There are various proofs, none of them trivial; see e.g. \cite{geh62rings,BI82,Res67b}. For a more thorough survey, and for a presentation of various proofs under stronger regularity assumptions, see \cite{IM02}.

In the seminal paper \cite{Iwa93}, Iwaniec and Martin proved a stronger version of Liouville's theorem under weaker regularity assumptions. They showed that when $d>2$ is even, $W^{1,d/2}$-conformal maps are smooth, and  for every $1<p<d/2$, there exist weakly conformal $W_{loc}^{1,p}$ maps that are not smooth. ($p=d/2$ is the critical exponent for this problem.)

The authors in \cite{Iwa93} proved this stronger version by \emph{reducing} it to the $W^{1,d}$ case, which was already known. Their proof proceeds along the following lines:
They show that every differential form $df^{i_1} \wedge df^{i_2} \wedge \dots \wedge df^{i_{d/2}}$, where $1 \le i_1,\dots,i_{d/2} \le d$, is weakly harmonic, hence smooth. This immediately implies $\det df \in C^{\infty}$, hence (using conformality) $f \in W^{1,d}$. Note that the forms $df^{i_1} \wedge df^{i_2} \wedge \dots \wedge df^{i_{d/2}}$ encode the $d/2$-minors of $df$.

By reverting to the  $W^{1,d}$ version of Liouville's theorem, the authors in \cite{Iwa93} give up a lot of information: 
After establishing the smoothness of all the $d/2$-minors (which constitute $\binom d {d/2}$ polynomial combinations of derivatives of $f$), they only use one specific corollary---that the Jacobian of $f$ is smooth. By doing this, they recover a non-trivial problem, whose proof is not simple. This paper proposes an alternative approach for completing the proof whereby every $W^{1,d/2}$ conformal map is smooth, relying directly on the smoothness of the $d/2$-minors. 
%for finishing the proof, without reverting to the $W^{1,d}$ theorem; that is use the smoothness of minors directly, in order to conclude $f$ is smooth.

Theorem \ref{thm:regularity_via_minors_unique_case_a.e._assumptions} implies that when $d$ is not a multiple of $4$, every $W^{1,d/2}$ conformal map $f$ is smooth on an open subset of full measure, if we require the slightly stronger assumption $\det df>0$ (compared to $\det df \ge 0$).  A further easy application of the Liouville's theorem, which provides an \emph{explicit formula} for smooth conformal maps, allows us to deduce smoothness on the entire domain.

%Theorem \ref{thm:regularity_via_minors_unique_case_a.e._assumptions} implies the smoothness of a $W^{1,d/2}$ conformal map on an open subset of full measure, where $d$ is not a multiple of $4$ under the slightly stronger assumption $\det df>0$. (compared to $\det df \ge 0$).  A further easy application of the Liouville's theorem, which provides an \emph{explicit formula} for smooth conformal maps, allows us to deduce smoothness on the entire domain.

The reduction made in \cite{Iwa93} is  very elegant, and quite elementary; it exploits the  \emph{conformal symmetry} of the Hodge dual operator in even dimensions, and only uses  standard \emph{linear} elliptic regularity results. The strategy is to replace the non-linear first order equation of conformality, with second-order equations which are "linear in the minors of $df$". By combining this reduction with Theorem \ref{thm:regularity_via_minors_unique_case_a.e._assumptions}, we offer a self-contained regularity proof, which we hope would be more accessible to non-experts. Even though the paper \cite{Iwa93} is not short (53 pages), the part concerning the regularity result is very short. To help bring this part into light, we reproduce in this paper its main argument.

\subsection{Regularity of weakly conformal maps (Riemannian case)}
\label{subsec:intro_conf_reg_Riemm}

The regularity of weakly conformal maps has also been studied extensively in the context of maps between Riemannian manifolds. The first result seems to be \cite{lelong1976geometrical}, where it is proven that any conformal homeomorphism between two $C^{\infty}$ Riemannian manifolds is $C^{\infty}$. In \cite{She82}, \cite{res78dif}, \cite{iwa82regthes}
it is proved that
a $W^{1,d}_{loc}$ \emph{continuous} conformal map between two manifolds with $C^r$  metric tensors is  $C^{r+1}$. This was recently re-proved (using different methods) in \cite{LS14}. %All these results assume that the Jacobian has constant sign a.e. 

%Moreover, In \cite{She82} the following is proved: Any $C^{1,1}$  conformal map between Riemannian manifolds with $C^{l,\al}$ Riemannian metrics is $C^{l+1,\al}$, where the definition of "a conformal map" is the same one used in \cite{res78dif}: A quasiconformal map is defined to be a \emph{continuous} map which belongs in $W^{1,d}$. (and satisfies some additional properties).

%Finally, in \cite{LS14} the following is (re)-proved using different methods from the previous works:
%
%Any weakly conformal mapping, in $W_{\text{loc}}^{1,d}$ between manifolds with $C^r$ metric tensors is in $C^{r+1}$. %Here "1-quasiconformal" means a \emph{conformal homeomorphism}.

%Any bi-Lipschitz conformal mapping, or more generally any 1-quasiconformal mapping with $W_{\text{loc}}^{1,d}$ regularity, between two Riemannian manifolds with $C^r$ ($r>1$) metric tensors is in fact a $C^{r+1}$ conformal diffeomorphism. Here "1-quasiconformal" means a \emph{conformal homeomorphism}.

%To summarize, in the context of mappings between \emph{arbitrary} Riemannian manifolds, it seems the best results are that $W^{1,d}$ weakly conformal maps are smooth, if one assumes they are \emph{continuous}.

It is worth noting that even though all the previous works explicitly assumed continuity in advance, it has been shown recently that this additional assumption (which wasn't made in the Euclidean cases) is unnecessary when assuming $\det df>0$ a.e. (and when the target manifold is compact): In \cite{goldstein2017finite} it is proven that any $W^{1,d}(\M,\N)$ map of finite distortion is 
continuous, if $\N$ is compact, (A map of finite distortion is a map satisfying $\det df > 0$ or $df=0$ a.e.) 

We improve the known results by proving the regularity of continuous, $W^{1,d/2}$ conformal maps between $d$-dimensional manifolds, for even $d$. More precisely, we prove  the following theorem:

%Our main contribution to the results in the Riemannian setting is

%\Asaf{Add definition of weakly conformal here...redundancy from the Euclidean presentation? shorten the $C^{k,\alpha}$ discussion, state precisely that I assume the metrics here are smooth, for simplicity}

%\Asaf{There is also a redundancy with \defref{def:weakly_conformal}. Say in what sense $df$ is a map $TM \to TN$ , cite van-shcaftingan...}

\begin{theorem}
\label{thm:reg_conf_Riemann}
Let $(\M,\g),(\N,\h)$ be smooth oriented $d$-dimensional Riemannian manifolds ($d$ even).
Let $f \in W^{1,\frac{d}{2}}(\M,\N)$ be a weakly conformal \emph{continuous} map. Then $f$ is smooth.
\end{theorem}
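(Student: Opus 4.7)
The plan is to reduce the theorem to the Euclidean regularity-via-minors result, \thmref{thm:regularity_via_minors_unique_case_a.e._assumptions}, via local coordinates, after first upgrading the middle-dimensional minors of $df$ to smooth functions by adapting the Iwaniec--Martin Hodge-theoretic reduction of \cite{Iwa93}. Continuity is used essentially: it is what lets us express $f$ as a map between Euclidean coordinate balls.

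Fix $p \in \M$ and pick charts $(U,\varphi)$ around $p$ and $(V,\psi)$ around $f(p)$ with $f(U) \subseteq V$; the existence of such a pair relies on continuity. Setting $\tilde f = \psi \circ f \circ \varphi^{-1}$, the conformality condition becomes
\[
d\tilde f^T \, h(\tilde f) \, d\tilde f \;=\; \lambda(x) \, g(x) \quad \text{a.e.,}
\]
with $g,h$ smooth positive-definite matrix-valued fields (the local metric components) and $\lambda > 0$. Taking determinants gives $\det d\tilde f \neq 0$ a.e., and orientability together with continuity make its sign constant on $\varphi(U)$.

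Next, I would show that every $d/2$-minor of $d\tilde f$ is smooth on $\varphi(U)$. Under a conformal rescaling $g \mapsto \lambda g$, the Hodge star on $k$-forms transforms as $*_{\lambda g} = \lambda^{d/2-k} *_g$, which is trivial precisely when $k = d/2$. Consequently, for any smooth closed and co-closed $d/2$-form $\omega$ on $V$, the distributional pullback $\tilde f^* \omega$ (well-defined since $|d\tilde f|^{d/2} \in L^1_{\mathrm{loc}}$) is closed, and
\[
*_g(\tilde f^* \omega) \;=\; *_{\tilde f^* h}(\tilde f^* \omega) \;=\; \tilde f^*(*_h \omega),
\]
so co-closedness of $\omega$ with respect to $h$ transfers to co-closedness of $\tilde f^* \omega$ with respect to $g$. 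Thus $\tilde f^* \omega$ is a weakly harmonic $d/2$-form for the Hodge--de Rham Laplacian with smooth coefficients, hence smooth by classical elliptic regularity. Local Hodge theory produces, on a small enough neighborhood of $f(p)$, a collection of smooth closed and co-closed $d/2$-forms $\{\omega_\alpha\}$ spanning $\extp^{d/2} T_y^* \N$ at every point; expressing each $\tilde f^* \omega_\alpha$ in the basis $\{d\tilde f^{i_1} \wedge \cdots \wedge d\tilde f^{i_{d/2}}\}$ yields a pointwise invertible linear system whose inversion recovers every $d/2$-minor of $d\tilde f$ as a smooth function.

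With all $d/2$-minors smooth, when $d/2$ is odd (i.e.\ $d \equiv 2 \pmod{4}$), \thmref{thm:regularity_via_minors_unique_case_a.e._assumptions} applied with $k = d/2$ yields smoothness of $\tilde f$ on an open set of full measure in $\varphi(U)$, and the explicit structure of smooth conformal maps combined with continuity then propagates smoothness to all of $\varphi(U)$. The main obstacle is the case $d \equiv 0 \pmod{4}$, where \thmref{thm:regularity_via_minors_unique_case_a.e._assumptions} does not apply because $k = d/2$ is also even. The proposed workaround is to notice that $\det d\tilde f$, being a polynomial in the $d/2$-minors via the Laplace expansion, is smooth; then $\lambda$ is smooth by the conformal equation, tracing which shows $|d\tilde f|$ is locally bounded, so $\tilde f \in W^{1,\infty}_{\mathrm{loc}}$. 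At that point the classical Liouville regularity theorem for continuous $W^{1,d}$ conformal maps between Riemannian manifolds (\cite{She82,res78dif,iwa82regthes,LS14}) concludes the argument.
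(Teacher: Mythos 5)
Your overall reduction (continuity $\Rightarrow$ local charts $\Rightarrow$ Hodge-theoretic minors $\Rightarrow$ regularity-via-minors or reduction to $W^{1,d}$) is the right idea and matches the paper's strategy, but there is a genuine gap at the heart of the argument: the claim that the $d/2$-minors of $d\tilde f$ come out \emph{smooth} after the inversion is false. When you write $\tilde f^*\omega_\alpha$ in the ``basis'' of $d\tilde f^{I}$ and invert, the coefficient matrix is $B(y) = \bigl((\omega_\alpha)_I(y)\bigr)^{-1}$, which is smooth in $y$, but you are evaluating it at $y = \tilde f(x)$. So you get $d\tilde f^I = \sum_\alpha (B^I_\alpha\circ\tilde f)\,\tilde f^*\omega_\alpha$, and the factor $B^I_\alpha\circ\tilde f$ is only as regular as $\tilde f$, i.e.\ only continuous \emph{a priori}. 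A single inversion therefore yields continuity of the minors, not smoothness. (This is precisely what the paper's Lemma~\ref{lem:cont_minors_Riem} proves, and why its conclusion is stated as continuity.) This breaks your $d\equiv 2\pmod 4$ branch, because Theorem~\ref{thm:regularity_via_minors_unique_case_a.e._assumptions} requires the minors to be \emph{smooth}, which you do not have.

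The good news is that your ``$d\equiv 0\pmod 4$ workaround'' actually works for \emph{all} even $d$ with only continuous minors: continuous minors $\Rightarrow$ $\det d\tilde f$ continuous $\Rightarrow$ by conformality $|d\tilde f|^d$ continuous, hence locally bounded $\Rightarrow$ $\tilde f\in W^{1,d}_{\rm loc}$, and then the known continuous-$W^{1,d}$ Riemannian Liouville theorem finishes. That is essentially the paper's own proof of Theorem~\ref{thm:reg_conf_Riemann}; you should drop the appeal to Theorem~\ref{thm:regularity_via_minors_unique_case_a.e._assumptions} and apply this reduction uniformly. If you do want to avoid invoking the $W^{1,d}$ result, the way to salvage the minors route is the bootstrap in Proposition~\ref{prop:Riem_reg_harmonic_morphism_smooth}: $f\in C^m\Rightarrow B^I_\alpha\circ f\in C^m\Rightarrow d f^I\in C^m\Rightarrow\extp^{k}df\in C^m\Rightarrow df=\psi^{-1}\bigl(\extp^{k}df\bigr)\in C^m\Rightarrow f\in C^{m+1}$, iterated from $m=0$; a single inversion does not suffice. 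Finally, a smaller but real point: the existence of a local frame of closed and co-closed $k$-forms is \emph{not} a routine consequence of local Hodge theory---locally, the space of closed and co-closed forms is infinite-dimensional, and a generic metric admits no coordinates with $dy^1\wedge\cdots\wedge dy^k$ co-closed. The paper proves the frame existence in Appendix~\ref{section:Local_existence_closed_co-closed_frames} by a compactness/perturbation argument on the torus; you should either cite that theorem or be upfront that this is a nontrivial input.
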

For simplicity, we assumed here that the metrics on $\M,\N$ are $C^{\infty} $. We use here the following definition of weakly conformal maps between manifolds:
\begin{definition}
\label{def:weakly_conformal}
We say that $f \in W^{1,s}(\M,\N)$ is weakly conformal, if
\[
df^T df=|\det df|^{2/d} \id_{T\M} \, \text{ a.e. }
\]
and
\[
\det df \ge 0 \, \, \text{ a.e.} \, \, \text{  or } \,   \det df \le 0 \, \, \text{ a.e. }
\]

\begin{comment}
We view $df$ as a map $T\M \to T\N$; for details, see \cite{CV16}. 
\end{comment}

%\begin{comment}
%The first equation states that $f$ is conformal. The second requirement means the Jacobian does not change sign.
%\end{comment}
\end{definition}

To the best of our knowledge, this is the first regularity result regarding $W^{1,p}$ weakly conformal maps between manifolds, when $p<d$.

We use the continuity assumption twice; one place is where we pass from the smooth Theorem \ref{thm:conformal_pull_back_pres} to its weak version Theorem  \ref{cor:closed_and_co-closed_pullbacks_preserve}. The derivation of the weak version from the strong one is implicitly based on the fact that $f$ can be approximated \emph{uniformly} in the Sobolev sense via smooth maps.

%\Asaf{Say what is the second place where we use continuity. Also, the previous paragraph should probably be in the intro to the relevant subsection.}
%However, even if we assumed $f$ was approximable  by smooth maps (this always happens if one is only interested in the \emph{local} problem, where $\N=\R^d$ with some metric), then shall still need continuity at a later stage of the argument. 
%\Asaf{Do I really want the next paragraph here? Maybe this gets the intro too long. Raz said yes.}
To understand the  main difference between the Riemannian case and the Euclidean case we briefly describe the key idea behind the Euclidean proof:
\begin{quote}
A weakly conformal map pulls back closed and co-closed forms to weakly  closed and co-closed forms, which are known to be smooth (elliptic regularity). 
\end{quote}
When the target space is Euclidean, i.e. $\N=\R^d$ ($d=2k$), one can choose $\omega=dy^{i_1} \wedge dy^{i_2} \wedge \dots \wedge dy^{i_k}$, where $y^i$ are the standard coordinates on $\R^{2k}$. Clearly, $\omega$ is closed and co-closed, hence $f^*\omega=df^{i_1} \wedge df^{i_2} \wedge \dots \wedge df^{i_k}$ is smooth.
Thus, the $d/2$-minors of $f$ are smooth. When trying to adapt this argument to an arbitrary Riemannian target space, we hit the following problem: A generic Riemannian metric does not admit "higher-order" harmonic coordinates, i.e. there are no local  coordinates $y^i$ on $(\N,\h)$ such that $dy^{1} \wedge dy^{2} \wedge \dots \wedge dy^{k}$ is co-closed. For a proof see \cite{Bryantobstrucharmcoord}.  However, it turns out that all metrics have a weaker property which we shall exploit: the existence of local frames for $\bigwedge^k T^*\N$, whose elements are closed and co-closed forms, for any $1 \le k < d$.
%Indeed, the main difference between the Riemannian case and the Euclidean case is that we do 
%not have "higer-order" harmonic coordinates, that is

%\Raz{(Again, too meta-pedagogic in my opinion; I suggest to say what you did without the "I decided" parts.)}

%In the seminal paper \cite{Iwa93}, the authors prove that $W^{1,d/2}$-conformal maps are smooth.
%The beautiful proof is essentially a \emph{reduction} to the case $W^{1,d}$, which the authors take for granted. Even though the paper \cite{Iwa93} is not short (53 pages), the part concerning the regularity result is in fact very short. To help bring this theorem to light, I have decided to reproduce here the main argument, and point where my result comes in, in hope this would help future readers.
%ir beautiful proof is essentially a \emph{reduction} to the case $W^{1,d}$, which the authors take for granted. Even though the paper \cite{Iwa93} is not short (53 pages), the part concerning the regularity result is in fact very short. To help bring this theorem to light, I have decided to reproduce here the main argument, and point where my result comes in, in hope this would help future readers.

\paragraph{Structure of this paper}
In \secref{sec:sec_main} we prove Theorem  \ref{thm:regularity_via_minors_unique_case}.  In \secref{sec:conformal_reg} we present the reduction of the $W^{1,d/2}$ regularity problem of conformal maps to the $W^{1,d}$ case following \cite{Iwa93}, and explain where Theorem~\ref{thm:regularity_via_minors_unique_case} can be used in order to "skip" the need to prove the  $W^{1,d}$-case separately. In \secref{sec:Regularit_conformalRiemannianmani} we prove the regularity of conformal maps between Riemannian manifolds (Theorem \ref{thm:reg_conf_Riemann} ). In \secref{sec:sec_odd_k} we generalize Theorem \ref{thm:regularity_via_minors_unique_case}, when $k$ is odd; we show that it is possible to smoothly reconstruct $df$ from its $k$-minors, under relaxed rank conditions.%invertibility assumption on $df$ can be relaxed. 

In \secref{sec:sec_discuss} we present some open questions, which arise naturally from this work. 
In particular, we discuss the necessity of the assumption $\bigwedge^k df \in \GL(\bigwedge^k \R^d)$ and the case where $k,d$ are both even, among some other points. In Appendix \ref{section:Local_existence_closed_co-closed_frames} we prove the local existence of closed and co-closed frames of differential forms for arbitrary metrics.
%Finally, in appendix \ref{sec:appendix2} we prove a lemma which is required in the proof of Theorem~\ref{thm:regularity_via_minors_unique_case}--- if every $k$-dimensional subspace is invariant under a given linear map $S$, then $S$ is a multiple of the identity map. We also provide a proof that the "minors map" $A \to \bigwedge^k A$ is an immersion, under some natural rank conditions. This is used in \secref{sec:sec_odd_k}.
%aaa
%%%%%%%%%%%%%%%%%%%%%%%%%%%%%%%%%%%
\section{Proof of regularity via minors (Theorem  \ref{thm:regularity_via_minors_unique_case})}
\label{sec:sec_main}

In this section we prove Theorem \ref{thm:regularity_via_minors_unique_case}.  We begin with a few lemmas.

%\begin{lemma}
%\label{lem:almost_injectivity_of_exterior_map_invertible_case}
%Let $1 \le k \le d-1$ be an integer.  If $A,B \in \GL(\mathbb{R}^d)$  and $\extp^k A=\extp^k B$, then $A=\pm B$.
%\end{lemma}

\begin{lemma}
\label{lem:almost_injectivity_of_exterior_map_invertible_case}
Let $V,W$ be $d$-dimensional real vector spaces, and let $1 \le k \le d-1$ be an integer.  If $A,B \in \GL(V,W)$  and $\extp^k A=\extp^k B$, then $A=\pm B$.
\end{lemma}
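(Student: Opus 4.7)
The plan is to reduce the assertion to the following special case: if $T \in \GL(V)$ satisfies $\extp^k T = \id_{\extp^k V}$, then $T = \pm \id_V$. Granting this, I would set $T = A^{-1} \circ B \in \GL(V)$ and use the multiplicativity of the exterior power together with the hypothesis $\extp^k A = \extp^k B$ to compute
\[
\extp^k T \;=\; (\extp^k A)^{-1} \circ \extp^k B \;=\; (\extp^k B)^{-1} \circ \extp^k B \;=\; \id_{\extp^k V}.
\]
The special case would then give $T = \pm \id_V$, that is, $B = \pm A$, which is the desired conclusion.

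The heart of the argument is to prove the reduced statement. First I would observe that for any linearly independent $v_1, \ldots, v_k \in V$,
\[
Tv_1 \wedge \cdots \wedge Tv_k \;=\; v_1 \wedge \cdots \wedge v_k \;\neq\; 0.
\]
Since a nonzero simple $k$-vector recovers its spanning $k$-plane (e.g.\ as $\{u \in V : u \wedge v_1 \wedge \cdots \wedge v_k = 0\}$), this forces $T$ to preserve every $k$-dimensional subspace $U \subseteq V$. The main obstacle will be the next step, in which this must be upgraded to the statement that $T$ preserves every \emph{line}. For $v \in V \setminus \{0\}$, $Tv$ lies in every $k$-dimensional subspace containing $v$, so it suffices to verify
\[
\bigcap_{\substack{\dim U = k \\ v \in U}} U \;=\; \Span(v).
\]
This is exactly where the hypothesis $k \le d-1$ enters: given any $w \notin \Span(v)$, I extend $v$ to a basis $v, e_2, \ldots, e_d$ and, by a short case analysis on which coefficients of $w$ in this basis are nonzero, choose indices $j_1, \ldots, j_{k-1}$ so that the $k$-dimensional subspace $\Span(v, e_{j_1}, \ldots, e_{j_{k-1}})$ omits $w$. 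Such a choice is available precisely because $k + 1 \le d$ leaves at least one basis vector to be excluded.

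Once every line is $T$-invariant, applying $T$ to $v_1$, $v_2$, and $v_1 + v_2$ for any two independent vectors forces $T = \lambda \id_V$ for a single scalar $\lambda \in \R$. Substituting into $\extp^k T = \id$ then gives $\lambda^k = 1$, so $\lambda \in \{+1, -1\}$ and $T = \pm \id_V$. The entire argument thus rests on the small linear-algebraic lemma described above; it breaks down exactly when $k = d$ (in which case any invertible $T$ trivially preserves the unique $d$-dimensional subspace), which is the structural reason for the hypothesis $k \le d-1$.
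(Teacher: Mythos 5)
Your proposal is correct and follows essentially the same route as the paper: form $T = A^{-1}B$ (the paper uses $S = AB^{-1}$, which is immaterial), deduce that $\extp^k T = \id$ forces every $k$-dimensional subspace to be $T$-invariant, reduce to invariance of every line via the observation that $\bigcap_{\dim U = k,\, v\in U} U = \Span(v)$ (the paper isolates this as Lemma~\ref{lem:k_invariant-subspaces_identity}), conclude $T = \lambda\id$ from the standard three-vector argument, and finish with $\lambda^k = 1$. The only difference is that you inline the intersection lemma and spell out the coordinate case analysis, whereas the paper factors it into an appendix lemma.
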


\begin{proof}
Write $S=AB^{-1}$. Then, $\extp^k S=\text{Id}_{\extp^k W}$. This implies that every $k$-dimensional subspace of $W$ is $S$-invariant. Indeed, let $v_1,\dots,v_k \in W$ be linearly independent. Then $Sv_1 \wedge \dots \wedge Sv_k=v_1 \wedge \dots \wedge v_k$, which implies $\Span\{ v_1,\dots,v_k \}=\Span\{ Sv_1,\dots,Sv_k \}$. By \lemref{lem:k_invariant-subspaces_identity}, $S$ is a multiple of the identity, i.e. $S=\lambda \,\text{Id}$ for some $\lambda \in \R$. Then,
\[
\id_{\extp^k W}=\extp^k S=\lambda^k \id_{\extp^k W} \Rightarrow \lambda^k=1  \Rightarrow \lambda=\pm 1.
\]
\end{proof}

\begin{corollary}
\label{cor:injectivity_of_exterior_map}
Suppose $k,d$ are not both even. If $\extp^k A=\extp^k B$ for $A,B \in \GL^+(\mathbb{R}^d)$ or $A,B \in \GL^-(\mathbb{R}^d)$ , then $A=B$.
\end{corollary}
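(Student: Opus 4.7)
The plan is to invoke \lemref{lem:almost_injectivity_of_exterior_map_invertible_case} directly and then rule out the sign ambiguity using the hypothesis that $k,d$ are not both even. Concretely, from $\extp^k A = \extp^k B$ the lemma delivers $A = \pm B$, so the entire task reduces to showing that $A = -B$ is incompatible with $A,B$ lying in the same connected component of $\GL(\R^d)$ when parity obstructs it.

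To rule out $A=-B$, I would split into two cases according to which of $k,d$ is odd. If $d$ is odd, then $\det(-B) = (-1)^d \det B = -\det B$, so $A = -B$ would force $\det A$ and $\det B$ to have opposite signs, contradicting the assumption that both matrices lie in $\GL^+(\R^d)$ or both in $\GL^-(\R^d)$. If instead $k$ is odd, I would apply $\extp^k$ to the equation $A = -B$: this gives $\extp^k A = (-1)^k \extp^k B = -\extp^k B$, but by hypothesis $\extp^k A = \extp^k B$, so $\extp^k B = 0$, contradicting invertibility of $B$ (since $\extp^k B \in \GL(\extp^k \R^d)$ because $B$ is invertible and $\extp^k$ is multiplicative). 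Since ``$k,d$ not both even'' means at least one of these cases applies, we conclude $A = B$.

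There is essentially no obstacle here; the corollary is a clean bookkeeping consequence of \lemref{lem:almost_injectivity_of_exterior_map_invertible_case} combined with the elementary multiplicativity identity $\extp^k(-\id) = (-1)^k \id_{\extp^k \R^d}$ and the fact that $\det(-\id) = (-1)^d$. The only subtlety worth flagging explicitly is that the two cases ($d$ odd versus $k$ odd) truly cover the hypothesis, which is immediate from De Morgan applied to ``both even.'' I would therefore present the argument as a short two-line case analysis immediately after quoting the previous lemma.
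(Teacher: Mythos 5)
Your proof is correct and follows essentially the same route as the paper: invoke Lemma~\ref{lem:almost_injectivity_of_exterior_map_invertible_case} to get $A=\pm B$, then use parity of $k$ and $d$ to rule out the minus sign. The paper phrases the parity step as a single contradiction (assume $S=AB^{-1}=-\id$, deduce both $k$ and $d$ must be even), while you split into two cases according to which of $k,d$ is odd, but the underlying computations — $\extp^k(-\id)=(-1)^k\id$ and $\det(-\id)=(-1)^d$ — are identical.
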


\begin{proof}
Write $S=AB^{-1}$, hence $\extp^k S=\text{Id}_{\extp^k \mathbb{R}^d}$.
By the assumptions on $A,B$ it follows that $S\in \GL^+(\mathbb{R}^d)$.
By \lemref{lem:almost_injectivity_of_exterior_map_invertible_case},  $S=\pm \id$.
Assume by contradiction that $S=-\id$; $\extp^k S=\text{Id}_{\extp^k \mathbb{R}^d}$ implies that $k$ is even, whereas $S \in \GL^+(\mathbb{R}^d)$ implies that $d$ is even.
\end{proof}

We automatically obtain the following corollary:

\begin{corollary}
\label{cor:exterior_map_properties}
Define $\psi:\GL(\R^d) \to \GL(\extp^k \R^d)$ by $\psi( A)= \extp^k A$. $\psi$ is a smooth locally-injective homomorphism of Lie groups. In particular, $\psi$ is an immersion. When $k$ is odd $\psi$ is injective. When $k$ is even and $d$ is odd, the restrictions $\psi|_{\GL^+(\R^d)},\psi|_{\GL^-(\R^d)}$ are injective.
\end{corollary}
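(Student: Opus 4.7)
The plan is to verify each claim in turn, with Lemma \ref{lem:almost_injectivity_of_exterior_map_invertible_case} and Corollary \ref{cor:injectivity_of_exterior_map} as the only substantive inputs; everything else is routine Lie theory. Smoothness is immediate because, in any basis, the entries of $\extp^k A$ are homogeneous polynomials of degree $k$ (the $k \times k$ minors) in the entries of $A$. The identity $\extp^k(AB) = \extp^k A \circ \extp^k B$ is the standard multiplicativity of exterior powers, so $\psi$ is a Lie group homomorphism.

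Next, I would deduce local injectivity directly from Lemma \ref{lem:almost_injectivity_of_exterior_map_invertible_case}: if $\psi(A) = \psi(B)$, then $A = \pm B$, and since $A \neq -A$ in $\GL(\R^d)$, any sufficiently small neighborhood $U$ of $A$ (for instance, one of diameter less than $\|2A\|$) can contain no pair $\{C, -C\}$, so $\psi|_U$ is injective.

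To upgrade local injectivity to the immersion statement, I would observe that the kernel of $\psi$ is contained in $\{\pm \id\}$ (take $B = \id$ in the lemma) and therefore discrete. For any Lie group homomorphism with discrete kernel, the differential at the identity is injective: a nonzero $X \in \ker d\psi_e$ would yield a one-parameter subgroup $\exp(tX) \subseteq \ker \psi$, contradicting discreteness. The equivariance $\psi \circ L_A = L_{\psi(A)} \circ \psi$, together with the fact that left translations are diffeomorphisms, then transports injectivity of $d\psi_e$ to every point, so $\psi$ is an immersion.

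Finally, for the sharper global injectivity statements I would invoke the parity information. If $k$ is odd and $\psi(A) = \psi(B)$, then by Lemma \ref{lem:almost_injectivity_of_exterior_map_invertible_case} we have $A = \pm B$; the alternative $A = -B$ would force $\extp^k A = (-1)^k \extp^k B = -\extp^k B$, which together with $\extp^k A = \extp^k B$ contradicts the invertibility of $\extp^k B$, so $A = B$. When $k$ is even and $d$ is odd, injectivity of $\psi$ on $\GL^+(\R^d)$ and on $\GL^-(\R^d)$ separately is exactly the content of Corollary \ref{cor:injectivity_of_exterior_map}. The only mildly delicate step in the whole argument is the passage from discrete kernel to injective differential via the exponential map; the rest is bookkeeping.
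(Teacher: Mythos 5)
Your proposal is correct, and it fills in the details that the paper's proof calls "obvious" (smoothness via polynomiality of minors, the multiplicativity identity, local injectivity and the parity dichotomy via Lemma~\ref{lem:almost_injectivity_of_exterior_map_invertible_case} and Corollary~\ref{cor:injectivity_of_exterior_map}). The one place you and the paper diverge is the argument that local injectivity promotes to the immersion property. The paper invokes the rank theorem: since $\psi$ is a Lie group homomorphism it has constant rank, and a constant-rank map that is locally injective must have full rank, hence be an immersion. You instead argue at the identity using the exponential map: a nonzero $X\in\ker d\psi_e$ would generate a one-parameter subgroup $\exp(tX)$ inside $\ker\psi$, contradicting discreteness of the kernel (which you have from $\ker\psi\subseteq\{\pm\id\}$); then left-translation equivariance transports full rank from $e$ to every point. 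Both arguments are standard Lie theory and both ultimately ride on the same homogeneity of $\psi$; the rank-theorem route outsources the key step to a single citation, while your route is slightly more self-contained and makes explicit exactly which structure (exp, discreteness of $\ker\psi$) is being used. Either would be acceptable in the paper.
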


\begin{proof}
All the properties, except for the immersion property are obvious. 
The fact that $\psi$ is an immersion follows from the (elementary) fact that every smooth locally-injective homomorphism of Lie groups is an immersion. One way to see this is as follows: $\psi$ has constant rank, so it must be an immersion by the rank theorem (Theorems 4.12 and 7.5 in \cite{Lee13}). 
\end{proof}

\begin{lemma}
\label{lem:exterior_map_is_proper}
$\psi:\GL(\R^d) \to \GL(\extp^k \R^d)$ is proper. 
%In particular,
%\Asaf{Actually we do not need the closed subgroup theorem! Our proof in fact shows that $\psi$ is proper! hence it's a proper injective immersion, hence an embedding, hence the image is an embedded submanifold!}
\end{lemma}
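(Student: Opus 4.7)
The plan is to establish sequential properness: if $K \subset \GL(\extp^k \R^d)$ is compact and $(A_n) \subset \GL(\R^d)$ satisfies $\psi(A_n) \in K$, I want to show that $(A_n)$ admits a subsequence converging in $\GL(\R^d)$. After passing to a subsequence we may assume $\extp^k A_n \to B$ in $K$, and continuity of $\psi$ will then give $\extp^k A = B$ for any limit point $A$. So the only real task is to produce a limit point of $(A_n)$ in $\GL(\R^d)$, not on the boundary where invertibility is lost.

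The key tool is the singular value decomposition. Writing $A_n = U_n \Sigma_n V_n^T$ with $U_n, V_n \in O(d)$ and $\Sigma_n = \text{diag}(\sigma_1(A_n),\dots,\sigma_d(A_n))$, multiplicativity of $\extp^k$ gives
\[
\extp^k A_n \;=\; (\extp^k U_n)\,(\extp^k \Sigma_n)\,(\extp^k V_n)^T,
\]
where $\extp^k U_n$ and $\extp^k V_n$ are orthogonal (with respect to the natural inner product on $\extp^k \R^d$) and $\extp^k \Sigma_n$ is diagonal with entries $\sigma_{i_1}(A_n)\cdots\sigma_{i_k}(A_n)$ indexed by $k$-subsets $\{i_1<\dots<i_k\}$. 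Hence these $k$-fold products are precisely the singular values of $\extp^k A_n$. Since $\extp^k A_n \to B \in \GL(\extp^k \R^d)$, the singular values of $\extp^k A_n$ converge to the strictly positive singular values of $B$, so there exist constants $0<m\le M<\infty$ with
\[
m \;\le\; \sigma_{i_1}(A_n)\cdots\sigma_{i_k}(A_n) \;\le\; M
\]
for every $k$-subset and every $n$.

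From this I would extract uniform two-sided bounds on each $\sigma_i(A_n)$. First, taking the ratio of two $k$-products that differ in exactly one index shows $\sigma_j(A_n)/\sigma_l(A_n) \in [m/M, M/m]$ for all $j,l$, so all singular values stay within a bounded ratio. Second, from the identity $\det(\extp^k A) = (\det A)^{\binom{d-1}{k-1}}$ together with $\det(\extp^k A_n) \to \det B \neq 0$, the quantity $|\det A_n| = \sigma_1(A_n)\cdots\sigma_d(A_n)$ is bounded above and bounded below away from $0$. Combining these two observations pins each $\sigma_i(A_n)$ inside a fixed compact subinterval of $(0,\infty)$; hence both $(A_n)$ and $(A_n^{-1})$ are uniformly bounded in $\End(\R^d)$, and a subsequence converges to some $A$ whose singular values are strictly positive, so $A \in \GL(\R^d)$. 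I expect this singular-value bookkeeping to be the main technical point; note that it genuinely uses the hypothesis $B \in \GL(\extp^k \R^d)$---both to lower-bound $|\det B|$ and to lower-bound the smallest singular value of $\extp^k A_n$---in line with the invertibility assumption on $\extp^k df$ in \thmref{thm:regularity_via_minors_unique_case}.
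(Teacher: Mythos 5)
Your proof is correct and follows essentially the same route as the paper's: after reducing to a convergent subsequence of $\extp^k A_n$, both proofs use the SVD to read off the singular values of $\extp^k A_n$ as $k$-fold products of those of $A_n$, then control ratios $\sigma_i/\sigma_j$ by comparing two products differing in one index (needing $k\le d-1$), and finally combine with a product bound to control each $\sigma_i$. The only cosmetic deviations are that you invoke the determinant identity $\det(\extp^k A)=(\det A)^{\binom{d-1}{k-1}}$ where the paper uses convergence of a single $k$-fold product, and you settle for uniform two-sided bounds on singular values followed by sequential compactness rather than showing the $\sigma_i^n$ themselves converge.
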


\begin{corollary}
\label{cor:image_exterior_map_is_closed_embedded}
When $k$ is odd $\psi:\GL(\R^d) \to \GL(\extp^k \R^d)$ is an embedding, and its image $ H:=\text{Image} (\psi)$ is a closed embedded submanifold of $ \GL(\extp^k \R^d)$. When $k$ is even, the same applies to the restrictions $\psi|_{\GL^+(\R^d)},\psi|_{\GL^-(\R^d)}$.
\end{corollary}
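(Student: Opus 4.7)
The plan is to combine the injectivity and immersion properties from Corollary \ref{cor:exterior_map_properties} with the properness from Lemma \ref{lem:exterior_map_is_proper}, and invoke the standard principle that a proper, injective immersion between smooth manifolds is a smooth embedding whose image is a closed embedded submanifold of the target.

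First I would record that, in each case of the corollary, the relevant map is an injective smooth immersion. When $k$ is odd, Corollary \ref{cor:exterior_map_properties} already gives this for $\psi$ itself. When $k$ is even, the same corollary gives injectivity of each restriction $\psi|_{\GL^+(\R^d)}$ and $\psi|_{\GL^-(\R^d)}$ separately; these restrictions remain smooth immersions because $\GL^\pm(\R^d)\hookrightarrow \GL(\R^d)$ are open embeddings, so the derivative at each point is unchanged.

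Next I would transfer properness to the restrictions. Since $\GL^+(\R^d)$ and $\GL^-(\R^d)$ are the connected components of $\GL(\R^d)$, they are both open and closed there, and the restriction of a proper map to a closed subset is again proper; so by Lemma \ref{lem:exterior_map_is_proper} each $\psi|_{\GL^\pm(\R^d)}$ is proper. Now I would invoke two standard facts: a proper continuous map into a locally compact Hausdorff space is closed, and an injective immersion that is simultaneously a topological embedding is a smooth embedding whose image is an embedded submanifold (cf.\ Propositions 4.22 and 5.4 of \cite{Lee13}). The target $\GL(\extp^k \R^d)$ is an open subset of the finite-dimensional vector space $\End(\extp^k \R^d)$, hence locally compact Hausdorff, so each of our maps is closed; being injective and closed it is a topological embedding, and being also an immersion it is a smooth embedding. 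Applying the closedness to the entire domain then yields that the image $H$ is a closed subset of $\GL(\extp^k \R^d)$, which together with the embedding property shows that $H$ is a closed embedded submanifold.

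I do not expect a genuine obstacle here: all of the substantive content, namely the rigidity underlying injectivity (Lemma \ref{lem:almost_injectivity_of_exterior_map_invertible_case}) and the properness (Lemma \ref{lem:exterior_map_is_proper}), has already been established, and what is left is a standard packaging argument from differential topology. The only mild point is to verify that properness restricts well to $\GL^\pm(\R^d)$, which is immediate since these are clopen in $\GL(\R^d)$.
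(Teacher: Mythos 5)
Your proposal is correct and follows the same approach as the paper: combine the injective immersion from Corollary \ref{cor:exterior_map_properties} with properness from Lemma \ref{lem:exterior_map_is_proper}, then apply the standard fact that a proper injective immersion is an embedding with closed image. The extra paragraph you include about transferring properness to the clopen components $\GL^\pm(\R^d)$ fills in a detail the paper glosses over with ``the proof for the case of even $k$ is the same,'' but it is the same argument.
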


\begin{proof}[Of \corref{cor:image_exterior_map_is_closed_embedded}]

Suppose first $k$ is odd. $\psi$ is then a smooth injective immersion by \corref{cor:exterior_map_properties}, and proper by \lemref{lem:exterior_map_is_proper}. 
Every smooth injective proper immersion is an embedding, by elementary topology. (see Proposition 4.22 in \cite{Lee13}). Moreover, every proper continuous map is closed (see Proposition A.57 in \cite{Lee13}). In particular, the image $H:=\text{Image} (\psi)$ is closed. The proof for the case of even $k$ is the same.
\end{proof}

%This holds for both versions of $\psi$ (on $\GL^+(\R^d)$ and on $\GL^-(\R^d)$).

%Without loss of generality, we prove this claim for the case $\GL(\R^d)$. 

\begin{proof}[Of \lemref{lem:exterior_map_is_proper}]

Let $K \subseteq  \GL(\extp^k \R^d)$ be compact, and let $A_n \in \psi^{-1}(K)$. We shall prove $A_n$ has a convergent subsequence in $\psi^{-1}(K)$. It suffices to prove $A_n$ converges (modulo a subsequence) in $M_d(\R)$; indeed, if $A_n \to A$, then $\extp^k A_n \to \extp^k A$, and the limit $\extp^k A$ must be in $K$. In particular, $\extp^k A \in  \GL(\extp^k \R^d)$ so $A \in \GL(\R^d)$. %Since $\det A_n>0$, it follows that $A \in \GL(\R^d)$.

%The case $\GL^-(\R^d)$ is identical.
%We will show $H$ is closed in $ \GL(\extp^k \R^d)$. By the closed subgroup theorem it is embedded.

%Let $A_n \in \GL^+(\mathbb{R}^d)$ and suppose that $\extp^k A_n$ converges to some element $D \in \GL(\extp^k\mathbb{R}^d)$. 

%So, we have $A_n \in \psi^{-1}(K) \subseteq  \GL(\R^d)$, and we $\extp^k A_n \in K$. 

After passing to a subsequence, we may assume $\extp^k A_n$ converges to some element $D \in \GL(\extp^k\mathbb{R}^d)$.

Using singular values decomposition, we may assume that $A_n=\text{diag}(\sigma_1^n,\dots,\sigma_d^n)$  is diagonal (since the orthogonal group is compact, the orthogonal components surely converge after passing to a subsequence).
$\extp^k A_n$ is diagonal with eigenvalues $\Pi_{r=1}^k \sigma_{i_r}^n$, where all the $i_r$ are distinct. So, every such product converges when $n \to \infty$. Let $1\le i \neq j \le d$. Since $k \le d-1$, we can choose some $1 \le i_1,\dots,i_{k-1} \le d$ all different from $i,j$. Since both products 
\[
(\Pi_{r=1}^{k-1} \sigma_{i_r}^n)\sigma_{i}^n \quad\text{and}\quad (\Pi_{r=1}^{k-1} \sigma_{i_r}^n)\sigma_{j}^n
\]
converge to positive numbers (since $D$ is invertible) , so does their ratio $C_{ij}^n=\frac{\sigma_i^n}{\sigma_j^n}$. Now, 
\[
\Pi_{r=1}^k \sigma_{r}^n=\Pi_{r=1}^k \sigma_{1}^n\frac{\sigma_r^n}{\sigma_1^n}=(\sigma_{1}^n)^k  \Pi_{r=1}^k C_{r1}^n
\]
 converges to a positive number. Since all the $C_{r1}^n$ converge, we deduce $\sigma_1^n$ also converges. Without loss of generality, the same holds for every $\sigma_i^n$, so $A_n$ converges. 
\end{proof}

Now, we have all the preliminary results we need for proving the main result:
  
\begin{proof}[Of Theorem \ref{thm:regularity_via_minors_unique_case}]
%  Since $S$ is an embedded submanifold, $\psi:\GL^+(\R^d) \to S$ is a diffeomorphism. Now composing $x \to \extp^k df_x$ with the smooth inverse of $\psi$ finishes the job.

We prove this claim for the case where $\det df>0$ a.e. The proof of the other two cases, $\det df<0$, and the odd $k$-case is similar.

Recall that $\psi:\GL^+(\R^d) \to \GL(\extp^k \R^d)$ is given by $\psi(A)= \extp^k A$. By \corref{cor:image_exterior_map_is_closed_embedded},  $\psi$ is a  smooth embedding, and $ H:=\text{Image} (\psi)$ is a closed embedded submanifold of $\GL(\extp^k\R^d)$. Thus, $\psi:\GL^+(\R^d) \to H$ is a diffeomorphism.

The map
\[
\phi: x \to \bigwedge^k df_x \,\,,\,\, \phi:\Omega \to \GL\big(\extp^k\R^d\big)
\]
is smooth by assumption, and its image is contained in $H$. Indeed, since $H$ is closed in $ \GL(\extp^k \R^d) $, $\phi^{-1}(H)$ is closed in $\Omega$. Furthermore, on a set of full measure $\det df >0$; for every $x \in \Omega$ where $\det df_x >0$ we clearly have $\phi(x) \in H$. This implies $\phi^{-1}(H)$ is dense in $\Omega$, hence $\phi^{-1}(H)=\Omega$.
 
Since $H$ is an embedded submanifold of $\GL(\extp^k\R^d)$, $\phi$ remains smooth after restricting the codomain to $H$ (see corollary 5.30 in \cite{Lee13}). Thus, the map
\[
\tilde \phi: x \to \bigwedge^k df_x \,\,,\,\, \tilde \phi:\Omega \to H
\]
is smooth.
Finally, since $\psi:\GL^+(\R^d) \to H$ is a diffeomorphism, we deduce that the following map $\Omega \to \GL^+(\mathbb{R}^d)$,
\[
\Omega: x \mapsto \psi^{-1} \circ \tilde \phi(x)=df_x
\]
is smooth. (More precisely, $\psi^{-1} \circ \tilde \phi$ is smooth and coincides with the weak derivative $df$ almost everywhere). This establishes the smoothness of $f$.

Finally, we explain briefly why when $k$ is odd the assumption on the sign of the Jacobian can be omitted.  In that case, $\extp^k df$ uniquely determines $df$, even if we do not know $\sgn(\det df)$. Hence, the map $\GL(\R^d) \to \GL(\extp^k \R^d)$ is invertible, after restricting the codomain to be its image. (see \corref{cor:image_exterior_map_is_closed_embedded}).
\end{proof}
%Proof strategy:
%We prove $S$ is an embedded submanifold. It is here that we exploit the nice algebraic properties of the map $ \psi$- homomorphism etc   use the smart endo , which in particular shows 

\section{Regularity of conformal maps: The Euclidean case}
\label{sec:conformal_reg}

\subsection{Reduction from $W^{1,d/2}$ to $W^{1,d}$ }
In this section, we show the reduction of Liouville's theorem from the $W^{1,d/2}$ case to the  $W^{1,d}$ case; we essentially present the proof of  \cite{Iwa93} that all the ${d/2}$-minors of $f$ are smooth, phrased in a slightly different language. 

Their proof is based on a clever use of the conformal invariance of the Hodge dual operator. The convenient setting for the proof, which makes it more transparent, is that of Riemannian geometry. (This will also be useful for later use, when we discuss the regularity of conformal maps in the Riemannian setting, in section \ref{sec:Regularit_conformalRiemannianmani}).

%e seminal paper \cite{Iwa93}, the authors prove that $W^{1,d/2}$-conformal maps are smooth.
%The beautiful proof is essentially a \emph{reduction} to the case $W^{1,d}$, which the authors take for granted. Even though the paper \cite{Iwa93} is not short (53 pages), the part concerning the regularity result is in fact very short. To help bring this theorem to light, I have decided to reproduce here the main argument, and point where my result comes in, in hope this would help future readers.
%
% and accessible to non-experts in regularity theory. (It uses only standard elliptic regularity results). 

%In this section we present a 

Let $(\M,\g),(\N,\h)$ be smooth oriented $d$-dimensional Riemannian manifolds. We denote by $\Omega^k(\N)$ the space of smooth differential forms of degree $k$ on $\N$.

Let $\de:\Omega^k(\N) \to \Omega^{k-1}(\N)$ be the adjoint of the exterior derivative; $\de$ is given by the formula 
\[
\de(\omega)=(-1)^{dk+d+1} \star d \star \omega, \, \, \text{ where } \, \, \omega \in \Omega^k(\N).
\]

Let $f:\M \to \N$ be smooth. Here is the idea of the proof: % in a nutshell: 

Pullback by an arbitrary map commutes with the exterior derivative $d$. Isometries commute with the Hodge dual operator (up to a sign, depending on whether or not they are orientation-preserving). Thus, isometries commute with $\de$, that is $f^*\de \omega=\de f^*\omega$. In particular, pullback by an isometry \emph{preserves co-closedness of forms}, that is $\de \omega=0 \Rightarrow \de f^*\omega=0$.

Now we reach the crucial point of the argument:
 Since the Hodge dual operator acting on forms of degree $d/2$ is conformally invariant, conformal maps also preserve the co-closedness of forms at that degree. (from this perspective, they "behave like" isometries).

In more detail, let $\omega \in \Omega^k(\N)$, and suppose that  $\de \omega=0$, i.e. $d \star \omega =0$. Then 
\beq
\label{eq:Tad_star_conf_intemed0}
d \brk{f^* (\star \omega)}=f^*(d \star \omega)=f^*0=0.
\eeq
Thus, if $\omega$ is co-closed,  then $f^* (\star \omega)$ is closed.
Now, suppose that $f$ is an isometry. Then $f:\M  \to \N$ commutes with the Hodge dual operator, i.e.
\[
f^*(\star \omega)= \pm \star f^*\omega,
\]
and the sign is determined according to whether $f$ is orientation-preserving or orientation-reversing. Writing explicitly the role of the metrics (which determine the Hodge operators),  we see that for \emph{every map} $f$
\[
f^*(\star_{\h} \omega)= \pm \star_{f^*\h} f^*\omega.
\]

In particular, for conformal maps, we have $f^*\h=\lambda \g$ for some $\lambda \in C^{\infty}(\M)$. Now suppose that $d$ is even, and that $k=d/2$. The Hodge dual is conformally invariant in half the dimension, so if  $\omega \in \Omega^\frac{d}{2}(\N)$, then

\beq
\label{eq:Tad_star_conf_intemed1}
f^*(\star_{\h} \omega)= \pm \star_{f^*\h} f^*\omega= \pm \star_{\lambda \g} f^*\omega= \pm \star_{\g} f^*\omega.
\eeq

Note this argument fails if $\lambda=0$ at some point $p$, but then the conformal equation implies $df_p=0$, so both sides of \eqref{eq:Tad_star_conf_intemed1}  vanish at $p$.

If $\omega$ is co-closed, then by combining \eqref{eq:Tad_star_conf_intemed0},\eqref{eq:Tad_star_conf_intemed1} we obtain 

\beq
\label{eq:Tad_star_conf_intemed4}
\delta(f^*\omega)= \pm \star_{\g} d ( \star_{\g}  f^*\omega)=\pm \star_{\g} d f^*(\star_{\h} \omega)=0.
\eeq

%, there are different factors which arises in the pullbacks by $f$- essentially the pullback by $(\lambda Q)^*\omega=\lambda^k Q^*\omega$, when $Q$ is an isometry, and $\omega$ has degree $k$. Thus, the analogous formula for conformal maps is
%
%\beq
%\label{eq:Tad_star_conf_intemed1}
%f^*(\star \omega)= \sgn(\Det df)|\Det df|^{1-2\frac{k}{d}} \star f^*\omega,
%\eeq
%
%so in particular for orientation-preserving conformal maps, we have
%
%\beq
%\label{eq:Tad_star_conf_intemed2}
%f^*(\star \omega)= |\Det df|^{1-2\frac{k}{d}} \star f^*\omega.
%\eeq
%
%Combining equation \eqref{eq:Tad_star_conf_intemed0} together with equation \eqref{eq:Tad_star_conf_intemed2} implies 
%
%\beq
%\label{eq:Tad_star_conf_intemed3}
%d(|\Det df|^{1-2\frac{k}{d}} \star f^*\omega)=0.
%\eeq
%(This is equation (4.3) in \cite{Iwa93}). 

%Equation\eqref{eq:Tad_star_conf_intemed3} is equivalent to

So, we proved the following:
\begin{theorem}
\label{thm:conformal_eq_tadeus}
Let $\M,\N$ be smooth oriented $d$-dimensional Riemannian manifolds ($d$ even), and
let $f:\M \to \N$ be a smooth conformal map. Then for any co-closed $\omega \in \Omega^\frac{d}{2}(\N)$, $f^*\omega$ is co-closed.
The same result holds locally, that is we do not need $\omega$ to be defined on all $\N$; pullback of any co-closed form defined on an open subset of $\N$ is co-closed.
\end{theorem}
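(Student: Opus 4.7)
The plan is to formalize the chain of identities \eqref{eq:Tad_star_conf_intemed0}--\eqref{eq:Tad_star_conf_intemed4} sketched in the discussion preceding the statement, verifying each step globally under the conformality hypothesis. Fix a co-closed form $\omega \in \Omega^{d/2}(\N)$, so that $d\star_{\h}\omega = 0$. The first step is to pull this equation back by $f$ and use the fact that pullback commutes with the exterior derivative, giving
\[
d\brk{f^*\star_{\h}\omega} \;=\; f^*\brk{d\star_{\h}\omega} \;=\; 0,
\]
which exhibits $f^*\star_{\h}\omega$ as a closed form on $\M$.

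Next I would rewrite $f^*\star_{\h}\omega$ in terms of $\star_{\g} f^*\omega$. The relevant pointwise identity is the naturality of Hodge duality,
\[
f^*(\star_{\h}\omega) \;=\; \pm \star_{f^*\h}\, f^*\omega,
\]
valid at every point where $f^*\h$ is a genuine inner product on $T\M$, with the sign determined by whether $f$ is orientation-preserving or orientation-reversing. Since $f$ is conformal, $f^*\h = \lambda\,\g$ for some smooth nonnegative function $\lambda$. The key observation is that on forms of middle degree $d/2$ the Hodge star is conformally invariant, $\star_{\lambda \g} = \star_{\g}$ whenever $\lambda > 0$; this is a short linear-algebraic check in a $\g$-orthonormal frame, in which raising $d/2$ indices contributes a factor $\lambda^{-d/2}$ that exactly cancels the factor $\lambda^{d/2}$ coming from the volume form. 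Combining these two identities yields
\[
f^*\brk{\star_{\h}\omega} \;=\; \pm \star_{\g}\, f^*\omega
\]
pointwise on the open set $\{\lambda > 0\}$.

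The delicate point is the zero set $Z = \{p \in \M : \lambda(p) = 0\}$, on which $f^*\h$ degenerates and the identity above is a priori meaningless. Here I would use the conformal equation itself: at $p \in Z$ the relation $df_p^T df_p = \lambda(p)\,\g_p = 0$ forces $df_p = 0$, so both $f^*\omega$ and $f^*(\star_{\h}\omega)$ vanish at $p$, and the identity $f^*(\star_{\h}\omega) = \pm \star_{\g} f^*\omega$ extends trivially to all of $\M$. With the identity established globally, combining it with the closedness already proved gives
\[
\de(f^*\omega) \;=\; \pm \star_{\g}\, d\brk{\star_{\g} f^*\omega} \;=\; \pm \star_{\g}\, d\brk{f^*\star_{\h}\omega} \;=\; 0,
\]
up to the overall sign $(-1)^{dk+d+1}$ absorbed into the definition of $\de$.

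For the local version no modifications are needed: the entire derivation is pointwise in $\M$, and $\omega$ enters only through its values along $f(\M)$, so if $\omega$ is co-closed on an open set $U \subseteq \N$ the same argument yields co-closedness of $f^*\omega$ on $f^{-1}(U)$. The main ingredient to get right is the conformal invariance of $\star$ on middle-degree forms together with the handling of the degenerate locus $\{\lambda = 0\}$; everything else is formal naturality.
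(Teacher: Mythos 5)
Your proof is correct and follows essentially the same route the paper sketches in the chain of identities \eqref{eq:Tad_star_conf_intemed0}--\eqref{eq:Tad_star_conf_intemed4}: pullback commutes with $d$, naturality of $\star$, conformal invariance of $\star$ in middle degree, handling the degenerate locus $\{\lambda=0\}$ via $df_p^T df_p = 0 \Rightarrow df_p = 0$, and then the locality observation. You fill in the details (notably the $\lambda^{-d/2}$ versus $\lambda^{d/2}$ cancellation) a bit more explicitly than the paper does, but there is no difference of approach.
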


%$\omega$ is co-closed $\Rightarrow \star \omega$ is closed $\Rightarrow f^*(\star \omega)=\star f^* \omega$ is weakly closed $\Rightarrow f^* \omega$ is weakly co-closed.

Since any map pulls back closed forms into closed forms, we immediately obtain the following corollary:

\begin{theorem}
\label{thm:conformal_pull_back_pres}
Let $\M,\N$ be smooth oriented $d$-dimensional Riemannian manifolds, and
let $f:\M \to \N$ be a smooth conformal map. Then for any closed and co-closed $\omega \in \Omega^\frac{d}{2}(\N)$,  $f^*\omega$ is closed and co-closed.
\end{theorem}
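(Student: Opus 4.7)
The plan is to observe that the statement splits cleanly into two independent assertions: preservation of closedness and preservation of co-closedness, and each follows from a principle already in hand. The only real content is already contained in \thmref{thm:conformal_eq_tadeus}; the rest is a general fact about smooth pullbacks.

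First I would handle closedness. For any smooth map $f:\M \to \N$ (conformal or not) and any $\omega \in \Omega^{k}(\N)$, exterior differentiation commutes with pullback, i.e.\ $d(f^*\omega) = f^*(d\omega)$. Consequently, if $d\omega=0$ then $d(f^*\omega)=0$, so $f^*\omega$ is closed. This applies in particular to $\omega \in \Omega^{d/2}(\N)$.

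Second, for co-closedness I would simply invoke \thmref{thm:conformal_eq_tadeus}: since $f$ is a smooth conformal map between oriented $d$-dimensional Riemannian manifolds (with $d$ even, as is implicit in the degree $d/2$ of $\omega$), and since $\omega \in \Omega^{d/2}(\N)$ is co-closed, the pullback $f^*\omega$ is co-closed. Combining the two conclusions shows $f^*\omega$ is both closed and co-closed, as required.

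There is no genuine obstacle here: the assertion is essentially a restatement of \thmref{thm:conformal_eq_tadeus} together with the naturality of $d$ under pullback. The only subtle point, already addressed in the derivation of \thmref{thm:conformal_eq_tadeus}, is that the computation \eqref{eq:Tad_star_conf_intemed1} uses the conformal factor $\lambda$ via $f^*\h = \lambda \g$, and at points where $\lambda$ vanishes one must note that $df$ vanishes there as well so both sides are zero; but this has already been accounted for in the earlier argument and requires nothing further at this stage.
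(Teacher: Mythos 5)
Your proposal is correct and matches the paper's own derivation exactly: the paper obtains Theorem~\ref{thm:conformal_pull_back_pres} as an immediate corollary of Theorem~\ref{thm:conformal_eq_tadeus} together with the fact that pullback by any smooth map commutes with $d$ and hence preserves closedness. Nothing is missing.
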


Now, we would like to use an approximation argument, in order to obtain a weak version of this theorem.  However, as we shall immediately see, we will have to restrict the class of closed and co-closed forms we pullback to some very specific subclass.

%However, some special care will be needed, as we shall immediately see.  

%We shall use the following
%
%\begin{definition}
%\label{def:weakly_conformal}
%We say that $f \in W^{1,s}(\M,\N)$ is weakly conformal, if
%\[
%df^T df=|\det df|^{2/d} \id_{T\M},
%\]
%and
%\[
%\det df \ge 0 \, \, \text{ a.e.} \, \, \text{  or } \,   \det df \le 0 \, \, \text{ a.e. }
%\]
%%\begin{comment}
%%The first equation states that $f$ is conformal. The second requirement means the Jacobian does not change sign.
%%\end{comment}
%\end{definition}

The weak version of Theorem \ref{thm:conformal_pull_back_pres} we are aiming at would be something like the following:

\begin{quote}
\label{thm:conformal_pull_back_pres_weak}
Let $f \in W^{1,\frac{d}{2}}(\M,\N)$ be a weakly conformal map which is approximable by smooth maps. 

Then for any closed and co-closed $\omega \in \Omega^\frac{d}{2}(\N)$,  $f^*\omega$ is weakly closed and weakly co-closed. In particular $  f^*  \omega$ is smooth.
\end{quote}

A crucial step in obtaining such a weak version from the smooth one is to establish \emph{universal properties}, which hold for \emph{any} Sobolev map, not just conformal ones. The reason is that when we use an approximation argument, we cannot assume the approximating sequence is composed of conformal maps. Thus, we first prove some universal properties, and then we "plug in" the conformality property.

The first step is to prove a weak analogue of the fact that all smooth maps pull back closed forms to closed forms:
\begin{lemma}
\label{lem:sobolev_pullback_preserve_closedness_constant}
Let $n,d \in \mathbb{N}$, and let $\Omega \subseteq \R^n$ be  open.
Let $f \in W^{1,s}(\Omega,\R^d)$, $s \ge k\in \mathbb{N}$. Let $\omega \in \Omega^k(\R^d)$ be a \emph{constant} $k$-form; that is $\omega_q= \alpha$ independently of $q \in \R^d$, where $\alpha$ is a fixed element in $\bigwedge^k (\R^d)^* $. Then $f^*\omega$ is weakly closed.
\end{lemma}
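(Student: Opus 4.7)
The plan is to argue by smooth approximation together with a continuity-of-minors argument. By expanding $\alpha$ in the standard basis of $\bigwedge^k (\R^d)^*$ and using linearity of both $f^*$ and the weak exterior derivative, it suffices to treat the basis case $\omega = dy^{i_1}\wedge\dots\wedge dy^{i_k}$, in which case
\[
f^*\omega \;=\; df^{i_1}\wedge\dots\wedge df^{i_k},
\]
whose coefficients (relative to the standard basis of $\bigwedge^k(\R^n)^*$) are $k\times k$ minors of $df$. Since each $\partial_\alpha f^i \in L^s(\Omega)$ and $s\ge k$, iterated H\"older gives $f^*\omega \in L^{s/k}_{loc}(\Omega)\subseteq L^1_{loc}(\Omega)$, so the assertion ``$d(f^*\omega)=0$ in the sense of distributions'' is meaningful: explicitly, it means $\int_\Omega f^*\omega\wedge d\eta = 0$ for every smooth, compactly supported $(n-k-1)$-form $\eta$ on $\Omega$.

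First I would mollify. Let $\rho_\epsilon$ be a standard mollifier and set $f_\epsilon = \rho_\epsilon * f \in C^{\infty}(\Omega_\epsilon,\R^d)$, where $\Omega_\epsilon = \{x\in\Omega:\mathrm{dist}(x,\partial\Omega) > \epsilon\}$. Standard Sobolev theory gives $f_\epsilon\to f$ in $W^{1,s}_{loc}(\Omega)$ as $\epsilon\to 0$. For each smooth $f_\epsilon$ the classical naturality identity $d(f_\epsilon^*\omega) = f_\epsilon^*(d\omega) = 0$ applies (since $d\omega=0$), so Stokes' theorem yields
\[
\int_\Omega f_\epsilon^*\omega \wedge d\eta \;=\; (-1)^{k+1}\int_\Omega d(f_\epsilon^*\omega)\wedge\eta \;=\; 0
\]
for every admissible test form $\eta$, as soon as $\epsilon$ is small enough that $\mathrm{supp}\,\eta \subset \Omega_\epsilon$.

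The final step is to pass to the limit $\epsilon\to 0$. On the compact set $K := \mathrm{supp}\,\eta$ one has $\partial_\alpha f_\epsilon^i \to \partial_\alpha f^i$ in $L^s(K)$; the multilinear H\"older inequality then implies that each $k$-fold product of partial derivatives appearing as a coefficient of $f_\epsilon^*\omega$ converges in $L^{s/k}(K)\subseteq L^1(K)$ to the corresponding coefficient of $f^*\omega$. Since $d\eta$ is bounded with compact support, we conclude
\[
\int_\Omega f^*\omega\wedge d\eta \;=\; \lim_{\epsilon\to 0}\int_\Omega f_\epsilon^*\omega\wedge d\eta \;=\; 0,
\]
which is precisely the weak closedness of $f^*\omega$. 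The main (and only) technical point is this $L^{s/k}$-continuity of the $k$-minors of $df$ under Sobolev convergence; it is pure multilinear H\"older and is where the hypothesis $s\ge k$ enters the argument in an essential way.
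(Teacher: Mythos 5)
Your proof is correct and follows essentially the same strategy as the paper: approximate $f$ by smooth maps, use the classical naturality $d(f_\epsilon^*\omega)=f_\epsilon^*(d\omega)=0$, and pass to the limit via $L^1_{loc}$ convergence of the $k$-minors. The only differences are cosmetic: you use the wedge-pairing $\int f^*\omega\wedge d\eta$ against $(n-k-1)$-forms while the paper pairs against $\delta\sigma$ for $(k+1)$-forms (equivalent formulations of weak closedness); you instantiate the smooth approximants as mollifications rather than an abstract sequence; and you carry out the multilinear H\"older estimate inline, where the paper packages it as Lemma~\ref{lem:Sobolev_approximation_lifts_to_exterior_powers} and declares the proof routine (which it is).
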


\lemref{lem:sobolev_pullback_preserve_closedness_constant} follows from the following 

\begin{lemma}[Sobolev approximation lifts to $L^p$ convergence of the exterior powers]
\label{lem:Sobolev_approximation_lifts_to_exterior_powers}

Suppose that $f_n \in W^{1,s}(\M,\N)$ converges to $f$ in $W^{1,s}$, and that $s \ge k \in \mathbb{N}$.

Then $\bigwedge^k df_n $ converges to $\bigwedge^k df $ in $L^1$, i.e. 
\beq
\label{eq:ext_power_approx}
 \int_{\Omega}\left|\bigwedge^k df_n -\bigwedge^k df \right| \to 0.
 \eeq

\begin{comment}
To make sense of equation \eqref{eq:ext_power_approx} when the target is not \emph{Euclidean} $\R^d$, one needs to embed $\N$ isometrically into a higher-dimensional Euclidean space $\R^D$. (Since $df,df_n$ are linear maps between different vector spaces, their difference is not defined without extrinsic embedding).
\end{comment}
\end{lemma}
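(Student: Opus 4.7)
The plan is to reduce the lemma to a Lipschitz-with-growth estimate for the polynomial map $A \mapsto \extp^k A$. After fixing an isometric embedding $\N \hookrightarrow \R^D$ (so that differentials are matrix-valued and $|df_n - df|$ makes pointwise sense intrinsically), every entry of $\extp^k A$ is a homogeneous polynomial of degree $k$ in the entries of $A$. Applying the mean value theorem coordinate-wise to this polynomial immediately yields the pointwise bound
\[
\left|\extp^k A - \extp^k B\right| \le C \bigl(|A|^{k-1} + |B|^{k-1}\bigr)\, |A - B|,
\]
with $C$ depending only on $k$ and $d$.

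Substituting $A = df_n$ and $B = df$ and applying H\"older's inequality with the conjugate exponents $p = \frac{s}{k-1}$ and $q = \frac{s}{s-k+1}$ (well defined since $s \ge k$) gives
\[
\int_{\M} \left|\extp^k df_n - \extp^k df\right| \le C \bigl(\|df_n\|_{L^s}^{k-1} + \|df\|_{L^s}^{k-1}\bigr) \, \|df_n - df\|_{L^q}.
\]
The first factor is uniformly bounded in $n$ since a convergent sequence is norm-bounded. For the second, observe that $q \le s$, with equality precisely when $s = k$. In the borderline case $s = k$ this factor tends to zero directly from the $W^{1,s}$-convergence hypothesis; in the case $s > k$, one restricts to a subset of finite measure and uses the continuous inclusion $L^s \hookrightarrow L^q$ there to upgrade $L^s$-convergence to $L^q$-convergence locally.

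The main obstacle is bookkeeping rather than substance: when $s > k$ and $\M$ has infinite measure, the conclusion is most naturally interpreted as convergence in $L^1_{\mathrm{loc}}$, which is perfectly adequate for the intended application to weak closedness (a local assertion). No additional ingredient beyond H\"older's inequality and the polynomial nature of $\extp^k$ is required; the slight delicacy is only in checking that the exponents $p$ and $q$ behave correctly in the critical range $s = k$, where the chosen pair degenerates to $(s/(k-1),\, s)$ and everything still fits.
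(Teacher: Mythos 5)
Your proof is correct, and it is almost certainly the ``routine'' argument the paper has in mind but does not spell out (the paper explicitly omits the proof of this lemma with the remark that it is standard). The pointwise estimate $|\extp^k A - \extp^k B| \le C(|A|^{k-1}+|B|^{k-1})|A-B|$ follows from writing $\extp^k A - \extp^k B$ as a telescoping sum over the $k$ slots of the alternating multilinear expression, or, as you say, from the mean value theorem applied to each polynomial entry; the H\"older pairing $p = s/(k-1)$, $q = s/(s-k+1)$ is the right one and indeed satisfies $1/p + 1/q = 1$ with $q \le s$ exactly when $s \ge k$. Your remarks about the endpoint $s = k$ (where $q = s$ and the conclusion follows directly from the hypothesis) and about working locally when $s > k$ on a domain of infinite measure are both accurate, and the $L^1_{\mathrm{loc}}$ interpretation is the one actually used downstream, since weak closedness is tested against compactly supported forms. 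One small point worth stating explicitly in a final write-up: you need uniform boundedness of $\|df_n\|_{L^s}$, which you correctly get from $W^{1,s}$-convergence, and for the manifold case you should fix the isometric embedding $\N\hookrightarrow\R^D$ up front so that $|df_n - df|$ has a well-defined pointwise meaning --- the paper itself flags this in a commented-out remark.
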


The proof of \lemref{lem:Sobolev_approximation_lifts_to_exterior_powers} is routine so we omit it. (In \cite{Iwa93}, the authors did not even mention explicitly this middle-step, but instead remarked that the weak analogue follows from a standard approximation argument).

\begin{proof}[Of \lemref{lem:sobolev_pullback_preserve_closedness_constant}]
Let $f_n \in C^{\infty}(\Omega,\R^d)$ satisfy $f_n \to f$ in $W^{1,s}(\Omega,\R^d)$. We need to show that for every compactly-supported $k+1$-form $\sigma \in \Omega^{k+1}(\Omega)$ 
\[
\int_{\Omega} \IP{f^*  \omega}{\de \sig}=0.
\]
Indeed,
\[
\begin{split}
\int_{\Omega} \IP{f^*  \omega}{\de \sig}=\lim_{n \to \infty} \int_{\Omega} \IP{ f_n^*  \omega}{\de \sig}=\lim_{n \to \infty} \int_{\Omega} \IP{d f_n^*  \omega}{ \sig}=0,
\end{split}
\]
where in the last equality we used the fact that $d f_n^*  \omega=f_n^*d   \omega=0$, that is pullback by smooth maps commute with the exterior derivative. We turn to justify the first equality:
\[
\left|\int_{\Omega} \IP{f^*  \omega}{\de \sig}- \IP{ f_n^*  \omega}{\de \sig}\right|  \le \int_{\Omega} |f^*  \omega-f_n^*  \omega||\de \sig| \le \|\de \sig\|_{sup} \int_{\Omega} |f^*  \omega-f_n^*  \omega|.
\]
Since 
\beq
\label{eq:pullback_diff_est1}
|f^*  \omega-f_n^*  \omega| \le  |\alpha| \, \left|\extp^{k} df-\extp^{k} df_n\right|_{\mathrm{op}},
\eeq
we obtain
\[
\left|\int_{\Omega} \IP{f^*  \omega}{\de \sig}- \IP{ f_n^*  \omega}{\de \sig}\right| \le  |\alpha| \|\de \sig\|_{\sup} \int_{\Omega} \left|\extp^{k} df-\extp^{k} df_n\right|.
\]
The RHS tends to zero by \lemref{lem:Sobolev_approximation_lifts_to_exterior_powers}.

The crucial point here is the estimate \eqref{eq:pullback_diff_est1}; let's consider it more closely:
\beq
\label{eq:pullback_diff_est2}
\begin{split}
& |f^*  \omega-f_n^*  \omega|(p)= \\
&\left|\omega_{f(p)} \circ \extp^{k} df_p -\omega_{f_n(p)} \circ \extp^{k} (df_n)_p \right| = \\
&\left|\alpha \circ  \brk{\extp^{k} df_p-\extp^{k} (df_n)_p} \right| \le |\alpha | \cdot  \left|\extp^{k} df_p-\extp^{k} (df_n)_p\right|_{\mathrm{op}}.
\end{split}
\eeq

%\beq
%\label{eq:pullback_diff_est2}
%\begin{split}
%& |(f^*  \omega-f_n^*  \omega)e_I|(p)= \\
%&|\omega_{f(p)}\brk{\extp^{k} df_p \brk{e_I}}-\omega_{f_n(p)}\brk{\extp^{k} (df_n)_p \brk{e_I}}| = \\
%&|\alpha \brk{\extp^{k} df_p \brk{e_I}}-\alpha \brk{\extp^{k} (df_n)_p \brk{e_I}}| = \\
%&|\alpha \circ  \brk{\extp^{k} df_p-\extp^{k} (df_n)_p}\brk{e_I} | \le |\alpha| \cdot |\extp^{k} df_p-\extp^{k} (df_n)_p|_{op}.
%\end{split}
%\eeq

The key point here is the fact $\omega$ is a "constant" form - we used this in the passage to the third line. Without it, we are left with estimating $\omega_{f(p)}-\omega_{f_n(p)}$. Without uniform convergence $f_n \to f$, there is probably not much hope that this quantity would converge to zero. Of course, such a uniform convergence of a smooth sequence is possible if and only if $f$ is continuous.
\end{proof}
%\begin{comment}

%\end{comment}

As an immediate consequence of \lemref{lem:sobolev_pullback_preserve_closedness_constant} we have the following:

\begin{corollary}
\label{cor:co-closed_into_closed_pullbacks}
Let $\omega \in \Omega^k(\R^d)$ be a constant form. Let $f \in W^{1,s}(\Omega,\R^d)$ for $s \ge d-k \,$. Then $ f^* (\star \omega)$ is weakly closed. 
\end{corollary}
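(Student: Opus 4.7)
The plan is to observe that the Hodge star operator on Euclidean $\R^d$ with the standard flat metric acts pointwise via a fixed linear isomorphism $\star : \bigwedge^k (\R^d)^* \to \bigwedge^{d-k} (\R^d)^*$. Therefore, if $\omega$ is a constant $k$-form with $\omega_q = \alpha$ independently of $q$, then $\star \omega$ is a constant $(d-k)$-form with $(\star\omega)_q = \star \alpha$ independently of $q$. In particular, $\star \omega$ is itself a constant form, to which the previous lemma applies.

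With this observation, the corollary reduces immediately to \lemref{lem:sobolev_pullback_preserve_closedness_constant}: apply that lemma to the constant form $\star \omega \in \Omega^{d-k}(\R^d)$ of degree $d-k$. The hypothesis $s \ge d-k$ is exactly the integrability condition required by the lemma (playing the role of the hypothesis $s \ge k$ there, with $k$ replaced by $d-k$). We conclude that $f^*(\star \omega)$ is weakly closed.

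There is essentially no obstacle: the only point worth double-checking is that "constant" is preserved by the Hodge star on $\R^d$, which holds because the standard Euclidean metric and standard orientation are translation-invariant, so the Hodge star is given by a fixed pointwise linear map. Writing this as a short, two-line proof with an explicit reference to \lemref{lem:sobolev_pullback_preserve_closedness_constant} should suffice.
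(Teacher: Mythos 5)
Your proof is correct and is exactly what the paper intends: the corollary is stated as an "immediate consequence" of \lemref{lem:sobolev_pullback_preserve_closedness_constant}, obtained by noting that $\star\omega$ is again a constant form (of degree $d-k$) and applying the lemma with $k$ replaced by $d-k$. Your observation about translation-invariance of the Euclidean Hodge star is the right justification.
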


Specializing to the conformal case, we deduce the following:

\begin{proposition}
\label{prop:conf_pres_co_closed_half}
Let $\omega \in \Omega^{\frac{d}{2}}(\R^d)$ be constant, and let $f\in W^{1,s}(\Omega,\R^d)$ for $s \ge \frac{d}{2} \,$ be weakly conformal. Then $ \star f^*  \omega$ is weakly closed, which implies that $ f^*  \omega$ is weakly co-closed.
\end{proposition}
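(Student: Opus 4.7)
The plan is to reduce this to the already-established Lemma \ref{lem:sobolev_pullback_preserve_closedness_constant} by exploiting two facts: that the Hodge star on Euclidean space preserves constant forms, and that in the middle degree the Hodge star is conformally invariant. First I would observe that since $\omega$ is a constant $(d/2)$-form on $\R^d$ and $\star_{\text{Eucl}}$ acts fiberwise by a single linear map on $\bigwedge^{d/2}(\R^d)^*$ (depending only on the flat metric), the form $\star\omega$ is again a constant $(d/2)$-form. Hence \lemref{lem:sobolev_pullback_preserve_closedness_constant} (equivalently, \corref{cor:co-closed_into_closed_pullbacks} applied to $\omega$ itself) applies to $\star\omega$: since $s\ge d/2$ and $\deg(\star\omega)=d/2$, the pullback $f^*(\star\omega)$ is weakly closed.

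The second step is to transfer this closedness to $\star f^*\omega$ via the pointwise identity
\[
f^*(\star\omega)=\sgn(\det df)\,\star f^*\omega \quad\text{a.e. on }\Omega.
\]
This is the a.e.\ analogue of \eqref{eq:Tad_star_conf_intemed1}: at any point $p$ where $df_p$ is defined and the conformal equation $df^T df=|\det df|^{2/d}\id$ holds with $\det df_p\ne 0$, we have $f^*\de = \lambda\, \g$ for $\lambda=|\det df_p|^{2/d}>0$, so the middle-degree conformal invariance of the Hodge star gives the identity, with sign equal to $\sgn(\det df_p)$. Since by hypothesis $\sgn(\det df)$ is (a.e.) constant on $\Omega$, this sign is globally $+1$ or globally $-1$. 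Combining with step one, $\star f^*\omega$ is (up to a global sign) equal to the weakly closed form $f^*(\star\omega)$, hence is itself weakly closed.

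For the final assertion, since $\de=\pm\star d\star$ and $\star^2=\pm\id$ on $(d/2)$-forms, weak closedness of $\star f^*\omega$ is the same as weak co-closedness of $f^*\omega$; this is a direct computation against a test form, using that $\star$ acts fiberwise as a constant-coefficient linear map.

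The main obstacle is carefully handling the pointwise identity on the null set $\{\det df=0\}$, where $\star_{f^*\h}$ a priori is not defined by the same formula. The conformality hypothesis rescues us: $df^T df=|\det df|^{2/d}\id$ forces $df_p=0$ whenever $\det df_p=0$, and then both $f^*(\star\omega)_p=(\star\omega)\circ\extp^{d/2}df_p=0$ and $(\star f^*\omega)_p=\star(\omega\circ\extp^{d/2}df_p)=0$, so the identity holds trivially at such points. This matches the remark immediately after \eqref{eq:Tad_star_conf_intemed1} in the smooth case and removes the only point of concern in passing to the weak setting.
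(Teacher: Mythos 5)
Your proposal is correct and follows essentially the same route as the paper: apply \corref{cor:co-closed_into_closed_pullbacks} (equivalently, \lemref{lem:sobolev_pullback_preserve_closedness_constant} to the constant form $\star\omega$) to get $f^*(\star\omega)$ weakly closed, invoke the conformal invariance of $\star$ in middle degree (Eq.~\eqref{eq:Tad_star_conf_intemed1}) to identify this a.e.\ with $\pm\star f^*\omega$, and conclude. Your explicit checks that $\star\omega$ is again constant, that the global sign is fixed by the weak-conformality hypothesis, and that the identity persists on $\{\det df=0\}$ because $df$ vanishes there, are exactly the points the paper records tersely.
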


\propref{prop:conf_pres_co_closed_half} follows from \corref{cor:co-closed_into_closed_pullbacks} , using \eqref{eq:Tad_star_conf_intemed1}.

For clarity, we spell again the argument behind the \emph{preservation of co-closed forms by conformal maps}:

$\omega$ is co-closed $\Rightarrow \star \omega$ is closed $\Rightarrow f^*(\star \omega)=\star f^* \omega$ is weakly closed $\Rightarrow f^* \omega$ is weakly co-closed.

Finally, by combining \lemref{lem:sobolev_pullback_preserve_closedness_constant}  and \propref{prop:conf_pres_co_closed_half}, we get:

\begin{proposition}
\label{prop:conf_pres_closed_co_closed_half}
Let $\omega \in \Omega^{\frac{d}{2}}(\R^d)$ be constant, and let $f\in W^{1,s}(\Omega,\R^d)$ for $s \ge \frac{d}{2} \,$ be weakly conformal. Then $  f^*  \omega$ is weakly closed and co-closed. In particular $  f^*  \omega$ is smooth.
\end{proposition}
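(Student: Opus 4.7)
\textbf{Proof plan for Proposition \ref{prop:conf_pres_closed_co_closed_half}.}
The statement is obtained by simply juxtaposing the two preceding results, and then invoking elliptic regularity to upgrade the conclusion to smoothness. I would organize it in three short steps.

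First, I would apply \lemref{lem:sobolev_pullback_preserve_closedness_constant} with $k=d/2$ and $n=d$. The hypothesis $s\ge k$ of the lemma becomes $s\ge d/2$, which is exactly what is assumed here. The lemma then says directly that $f^*\omega$ is weakly closed; note that conformality is not used at this step, only the Sobolev regularity of $f$ and the fact that $\omega$ is a constant form on $\R^d$. Second, I would invoke \propref{prop:conf_pres_co_closed_half}, whose hypotheses match ours verbatim, to conclude that $f^*\omega$ is weakly co-closed. At this point we already have that $f^*\omega$ is a $W^{1,s}$-integrable $d/2$-form on $\Omega$ which is both weakly closed and weakly co-closed.

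For the final ``In particular'' assertion, I would argue by elliptic regularity. Writing $\Delta=d\de+\de d$ for the Hodge Laplacian on $\Omega^{d/2}(\Omega)$ with the flat Euclidean metric, the identities $d(f^*\omega)=0$ and $\de(f^*\omega)=0$ in the sense of distributions give $\Delta(f^*\omega)=0$ weakly. Component by component in the standard basis of $\extp^{d/2}(\R^d)^*$, this is the scalar Laplace equation
\[
\Delta u=0 \quad\text{(weakly)},
\]
for each coefficient function $u$ of $f^*\omega$. By Weyl's lemma (or, equivalently, by standard interior elliptic regularity for the Laplacian), each such $u$ is smooth on $\Omega$. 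Hence $f^*\omega\in C^{\infty}$.

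There is no real obstacle here: the work has already been done in \lemref{lem:sobolev_pullback_preserve_closedness_constant} and \propref{prop:conf_pres_co_closed_half}. The only point that deserves a sentence of care is the passage from ``weakly closed and weakly co-closed'' to ``smooth''; in flat Euclidean coordinates this reduces, coefficient by coefficient, to Weyl's lemma for $\Delta$, so no Hodge-theoretic machinery beyond scalar elliptic regularity is needed.
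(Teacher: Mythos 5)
Your proposal is correct and follows exactly the paper's own route: combine Lemma~\ref{lem:sobolev_pullback_preserve_closedness_constant} for weak closedness with Proposition~\ref{prop:conf_pres_co_closed_half} for weak co-closedness, then invoke elliptic regularity. The paper just cites ``standard elliptic regularity results'' where you spell out the Weyl's-lemma reduction, but this is the same argument.
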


(The smoothness follows from standard elliptic regularity results).

In  \cite{Iwa93}, The authors use \propref{prop:conf_pres_closed_co_closed_half} as follows:

They take $\omega=dy^{i_1} \wedge dy^{i_2} \wedge \dots \wedge dy^{i_k}$, where $y_i$ are the standard coordinates on $\R^{2k}$. (here $d=2k$). \propref{prop:conf_pres_closed_co_closed_half} implies
\[
f^*\omega=df^{i_1} \wedge df^{i_2} \wedge \dots \wedge df^{i_k}
\]
is smooth. At this point the authors proceed in the following way:
%a \emph{weakly harmonic form}. 

%Clearly, $\de \omega=0$, so $\delta( f^*\omega)=0$ becomes
%\[
%\de(df^{i_1} \wedge df^{i_2} \wedge \dots \wedge df^{i_k})=0.
%\]
%Since $df^{i_1} \wedge df^{i_2} \wedge \dots \wedge df^{i_k}$ is also closed, we deduce that it is a \emph{harmonic form}. 

%Now suppose that $f \in W^{1,d/2}(\Omega,\R^d)$ is weakly conformal and satisfies $\det df \ge 0$ a.e or $\det df \le 0$ a.e.  Then, by a suitable approximation argument, one can easily show that the above argument implies that $df^{i_1} \wedge df^{i_2} \wedge \dots \wedge df^{i_k}$ is weakly closed and co-closed, i.e. it is \emph{weakly harmonic}. By standard elliptic regularity results, it must be smooth.

The  smoothness of $df^{i_1} \wedge df^{i_2} \wedge \dots \wedge df^{i_k}$ implies the smoothness of 
\beq
\label{eq:det_smooth}
(df^1 \wedge df^2 \wedge \dots \wedge df^k) \wedge ( df^{k+1} \wedge \dots \wedge df^{2k})=\det df\, \Vol_{\Omega},
\eeq
i.e., the Jacobian of $f$ is smooth. Since for a conformal map $|df|^d=d^{\frac{d}{2}}\det df$, this implies $|df|^d$ is smooth, and in particular locally integrable. Hence, $f \in W_{\text{loc}}^{1,d}$.

\subsection{Completion of the regularity proof in the Euclidean case}

In the previous subsection, we showed that every $W^{1,d/2}$ weakly conformal map is in $W_{\text{loc}}^{1,d}$. At this point, the authors of \cite{Iwa93} invoke the previous known version of Liouville's theorem, which states that every $W_{\text{loc}}^{1,d}$ weakly conformal map is smooth. 

The last step of the reduction involves a substantial loss of information. The smoothness of all the $d/2$-minors is replaced by one specific corollary---the smoothness of the Jacobian. By doing this, we revert to a non-trivial problem, whose proof is not simple. Using Theorem~\ref{thm:regularity_via_minors_unique_case}, we can now use the smoothness of the minors to deduce that $f$ is smooth, in the case where $d$ is not a multiple of $4$.

To be more precise, using the refined version Theorem \ref{thm:regularity_via_minors_unique_case_a.e._assumptions} we can deduce the following:

\begin{quote}
\emph{
If $f$ is $ W^{1,d/2}_{loc}$-weakly conformal, and either $\det df>0$ a.e. or $\det df<0$ a.e., then $f$ is smooth on an open subset of full measure.
}
\end{quote}

Note that this version of the theorem is slightly weaker than the version stated in the beginning of \secref{sec:intro_Regularity_of_weakly_conformal_maps}:

\begin{itemize}
\item We needed to assume $\det df>0$ instead of $\det df \ge 0$. Indeed, without any assumption on the rank of $f$, information on the minors does not provide any information on $f$. 
\item We only deduce smoothness on an open subset of full measure. (Up to this point; we shall extend the smoothness to the whole domain below).
\item We assumed $d$ is not a multiple of $4$.
\end{itemize}

Of course, if we somehow knew that the (smooth) minors of our map $f$ were everywhere invertible, we could use Theorem \ref{thm:regularity_via_minors_unique_case} and deduce smoothness of $f$ on the entire domain directly.

%Note that we had to require strict inequalities $\det df>0$ rather than $\det df \ge 0$. 

One particular case where this happens is when we have a \emph{quantitative bound} on the invertibility of $df$: 
Suppose that either $\det df \ge c$ a.e. or $\det df \le -c$ a.e. for some $c>0$. Then we conclude $f$ is smooth on all $\Omega$. Indeed, $\det df \ge c$ a.e. implies $\det \extp^k df \ge c^r$ a.e., hence (by smoothness) $\det \extp^k df \ge c^r$ everywhere, so $\extp^k df$ actually lies in $\text{GL}$ everywhere. In this situation Theorem \ref{thm:regularity_via_minors_unique_case} applies verbatim.

We stress again that the slightly weaker consequence here was due to the fact that Theorem \ref{thm:regularity_via_minors_unique_case_a.e._assumptions} gives a weaker corollary. (smoothness on an open subset of full measure instead of the whole domain). Thus, it raises the question whether or not this theorem can be strengthened. We discuss this point in \secref{sec:invertibility_assumption}; we also discuss what happens when $d$ is a multiple of $4$ (where $k,d$ are both even) in \secref{sec:k_d_even_case}.
%\Asaf{I Should  phrase explicitly the version of the conformal regularity theorem that can be proven directly via my minors regularity theorems . }

Even without assuming that the minors are everywhere invertible, we can deduce that $f$ is smooth on the entire domain, using the known expression for $C^{\infty}$ conformal maps between Euclidean domains: (This is another form of Liouville's theorem, see \cite{IM02}, Theorem 2.3.1).

%Let $\Omega \subseteq \mathbb{R}^n$ be an open subset, $n \ge 3$, and let $f:\Omega \to \mathbb{R}^n$ be a smooth conformal map. 
Indeed, with the same notations as above, define 
$\Omega_0=\{ x \in \Omega \, | \, \bigwedge^k df_x \in \text{GL}(\bigwedge^{k}\mathbb{R}^d) \}$. 
$\Omega_0$ is open and has full measure in $\Omega$, and we know, by Theorem \ref{thm:regularity_via_minors_unique_case} that $f|_{\Omega_0}$ is smooth. (This is essentially the proof of Theorem \ref{thm:regularity_via_minors_unique_case_a.e._assumptions} ). By Liouville's theorem, $f|_{\Omega_0}$ has the form 
\[
 f(x)=b+\alpha\frac{1}{|x-a|^\epsilon}A(x-a),
 \]
where $A$ is an orthogonal matrix, and $\epsilon \in \{0,2\}, b \in \mathbb{R}^n,\alpha \in \mathbb{R},a \in \mathbb{R}^n \setminus \Omega_0$.

We shall prove that $f$ is smooth on $\Omega$. The case where $\epsilon=0$ is trivial.
Thus, up to translations, rotations and dilations, the only non-obvious case to handle is 
\beq
\label{eq:conf_formula}
 f(x)=\frac{x}{|x|^2}, \, \, \, 0 \notin \Omega_0.
 \eeq
If $0 \notin \Omega$, \eqref{eq:conf_formula} provides a smooth representative for $f \in  W^{1,d/2}_{loc}$ on all $\Omega$. (recall $\Omega_0$ is dense in $\Omega$).

Assume by contradiction that $0 \in \Omega$. A direct calculation shows that 
\[
df_x(e_i)=\frac{e_i}{|x|^2}-\frac{2x_i x}{|x|^4}.
\]
Thus $|df_x(e_i)| \to \infty$ when $x \to 0$, which implies $|df_x| \to \infty$. In particular, since $f$ is conformal $\det df \simeq |df|^d$ tends to infinity, which contradicts the already established fact that $\det df $ is smooth on $\Omega$. (This follows directly from the smoothness of the $\frac{d}{2}$-minors on $\Omega$, see \eqref{eq:det_smooth}). Thus, $0 \notin \Omega$, and $f$ is smooth.

\section{Regularity of conformal maps between Riemannian manifolds}

\label{sec:Regularit_conformalRiemannianmani}

In this section we shall prove Theorem \ref{thm:reg_conf_Riemann}. 

As mentioned in the introduction (Section \ref{subsec:intro_conf_reg_Riemm}), we shall rely in a crucial manner on the fact that every Riemannian metric locally admits a frame of closed and co-closed forms. Since the proof of this result is technical (and also in a different spirit from the rest of this paper), we shall delay it to Appendix \ref{section:Local_existence_closed_co-closed_frames}.

Similarly to the Euclidean case, we will prove that pullbacks of certain closed and co-closed forms by continuous weakly conformal maps are also weakly closed and co-closed. We shall again need to approximate our weakly conformal map by smooth maps. Unlike the Euclidean case, it is not true that every Sobolev map between manifolds can be approximated by smooth maps when $p < d$. (see \cite{Haj09}). However, every continuous Sobolev map is approximable by smooth maps (see e.g. \cite{Haj09}, Prop 2.2).

We now turn to state and prove the analogue of \lemref{lem:sobolev_pullback_preserve_closedness_constant} for maps between manifolds. Note that on an arbitrary manifold, there is no notion of "constant" form. As mentioned at the end of the proof of \lemref{lem:sobolev_pullback_preserve_closedness_constant}, we need to estimate $\omega_{f(p)}-\omega_{f_n(p)}$. This is where uniform convergence comes into play.

\begin{lemma}
\label{lem:sobolev_pullback_preserve_closedness_uniform_Riemann}
Let $\M$ be a Riemannian manifold, $D \in \mathbb{N}$, and let $f \in W^{1,s}(\M,\R^D)  \cap C^0(\M,\R^D)$, $s \ge k\in \mathbb{N}$. Let $\omega \in \Omega^k(\R^D)$ be closed. Then $f^*\omega$ is  weakly closed. 
\end{lemma}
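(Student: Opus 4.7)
The plan is to mimic the proof of \lemref{lem:sobolev_pullback_preserve_closedness_constant}, replacing the estimate $|f^*\omega - f_n^*\omega| \le |\alpha|\,|\extp^k df - \extp^k df_n|_{op}$ (which relied on $\omega$ being constant) by a triangle-inequality estimate whose extra term is controlled via uniform convergence of smooth approximations to $f$. Since weak closedness is a local condition, I would work on a relatively compact coordinate neighborhood $U \subset \M$; by \cite{Haj09}, Prop.~2.2 the hypothesis $f \in W^{1,s} \cap C^0$ provides smooth approximations $f_n \in C^\infty(\overline{U},\R^D)$ with $f_n \to f$ both in $W^{1,s}(U)$ and uniformly on $\overline{U}$. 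In particular, for $n$ large, $f(\overline{U}) \cup \bigcup_n f_n(\overline{U})$ is contained in a fixed compact set $K \subset \R^D$.

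Each $f_n$ is smooth, so $d(f_n^*\omega) = f_n^*(d\omega) = 0$; hence for every compactly-supported $\sig \in \Omega^{k+1}(U)$,
\[
\int_U \IP{f_n^*\omega}{\de \sig} = \int_U \IP{d f_n^*\omega}{\sig} = 0.
\]
The lemma therefore reduces to showing $f_n^*\omega \to f^*\omega$ in $L^1(U)$, since then passing to the limit under the first integral yields $\int_U \IP{f^*\omega}{\de \sig} = 0$ for every such test form.

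For the $L^1$ convergence the key estimate is
\[
|f^*\omega - f_n^*\omega|(p) \le |\omega_{f(p)} - \omega_{f_n(p)}|\,|\extp^k df_p|_{op} + |\omega_{f_n(p)}|\,|\extp^k df_p - \extp^k (df_n)_p|_{op}.
\]
The second summand is handled exactly as in \lemref{lem:sobolev_pullback_preserve_closedness_constant}: $|\omega|$ is bounded on the compact set $K$, and $\int_U|\extp^k df - \extp^k df_n| \to 0$ by \lemref{lem:Sobolev_approximation_lifts_to_exterior_powers}. For the first summand, smoothness of $\omega$ on $K$ yields a uniform Lipschitz bound, so that $\sup_{p \in \overline{U}}|\omega_{f(p)} - \omega_{f_n(p)}| \le C\,\|f - f_n\|_{L^\infty(U)} \to 0$; this is combined with $|\extp^k df| \in L^1(U)$, which holds because $f \in W^{1,s}$ with $s \ge k$.

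The main obstacle is the first summand, and it is precisely what forces the continuity hypothesis: without uniform convergence of $f_n$ to $f$, the difference $\omega_{f(p)} - \omega_{f_n(p)}$ cannot be made small in any norm adapted to the problem, and pullback by $W^{1,s}$-convergent sequences fails to be continuous. Continuity of $f$ is what upgrades the Sobolev approximation to a uniform one and thereby tames this term.
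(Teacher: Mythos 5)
Your proposal is correct and takes essentially the same approach as the paper: the same triangle-inequality decomposition of $|f^*\omega - f_n^*\omega|$, the same use of uniform convergence (available via continuity and the Haj\l{}asz approximation) to control $|\omega_{f(p)}-\omega_{f_n(p)}|$ paired with $|\extp^k df| \in L^1$, and the same appeal to \lemref{lem:Sobolev_approximation_lifts_to_exterior_powers} for the remaining term. The only cosmetic difference is that the paper localizes by shrinking the codomain (and then the domain) to ensure finite $C^1$-norm of $\omega$, whereas you localize on a relatively compact coordinate patch and use compactness of the image; both achieve the same uniform Lipschitz control on $\omega$.
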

%\begin{lemma}
%\label{lem:sobolev_pullback_preserve_closedness_uniform_Riemann}
%Let $\M$ be a Riemannian manifold, $D \in \mathbb{N}$, and let $f \in W^{1,s}(\M,\R^D)  \cap C^0(\M,\R^D)$, $s \ge k\in \mathbb{N}$. Let $\omega \in \Omega^k(\R^D)$ be a closed $k$-form with finite $C^1$ norm (i.e. $|T\omega|_{sup},|\omega|_{sup} < \infty$). Then $f^*\omega$ is  weakly closed. 
%\end{lemma}
\begin{proof}
First, we note that being weakly closed is a local property (this follows by a partition of unity argument). Thus, we can restrict the codomain to be an arbitrary open subset (this will force restriction of the domain also; this is always possible due the continuity of $f$).

\emph{In particular, we can assume without loss of generality that $\omega$ has finite $C^1$ norm (i.e. $|T\omega|_{sup},|\omega|_{sup} < \infty$). }

Since $f$ is continuous, there exist an approximating sequence $f_n \in C^{\infty}(\M,\R^D)$, $f_n \to f$ in $W^{1,s}(\M,\R^D)$, which converges uniformly to $f$. The rest of the proof is identical to the proof of \lemref{lem:sobolev_pullback_preserve_closedness_constant}, except that we need to handle Estimate \eqref{eq:pullback_diff_est1} with more care (see \eqref{eq:pullback_diff_est2} to understand the subtlety):
\beq
\label{eq:pullback_diff_est3}
\begin{split}
& |f^*  \omega-f_n^*  \omega|(p)= \\
&\left|\omega_{f(p)} \circ \extp^{k} df_p -\omega_{f_n(p)} \circ \extp^{k} (df_n)_p \right| \le \\
&\left|\brk{\omega_{f(p)}-\omega_{f_n(p)}} \circ \extp^{k} df_p +\omega_{f_n(p)} \circ \brk{ \extp^{k} df_p-\extp^{k} (df_n)_p} \right| \le \\
&|\omega_{f(p)}-\omega_{f_n(p)}| \, \, \cdot \, \, \left| \extp^{k} df_p\right| +|\omega_{f_n(p)} | \, \, \cdot \, \, \left| \extp^{k} df_p-\extp^{k} (df_n)_p \right| \le \\
&|T\omega|_{sup} \,  \cdot  \, |f(p)-f_n(p)| \,  \cdot  \, \left| \extp^{k} df_p\right| +|\omega |_{sup} \, \, \cdot \, \, \left| \extp^{k} df_p-\extp^{k} (df_n)_p \right| \le \\
&|T\omega|_{sup} \,  \cdot  \, |f-f_n|_{sup} \,  \cdot  \, \left| \extp^{k} df_p\right| +|\omega |_{sup} \, \, \cdot \, \, \left| \extp^{k} df_p-\extp^{k} (df_n)_p \right| \stackrel{L^1}{\to} 0,
%&|\alpha \circ  \brk{\extp^{k} df_p-\extp^{k} (df_n)_p} | \le |\alpha| \cdot |\extp^{k} df_p-\extp^{k} (df_n)_p|_{op}.
\end{split}
\eeq
where the first summand tends to zero in $L^1$  by the uniform convergence, and the second tends to zero by \lemref{lem:Sobolev_approximation_lifts_to_exterior_powers}.
\end{proof}

Even though \lemref{lem:sobolev_pullback_preserve_closedness_uniform_Riemann} is an improvement compared to \lemref{lem:sobolev_pullback_preserve_closedness_constant}, we still
need a more generalized version of it, which allows for arbitrary target manifolds. The naive problem with adapting the proof above is that between manifolds, expression like
\[
\omega_{f_n(p)} \circ \extp^{k} df_p
\]
are not defined. In order to carry out the generalization, we use a technical lemma:

\begin{lemma}
\label{lem:smooth_ext_close_form_embedd}
Let $\M$ be a Riemannian manifold. 
Let $\omega \in \Omega^k(\M)$ be a smooth closed form, and let $i:\M \to \N$ be a smooth embedding. Then we can extend $\omega$ locally to a closed form on $\N$. More precisely, let $q \in \M$;  there exist a smooth closed $k$-form $\til \omega$ defined on some open neighbourhood of $i(q) \in \N$, such that $i^* \til \omega=\omega$. %, and $|\til \omega|_{C^1}  < \infty$.%$|\til \omega|_{C^1} \le | \omega|_{C^1}$. 
\end{lemma}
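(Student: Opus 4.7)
The plan is to produce the extension directly via a local retraction onto $i(\M)$, so that closedness is inherited essentially for free. The cleanest way to do this is via the tubular neighbourhood theorem: since $i:\M \to \N$ is a smooth embedding, there exists an open neighbourhood $T$ of $i(\M)$ in $\N$ together with a smooth retraction $\pi:T \to i(\M)$ satisfying $\pi \circ i = \id_{\M}$ (identifying $\M$ with $i(\M)$). I then simply define
\[
\til\omega := \pi^* \omega \in \Omega^k(T).
\]
The desired properties fall out of the functoriality of pullback: $i^* \til\omega = i^* \pi^* \omega = (\pi \circ i)^* \omega = \omega$, and $d\til\omega = d \pi^* \omega = \pi^* d\omega = 0$. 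Restricting $T$ to an open neighbourhood of $i(q)$ gives the local extension required by the lemma.

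If one prefers to avoid invoking the tubular neighbourhood theorem, a purely coordinate-based argument also works. By the local slice theorem for embeddings, near $i(q)$ one can choose coordinates $(x^1,\dots,x^m,y^1,\dots,y^{n-m})$ on some open $U \subseteq \N$ such that $i(\M) \cap U = \{y=0\}$. Writing $\omega$ in these slice coordinates as $\omega = \sum_{|I|=k} a_I(x)\, dx^I$, I would define
\[
\til\omega := \sum_{|I|=k} a_I(x)\, dx^I \quad \text{on } U,
\]
where the coefficients $a_I$ are now regarded as functions of $(x,y)$ that are constant in the $y$-variable. Then $i^*\til\omega=\omega$ is immediate, and a direct computation gives $d\til\omega = \sum_{I,j} \partial_{x^j} a_I(x)\, dx^j\wedge dx^I$, since the $\partial_{y^l}$-derivatives vanish by construction. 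But the coefficients of $d\til\omega$ coincide with the (vanishing) coefficients of $d\omega$ on $\M$, since both are identically $\sum_I \partial_{x^j} a_I(x)$ and involve no $dy^l$; hence $d\til\omega = 0$.

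I do not anticipate any genuine obstacle here: the entire content of the lemma is captured by the existence of a smooth retraction from a neighbourhood of $i(\M)$ onto $i(\M)$, a standard consequence of the embedding hypothesis. The only mild subtlety is to ensure that the closedness is preserved, but this is automatic from $d \circ \pi^* = \pi^* \circ d$ in the first approach, or from the fact that the $y$-coordinates do not enter the coefficients $a_I$ in the second.
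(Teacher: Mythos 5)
Your proof is correct. Your second, coordinate-based argument is essentially identical to the paper's: the paper invokes the constant rank theorem to pass to slice coordinates $(x,y)$ with $i(\M)=\{y=0\}$, writes $\omega=\sum_I a_I(x)\,dx^I$, and extends by declaring the coefficients constant in $y$; you do exactly the same. Your first argument --- defining $\til\omega=\pi^*\omega$ for a tubular-neighbourhood retraction $\pi$ --- is a genuinely cleaner alternative. It packages the whole computation into the functoriality identities $i^*\pi^*=(\pi\circ i)^*=\id$ and $d\circ\pi^*=\pi^*\circ d$, so closedness comes for free rather than from a coordinate computation, and there is no need to verify anything about the extended coefficients. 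The coordinate route is more elementary (it needs only the rank theorem, not the tubular neighbourhood theorem) but slightly more ad hoc; the retraction route is more conceptual and, incidentally, produces a global extension on the whole tube rather than just a local one, though for the lemma as stated both suffice. One small point worth being explicit about if you use the tubular-neighbourhood version: you only need a local tube near $i(q)$, which exists for any embedding without any properness or closedness hypothesis, so there is no hidden assumption here.
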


\begin{proof}
By the constant rank theorem, we can assume $\M=\R^m,\N=\R^n,q=0$ and  $i(x_1,\dots,x_m)=(x_1,\dots,x_m,0\dots,0)$. We define $\til \omega$ as follows:
First, we define it on the $i(\M)$:
\[
\brk{ \til \omega(x_1,\dots,x_m,0,\dots,0)}_{i_1,\dots,i_k}=\brk{  \omega(x_1,\dots,x_m)}_{i_1,\dots,i_k},
\]
if $1 \le i_1 < i_2 < \dots < i_k \le m$, and $\brk{ \til \omega(x_1,\dots,x_m,0,\dots,0)}_{i_1,\dots,i_k}=0$ otherwise.
Then we extend $\til \omega$ to be constant along the directions "orthogonal" to $\M$:
\[
\til \omega(x_1,\dots,x_m,x_{m+1},\dots,x_n)=\til \omega(x_1,\dots,x_m,0,\dots,0).
\]
It is now straightforward to verify that $\til \omega$ satisfies the requirements.
\end{proof}

We are now ready to prove our generalized version of  \lemref{lem:sobolev_pullback_preserve_closedness_uniform_Riemann} between manifolds:
\begin{lemma}
\label{lem:sobolev_pullback_preserve_closedness_uni_man}
Let $\M,\N$ be Riemannian manifolds. (We allow $\dim \M \neq \dim \N$).
Let $f \in W^{1,s}(\M,\N) \cap C^0(\M,\N) $, $s \ge k\in \mathbb{N}$. Let $\omega \in \Omega^k(\N)$ be closed. Then $f^*\omega$ is weakly closed. 
%Then $f$ pullbacks closed $k$-forms into weakly closed forms.
\end{lemma}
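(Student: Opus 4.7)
The plan is to reduce to \lemref{lem:sobolev_pullback_preserve_closedness_uniform_Riemann} by locally embedding the target $\N$ into a Euclidean space and using \lemref{lem:smooth_ext_close_form_embedd} to extend $\omega$ through that embedding. The main technical hurdle is chaining a Sobolev map with a smooth embedding while keeping track of pullbacks of forms; fortunately this works out because the embedding is smooth.

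First, I would observe that weak closedness of $f^*\omega$ on $\M$ is a local property (a standard partition-of-unity argument). It therefore suffices to fix an arbitrary $p_0 \in \M$ and show that $f^*\omega$ is weakly closed on some open neighborhood $U$ of $p_0$. Using continuity of $f$, I shrink $U$ so that $f(\overline U)$ lies in the domain of a coordinate chart $V \subseteq \N$ around $q_0 := f(p_0)$, and I fix a smooth embedding $i : V \hookrightarrow \R^D$ of that chart into some Euclidean space (e.g.~a chart composed with an inclusion $\R^{\dim \N} \hookrightarrow \R^D$).

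Second, I would apply \lemref{lem:smooth_ext_close_form_embedd} to the closed form $\omega|_V$ together with the embedding $i$; after possibly shrinking $V$ (and hence $U$), this produces an open neighborhood $W \subseteq \R^D$ of $i(q_0)$ and a smooth closed form $\til\omega \in \Omega^k(W)$ satisfying $i^*\til\omega = \omega$ on $V$. I then consider the composition
\[
g := i \circ f|_U : U \to W \subseteq \R^D.
\]
Because $i$ is smooth with locally bounded derivative and $f \in W^{1,s}(\M,\N) \cap C^0$, the map $g$ lies in $W^{1,s}(U,\R^D) \cap C^0(U,\R^D)$; this is exactly the setting in which \lemref{lem:sobolev_pullback_preserve_closedness_uniform_Riemann} applies, yielding that $g^*\til\omega$ is weakly closed on $U$.

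Third, I would invoke the chain rule for pullbacks: since $i$ is smooth, the weak differential satisfies $dg_p = di_{f(p)} \circ df_p$ for a.e.~$p \in U$, so
\[
g^* \til\omega \;=\; f^*\brk{i^*\til\omega} \;=\; f^*\omega \quad \text{a.e.~on } U.
\]
Therefore $f^*\omega$ is weakly closed on $U$, and since $p_0$ was arbitrary this completes the proof. The step I expect to require the most care is the chain-rule identity $g^*\til\omega = f^*(i^*\til\omega)$ in the Sobolev setting, but it follows at the pointwise level from the almost-everywhere identity $d(i\circ f) = di \circ df$, which is standard for compositions of Sobolev maps with smooth maps. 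Notably, no smoothing of $f$ or reapproximation is needed at this stage—the hard analytic work has already been discharged inside \lemref{lem:sobolev_pullback_preserve_closedness_uniform_Riemann}.
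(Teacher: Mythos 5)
Your proof takes essentially the same route as the paper's: both embed (a neighborhood of) $\N$ into $\R^D$, invoke \lemref{lem:smooth_ext_close_form_embedd} to produce a closed extension $\til\omega$ with $i^*\til\omega = \omega$, and then apply \lemref{lem:sobolev_pullback_preserve_closedness_uniform_Riemann} to the composition $i \circ f$, concluding via the a.e.\ chain rule $(i\circ f)^*\til\omega = f^*(i^*\til\omega) = f^*\omega$. The one substantive difference is the choice of embedding: the paper takes $i:\N\to\R^D$ to be a global isometric (Nash) embedding, so that $i\circ f\in W^{1,s}(\M,\R^D)$ is immediate from the extrinsic definition of $W^{1,s}(\M,\N)$, whereas you use a local chart-based embedding after shrinking $U$, which requires the small extra verification that postcomposing a Sobolev map with a smooth map of locally bounded derivative preserves membership in $W^{1,s}$ on the shrunken domain. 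Your localized version is in fact slightly more careful than the paper's on one point: \lemref{lem:smooth_ext_close_form_embedd} only furnishes a \emph{local} extension $\til\omega$ near each $i(q)$, so the paper's phrasing ``a smooth closed form $\til\omega$ on $\R^D$'' is an overstatement unless one first localizes using the fact that weak closedness is a local property, which is exactly what your first step does explicitly.
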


\begin{proof}
Let $i:\N \to \R^D$ be a smooth isometric embedding. By 
\lemref{lem:smooth_ext_close_form_embedd}, there exists a smooth closed form $\til \omega$ on $\R^D$ such that $i^*\til \omega= \omega$. Define $\til f=i \circ f:\M \to \R^D$. Then, by the definition of $W^{1,s}(\M,\N)$, $\til f \in W^{1,s}(\M,\R^D)$. Thus, by 
\lemref{lem:sobolev_pullback_preserve_closedness_uniform_Riemann},
\[
\til f^* \til \omega=(i \circ f)^* \til \omega=f^* (i^*\til \omega)=f^*\omega
\]
is weakly closed. 
\end{proof}
%First, we note that being weakly closed is a local property (this follows by a partition of unity argument).
%Let $p \in \M$, and let $U \subseteq \M, V \subseteq \N $ be some open neighbourhoods of $p,f(p)$ respectively, such that $f(U) \subseteq V$. Since the claim is local, we may now consider $f$ as a map $U \to V$.

%Let $i:V \to \R^D$ be a smooth isometric embedding. By \lemref{lem:smooth_ext_close_form_embedd}, there exist a smooth closed form $\til \omega$ on $\R^D$ such that $i^*\til \omega= \omega$. Define $\til f=i \circ f:U \to \R^D$. Then, by the definition of $W^{1,s}(\M,\N)$, $\til f \in W^{1,s}(U,\R^D)$. Thus, by

At this point we have at our disposal \lemref{lem:sobolev_pullback_preserve_closedness_uni_man} which is the analogue of \lemref{lem:sobolev_pullback_preserve_closedness_constant} for maps between manifolds.

From this point we can repeat the rest of the argument, regarding the pullback property of weakly conformal maps. We spell again the idea:

$\omega$ is co-closed $\Rightarrow \star \omega$ is closed $\Rightarrow f^*(\star \omega)=\star f^* \omega$ is weakly closed $\Rightarrow f^* \omega$ is weakly co-closed.

Combining  this reasoning with \lemref{lem:sobolev_pullback_preserve_closedness_uni_man} we finally obtain the following result, which is the analogue of \propref{prop:conf_pres_closed_co_closed_half}

\begin{corollary}
\label{cor:closed_and_co-closed_pullbacks_preserve}
Let $\M,\N$ be $d$-dimensional Riemannian manifolds ($d$ even).
Let $f\in W^{1,s}(\M,\N) \cap C^0(\M,\N) \, $ for $\,  s \ge \frac{d}{2} \,$ be weakly conformal, and let $\omega \in \Omega^{\frac{d}{2}}(\N)$ be a locally-defined closed and co-closed form. Then $  f^*  \omega$ is weakly closed and co-closed. In particular $  f^*  \omega$ is smooth.
\end{corollary}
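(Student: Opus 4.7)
The plan is to mimic the argument sketched in the Euclidean case (the chain: $\omega$ co-closed $\Rightarrow \star \omega$ closed $\Rightarrow f^*(\star \omega) = \pm \star f^* \omega$ weakly closed $\Rightarrow f^* \omega$ weakly co-closed), but to justify each arrow in the Riemannian setting using the tools just established. First, I would apply \lemref{lem:sobolev_pullback_preserve_closedness_uni_man} directly to the closed form $\omega$: since $f \in W^{1,s}(\M,\N) \cap C^0(\M,\N)$ with $s \ge d/2 = k$, we obtain immediately that $f^*\omega$ is weakly closed.

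Next, I would handle the co-closedness. Since $\omega$ is co-closed, $\star_\h \omega$ is a closed $(d/2)$-form on (an open subset of) $\N$. Applying \lemref{lem:sobolev_pullback_preserve_closedness_uni_man} to $\star_\h \omega$ yields that $f^*(\star_\h \omega)$ is weakly closed. The crucial step is now to identify $f^*(\star_\h \omega)$ with $\pm \star_\g f^* \omega$, i.e.\ to invoke the conformal invariance of the Hodge star in middle dimension in the weak setting. Pointwise on the full-measure set where $f$ is differentiable and satisfies $f^* \h = \lambda \g$ with $\lambda \ge 0$ (here $\lambda = |\det df|^{2/d}$), the computation \eqref{eq:Tad_star_conf_intemed1} yields
\[
f^*(\star_\h \omega) = \pm \star_{f^*\h} f^* \omega = \pm \star_{\lambda \g} f^* \omega = \pm \star_\g f^* \omega,
\]
where the middle equality uses that $\star$ at degree $d/2$ is unchanged by conformal rescaling of the metric, and the sign is constant on connected components by the orientation assumption in Definition \ref{def:weakly_conformal}. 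At points where $\lambda = 0$ (i.e.\ $df = 0$) both sides vanish. This pointwise identity lifts to an identity of $L^1$ sections, hence of distributions.

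Combining the two steps, $\star_\g f^* \omega = \pm f^*(\star_\h \omega)$ is weakly closed, so by definition $f^* \omega$ is weakly co-closed. Thus $f^* \omega$ is a $W^{1, d/2}$-regular $(d/2)$-form on $\M$ which is both weakly closed and weakly co-closed; in other words it lies in the kernel of the Hodge--de Rham Laplacian in the distributional sense. Standard elliptic regularity for the Laplacian on forms (the Laplacian is elliptic with smooth coefficients since $\g$ is smooth) then gives that $f^* \omega$ is smooth.

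The main obstacle I anticipate is making the conformal-invariance identity $f^*(\star_\h \omega) = \pm \star_\g f^* \omega$ rigorous at the Sobolev level: one must check that the pointwise a.e.\ equality, valid wherever $df$ exists and the conformal relation holds, is enough to conclude the equality as distributions (or equivalently as $L^1_{\mathrm{loc}}$ forms). This is a measure-theoretic routine once one observes that both sides are bounded in $L^1_{\mathrm{loc}}$ by a constant multiple of $|\extp^{d/2} df| \in L^1_{\mathrm{loc}}$ (using $s \ge d/2$ and \lemref{lem:Sobolev_approximation_lifts_to_exterior_powers}), and that the orientation hypothesis fixes the sign globally on each connected component.
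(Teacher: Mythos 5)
Your proposal is correct and follows exactly the paper's own argument: apply \lemref{lem:sobolev_pullback_preserve_closedness_uni_man} once to $\omega$ for weak closedness, and once to $\star_{\h}\omega$, combined with the conformal invariance $f^*(\star_{\h}\omega) = \pm\star_{\g}f^*\omega$ in middle degree, for weak co-closedness. You are slightly more careful than the paper in spelling out why the pointwise a.e.\ Hodge-star identity lifts to an identity of $L^1_{\mathrm{loc}}$ forms (the paper treats this as implicit), but the approach is the same.
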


%\corref{cor:closed_and_co-closed_pullbacks_preserve} will now serve as our "starting point" in the proof of the following proposition: \Asaf{That is a bad phrasing...}
We shall now prove the following proposition: % about maps having such property, namely:

\begin{lemma}
\label{lem:cont_minors_Riem}
Suppose $f \in W^{1,p}(\M,\N) \cap C^0(\M,\N)$ have the following property:  for any locally-defined closed and co-closed $\omega \in \Omega^k(\N)$,  $f^*\omega$ is smooth. Then the $k$-minors of $df$ are continuous.
\end{lemma}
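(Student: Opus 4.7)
The plan is to combine the local existence of closed and co-closed frames on $\N$ (Appendix \ref{section:Local_existence_closed_co-closed_frames}) with the continuity of $f$ in order to recover the $k$-minors of $df$ by inverting a continuously varying linear system whose right-hand side is smooth.

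Fix $p_0 \in \M$ and pick coordinate charts $(U, x^1,\dots,x^d)$ around $p_0$ and $(V, y^1,\dots,y^d)$ around $f(p_0)$ with $f(U) \subseteq V$ (possible by continuity of $f$). After shrinking $V$ if necessary, the appendix provides a frame $\omega_1,\dots,\omega_N$ (with $N := \binom{d}{k}$) of $\bigwedge^k T^*V$ consisting of smooth closed and co-closed forms. Expanding in the coordinate co-frame,
\[
\omega_i|_q = \sum_{|J| = k} a_{iJ}(q)\, dy^J,
\]
the matrix $A(q) := (a_{iJ}(q))$ is smooth on $V$ and invertible at every point (since the $\omega_i$ form a frame).

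By hypothesis each $f^*\omega_i$ is smooth on $U$, so there are smooth functions $b_{iI} \in C^\infty(U)$ with $f^*\omega_i = \sum_I b_{iI}(p)\, dx^I$. On the other hand, expanding $f^*dy^J = df^{j_1} \wedge \cdots \wedge df^{j_k}$ in the coordinate co-frame yields $f^*dy^J = \sum_I m^J_I(p)\, dx^I$, where $m^J_I(p) = \det(\partial f^{j_a}/\partial x^{i_b})$ is precisely the $(J,I)$ $k$-minor of $df_p$. Linearity of pullback gives the a.e.\ identity
\[
b_{iI}(p) = \sum_J a_{iJ}(f(p))\, m^J_I(p) \qquad \text{a.e.\ in } U.
\]
For each fixed $I$, this is the linear system $A(f(p)) \cdot (m^J_I(p))_J = (b_{iI}(p))_i$. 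Since $A$ is smoothly invertible on $V$, the matrix $A^{-1}$ is smooth on $V$; continuity of $f$ then implies that $A(f(p))^{-1}$ depends continuously on $p \in U$. Inverting the system yields
\[
m^J_I(p) = \sum_i \bigl(A(f(p))^{-1}\bigr)_{Ji}\, b_{iI}(p) \qquad \text{a.e.\ in } U,
\]
whose right-hand side is continuous on $U$. Hence each $k$-minor of $df$ has a continuous representative on $U$, and since $p_0$ was arbitrary, the $k$-minors are continuous on all of $\M$.

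The only non-trivial ingredient is the local existence of a closed and co-closed frame on $\N$, which is exactly what lets us invoke the hypothesis; once this is granted the argument is pure linear algebra, with continuity propagating from the $\N$-dependent data $A^{-1}$ to the $p$-dependent data $A(f(p))^{-1}$ via continuity of $f$. I expect this last propagation step to be the conceptual crux: without continuity of $f$ one only knows that the coefficients $a_{iJ}(f(p))$ are defined a.e., and the system cannot be inverted pointwise.
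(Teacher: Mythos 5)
Your proof is correct and follows essentially the same approach as the paper's: invoke the local existence of a closed and co-closed frame (Theorem~\ref{thm:existence_of_harmonic_frames}), transfer smoothness from the pullbacks $f^*\omega^J$ to the coordinate-form pullbacks $f^*dy^J$ (which encode the $k$-minors) via a change of basis composed with the continuous $f$, and conclude. The only difference is cosmetic: the paper expands the coordinate forms directly in terms of the frame, $dx^I = a^I_J\,\omega^J$, and reads off the minors without any explicit inversion, whereas you expand $\omega_i = \sum_J a_{iJ}\,dy^J$ and then invert the matrix $A(f(p))$; the two are of course equivalent.
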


\begin{proof}
Let $p \in \M$, and let $x_i$ be local coordinates on $\N$ around $f(p)$.

For a given increasing multi-index $I=(i_1,\dots,i_k)$, we write
\[
dx^I=dx^{i_1} \wedge dx^{i_2} \wedge \dots \wedge dx^{i_k}.
\]
$dx^I$ is a local $k$-form on $\N$. According to Theorem \ref{thm:existence_of_harmonic_frames}, there exist a local frame $\omega^J$  for $\bigwedge^k T^*\N$ around $f(p)$, consisting of closed and co-closed forms.

Since the $\omega^J$ form a frame, we can write, in some neighbourhood $U \subseteq \N$ around $f(p)$,
\[
dx^I=a_J^I\omega^J, \, \, \text{ where } \, \, a_J^I \in C^{\infty}(U).
\] 
Let $(f^i)_{1 \le i \le d}$ be the component functions of $f$ w.r.t the coordinates $x^i$. Write $df^I:=df^{i_1} \wedge df^{i_2} \wedge \dots \wedge df^{i_k}$. Then
\beq
\label{eq:recursive_reg_eq}
df^I=f^*dx^I=(a_J^I\circ f)f^*\omega^J.
\eeq

Since $f^*\omega^J$ is smooth and $f$ is continuous, equation \eqref{eq:recursive_reg_eq} implies $df^I$ is continuous on $f^{-1}(U)$. Thus, we have shown that the map $x \to  \bigwedge^k df_x$ is continuous on $f^{-1}(U)$. Since continuity is a local property, it follows that $x \to  \bigwedge^k df_x$ is continuous on $\M$. 
\end{proof}

\begin{comment} 
Let $\M,\N$ be $d$-dimensional Riemannian manifolds ($d$ even), and let $f \in W^{1,\frac{d}{2}}(\M,\N) \cap C^0(\M,\N)$ be a weakly conformal map. By \corref{cor:closed_and_co-closed_pullbacks_preserve} $f$ satisfies the assumptions of \lemref{lem:cont_minors_Riem} for $k=\frac{d}{2}$. So, we have that
\[
(df^1 \wedge df^2 \wedge \dots \wedge df^{\frac{d}{2}}) \wedge (df^{\frac{d}{2}+1} \wedge  \dots \wedge df^{d})=\det df \, \Vol_{\Omega},
\]
is continuous. As before, conformality implies $|df|^d=d^{\frac{d}{2}}|\det df|$, so $|df|^d$ is continuous, and so locally integrable. Hence, $f \in W_{\text{loc}}^{1,d}$, which implies $f$ is smooth on the whole domain by previous results. (In this regularity the theorem is known). This proves Theorem \ref{thm:reg_conf_Riemann}. 

%Note that up to this point we did not use the assumption that $k,d$ are not both even, so Theorem \ref{thm:reg_conf_Riemann} holds for every even $d$. (We also did not use the assumption $\det df > 0 $, so $\det df \ge 0 $ or  $\det df \le 0 $ suffices). 

Since the proofs of the conformal regularity in the $W^{1,d}$-regime are not easy, we shall now show how equation \eqref{eq:recursive_reg_eq}  can be used directly to obtain smoothness of $W^{1,\frac{d}{2}}$ conformal maps on an open subset of full measure (when assuming $\det df>0$ a.e. instead of $\det df \ge 0$ a.e). We phrase this result in a slightly more general form:
%(i.e. to prove \propref{prop:Riem_reg_harmonic_morphism_smooth}). 
%\Asaf{Perhaps the last comment should be in the introduction to this section/subsection?}
%\Asaf{Do we need here the fact $d$ is not a multiple of $4$?...or since $f$ and hence $df$ are continuous, everything works fine? Anyway, this all paragraph should probably be in the intro to this section..}

\end{comment}

\begin{proposition}
\label{prop:Riem_reg_harmonic_morphism_smooth}
Let $d \in \mathcal{N}$, and fix a natural number $1\le k < d$; suppose that $k,d$ are not both even.  
Let $\M,\N$ be $d$-dimensional Riemannian manifolds. 

 Suppose $f \in W^{1,p}(\M,\N) \cap C^0(\M,\N)$ satisfies $\det df > 0$ a.e or $\det df < 0$ a.e, and also have the following property:  for any locally-defined closed and co-closed $\omega \in \Omega^k(\N)$,  $f^*\omega$ is smooth. Then $f$ is smooth on an open set of full measure in $\M$.
If $k$ is odd, the assumption on $\text{sgn} (\det df)$ can be omitted; i.e. it suffices to assume  $\det df \neq 0$ a.e.
\end{proposition}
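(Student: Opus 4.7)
The plan is to follow the strategy used in the proof of Theorem \ref{thm:regularity_via_minors_unique_case}, with the crucial modification that smoothness of $\bigwedge^k df$ is not available \emph{a priori}, but obtained \emph{a posteriori} via a bootstrap that exploits equation \eqref{eq:recursive_reg_eq}.

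First, I would invoke Lemma \ref{lem:cont_minors_Riem}, whose hypotheses coincide with those of the present proposition, to obtain that $x \mapsto \bigwedge^k df_x$ is continuous. Setting
\[
\M_0 := \{x \in \M : \bigwedge^k df_x \text{ is invertible}\},
\]
continuity makes $\M_0$ open, and the non-vanishing of $\det df$ almost everywhere (guaranteed either by the sign hypothesis or, in the odd-$k$ case, by the weaker assumption $\det df \neq 0$ a.e.) makes $\M_0$ of full measure in $\M$.

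Next, I would work in local charts on $\M_0$ and around $f(\M_0)$, so that $f$ becomes a continuous map between open subsets of $\R^d$. Under the sign assumption, say $\det df > 0$ a.e., the continuous map $\bigwedge^k df$ takes values in $\psi(\GL^+(\R^d))$ almost everywhere; by continuity together with the closedness of $\psi(\GL^+(\R^d))$ in $\GL(\bigwedge^k \R^d)$ (Corollary \ref{cor:image_exterior_map_is_closed_embedded}), this holds everywhere on the chart. Since $\psi:\GL^+(\R^d) \to \psi(\GL^+(\R^d))$ is a diffeomorphism, the composition $df = \psi^{-1} \circ \bigwedge^k df$ is continuous on $\M_0$, hence $f \in C^1(\M_0)$.

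The bootstrap then proceeds from equation \eqref{eq:recursive_reg_eq}: whenever $f$ is $C^r$ on an open subset of $\M_0$, the composition $a_J^I \circ f$ is $C^r$, so the identity $df^I = (a_J^I \circ f)\, f^*\omega^J$ combined with the smoothness of $f^*\omega^J$ forces $\bigwedge^k df \in C^r$. Applying the smooth map $\psi^{-1}$ then gives $df \in C^r$, whence $f \in C^{r+1}$. Induction on $r$ promotes the initial $C^1$ regularity to $C^\infty$ on $\M_0$. For odd $k$, Corollaries \ref{cor:exterior_map_properties} and \ref{cor:image_exterior_map_is_closed_embedded} show that $\psi$ is an embedding of all of $\GL(\R^d)$ with closed image, and the entire argument runs verbatim with $\GL^+$ replaced by $\GL$ and no sign hypothesis required.

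The main obstacle I anticipate is the initial passage from continuity of $\bigwedge^k df$ to continuity of $df$: this inversion rests on the closed embedding property of $\psi$, which is precisely where the hypothesis that $k,d$ are not both even enters. The bootstrap step also depends essentially on having genuinely smooth $f^*\omega^J$ as a ``regularity reservoir''---with merely continuous $f^*\omega^J$, the induction would not get off the ground.
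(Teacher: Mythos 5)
Your proposal matches the paper's proof in its essential structure: both invoke Lemma \ref{lem:cont_minors_Riem} to get continuity of $\bigwedge^k df$, define the open full-measure set where $\bigwedge^k df$ is invertible, use the closed-embedding property of $\psi$ from Corollary \ref{cor:image_exterior_map_is_closed_embedded} to recover continuity of $df$, and then bootstrap via the two-part system of equation \eqref{eq:recursive_reg_eq} ($df^I = (a_J^I\circ f)f^*\omega^J$ together with $df = \psi^{-1}(\bigwedge^k df)$) to promote $C^r$ to $C^{r+1}$ inductively. The only presentational difference is that you establish $C^1$ explicitly before starting the induction, whereas the paper begins the induction directly at $m = 0$; the content is identical.
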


\begin{comment}
%\begin{enumerate}
%\item
%\propref{prop:Riem_reg_harmonic_morphism_smooth} and \corref{cor:closed_and_co-closed_pullbacks_preserve}  imply a weak form of Theorem \ref{thm:reg_conf_Riemann}- namely that every continuous weakly $W^{1,\frac{d}{2}}$ conformal map between manifolds is smooth on an open set of full measure. However, we shall see along the proof that we can deduce smoothness on the entire domain, based on previous results in the $W^{1,d}$ regime.
%\item 
We note again that some non-degeneracy requirement on the rank of $df$ is certainly required; if $\rank df <k $ a.e. then $f^*\omega=0$ for any $k$-form, so the other assumptions hold but the conclusion may not.
%\end{enumerate}
\end{comment}

\begin{proof}
Since the claim is local, we can assume  that $\M,\N$ are homeomorphic to $\R^d$; The problem is then reduced to establishing the smoothness of continuous Sobolev maps $ (\R^d,\g)\to (\R^d,\h)$. (i.e. the difference between this problem and the Euclidean one is the metric structure, not the topological one). So, we can regard $df$ as a map $\R^d \to \R^d$.

We now treat the case where $k$ is odd and $\det df \neq 0$ a.e.. The alternative case, where $k$ is even, $d$ is odd, and $\det df > 0$ a.e or $\det df < 0$ a.e is treated analogously (we are only using the possibility to "reconstruct $df$ from its $k$-minors", which these conditions ensure).

%Let $p \in \M$, and let $x_i$ be local coordinates on $\N$ around $f(p)$.
%
%For a given increasing multi-index $I=(i_1,\dots,i_k)$, we write
%\[
%dx^I=dx^{i_1} \wedge dx^{i_2} \wedge \dots \wedge dx^{i_k}.
%\]
%$dx^I$ is a $k$-form on $\N$.
%
%According to theorem \ref{thm:existence_of_harmonic_frames}, there exist a local frame $\omega^J$  for $\bigwedge^k T^*\N$ around $f(p)$, consisting of closed and co-closed forms.
%
%Since the $\omega^J$ form a frame, we can write, in some neighbourhood $U \subseteq \N$ around $f(p)$,
%\[
%dx^I=a_J^I\omega^J, \, \, \text{ where } \, \, a_J^I \in C^{\infty}(U).
%\] 
%Write $df^I:=df^{i_1} \wedge df^{i_2} \wedge \dots \wedge df^{i_k}$. Then
%\beq
%\label{eq:recursive_reg_eq}
%df^I=f^*dx^I=(a_J^I\circ f)f^*\omega^J.
%\eeq
%Since $f^*\omega^J$ is smooth and $f$ is continuous, equation \eqref{eq:recursive_reg_eq} implies $df^I$ is continuous on $f^{-1}(U)$.

%\[
%(df^1 \wedge df^2 \wedge \dots \wedge df^k) \wedge ( df^{k+1} \wedge \dots \wedge df^{2k})=\det df\, \Vol_{\Omega},
%\]

%So far, we have established that the map $x \to  \bigwedge^k df_x$ is continuous on $f^{-1}(U)$. Since continuity is a local property, it follows that $x \to  \bigwedge^k df_x$ is continuous on $\M$. 

Recall that \lemref{lem:cont_minors_Riem} implies that the map $x \to  \bigwedge^k df_x$ is continuous on $\M$. Define 
\[
\Omega_0=\left\{ x \in \M \,  \middle|  \, \bigwedge^k df_x \in \text{GL}(\bigwedge^{k}\mathbb{R}^d) \right\}.
\]
%Here we used the assumption that $\M,\N$ are homeomorphic to $\R^d$; this allows us to think of $df$ as a map $\R^d \to \R^d$.
%\Asaf{This is not correct as written...since $df$ is a map $T\M \to T\N$, the target cannot be identified with $\R^d$...we need to work locally.}
The assumption $\det df \neq 0$ a.e. implies that $\Omega_0$ is open and has full measure in $\M$.  Furthermore, on $\Omega_0$, the continuity of the minors  $\bigwedge^k df$ implies the continuity of $df$, via composition with $\psi^{-1}$: The argument in the proof of  Theorem \ref{thm:regularity_via_minors_unique_case} implies that $\bigwedge^k df \in H=\image \psi$ everywhere, hence we can apply $\psi^{-1}$. %It is here that we use the assumptions on $k,d$ and $\text{sgn}( \det df)$, to ensure $\psi$ is invertible when considered with the suitable domains and codomains. %Since $df$ is continuous, we "know which branch" of $\psi^{-1}$ to choose from).

As in the proof of \lemref{lem:cont_minors_Riem}, let $p \in \M$, and let $x_i$ be local coordinates on $\N$ around $f(p)$. Let $\omega^J$ be a local frame for $\bigwedge^k T^*\N$ around $f(p)$, consisting of closed and co-closed forms. The structure of equation \eqref{eq:recursive_reg_eq} calls for an inductive argument. To see this more clearly, recall that on $\Omega_0$, $\bigwedge^k df \in H=\image \psi$, hence equation \eqref{eq:recursive_reg_eq} can be amplified to the following "two-parts system":
% \ref{thm:regularity_via_minors_unique_case}
\beq
\label{eq:recursive_reg_eq2}
df^I=(a_J^I\circ f)f^*\omega^J,  \, \, \, df=\psi^{-1}\big(\bigwedge^k df\big).
\eeq

This is a recursive loop: Since $\bigwedge^k df$ encodes all the $df^I$ in one "bundle", Equation \eqref{eq:recursive_reg_eq2} implies that the regularity of $df$ "equals" the regularity of $df^I$ which "equals" the regularity of  $f$. So, the regularity of $df$ is the same as  the regularity of $f$. This creates the loop.

In more detail, we shall prove inductively that $f \in C^m \Rightarrow f \in C^{m+1}$, by using  \eqref{eq:recursive_reg_eq2} combined with essentially the same argument presented in the proof of Theorem \ref{thm:regularity_via_minors_unique_case}. Suppose that $f \in C^m$. This implies that $\extp^k df$ is $C^m$, via equation \eqref{eq:recursive_reg_eq}. The image of the $C^m$  map
\[
\phi: x \to \bigwedge^k df_x \,\,,\,\, \phi:\Omega_0 \to \GL\big(\extp^k\R^d\big)
\]
is in fact contained in $H=\image \psi$. The argument for that is identical to the one in the proof of Theorem \ref{thm:regularity_via_minors_unique_case}. (Here we use $m \ge 0$, i.e. that $\bigwedge^k df$ is at least $C^0$). $\phi$ remains $C^m$ after restricting the codomain to $H$, i.e. the map
\[
\tilde \phi: x \to \bigwedge^k df_x \,\,,\,\, \tilde \phi:\Omega_0 \to H
\]
is $C^m$. (This follows from the fact $H$ is an embedded submanifold of $\GL(\extp^k\R^d)$. This is a variant of corollary 5.30 in \cite{Lee13}). Since $\psi:\GL(\R^d) \to H$ is a diffeomorphism, the map $\Omega_0 \to \GL(\mathbb{R}^d)$ given by $ x \mapsto \psi^{-1} \circ \tilde \phi(x)=df_x $ is $C^m$, hence $f \in C^{m+1}$. This establishes the smoothness of $f$ on $\Omega_0$.
%Here we treated the case where $\det df > 0$ a.e. and $k,d$ are not both even. The other cases are treated similarly exactly like in the proof of Theorem \ref{thm:regularity_via_minors_unique_case}. 
%To be more precise, we did not assume here that $\det df >0$ a.e. However, since $df$ is continuous, we "know which branch" of $\psi^{-1}$ to choose from.
\end{proof}
\paragraph{Open Questions}
In \corref{cor:closed_and_co-closed_pullbacks_preserve},\lemref{lem:sobolev_pullback_preserve_closedness_uniform_Riemann}, and \lemref{lem:sobolev_pullback_preserve_closedness_uni_man} we assumed $f$ is continuous.
It is natural to ask whether this continuity assumption is really necessary.
Even if it can be omitted, it does not immediately imply that it is redundant also in \propref{prop:Riem_reg_harmonic_morphism_smooth}, since its proof relied directly on the continuity of $f$.
\section{Generalization of regularity via minors: Odd $k$ }
\label{sec:sec_odd_k}

%This section is based on the following observation:
When $k$ is odd, the $k$-minors of $A \in \GL(\R^d)$ uniquely determine $A$, without any assumption on $\text{sgn}(\det A)$; due to \lemref{lem:almost_injectivity_of_exterior_map_invertible_case}, the minors of $A$ determine it up to a sign. Since $k$ is odd this determines $A$ unambiguously.

The philosophy of the "regularity via minors" result (Theorem \ref{thm:regularity_via_minors_unique_case}) was roughly the following: Whenever the minors of a map are smooth, and uniquely determine its differential, the map is smooth. Thus, it is natural to check what happens in the case where $df$ has low rank. Since in that case we do not have the sign of the determinant as a distinguishing property, we will have to restrict the discussion to odd $k$. 

As we shall see, invertibility of a linear map is not necessary for a unique determination by its $k$-minors; as we already mentioned, some non-degeneracy conditions are necessary, since all the $k$-minors of a linear map with degree less then $k$ are zero. It turns out that the right condition is rank \emph{larger} than $k$. We will show various regularity results for Sobolev maps $f$ with smooth $k$-minors, and rank larger than $k$.

%that whenever the $k$-minors of a map determines it uniquely, regularity holds. \Asaf{Not quite so... only in the fixed rank case}

\paragraph{Structure of this section} In \secref{subsec:unique_determination_by_minors} we prove the unique determination by minors, which is a linear algebraic result. In Section \ref{subsec:differential_topology_of_minors_map} we prove some differential topological properties of the minors map $A \to  \bigwedge^k A$ under the \emph{constant} rank assumption (i.e. fixed rank larger than $k$). In section \ref{subsec:Regularity_via_minors_odd_k_high_rank} we prove a regularity result for maps of constant rank. In Section \ref{subsec:obstructions_to_generalize_constant_rank}
we discuss the various obstructions for generalizing the previous results beyond the constant rank
case. In Section \ref{subsec:Regularity_via_minors_bounded_case} we give a regularity result when $f \in W^{1,\infty}$ has rank larger than $k$, relaxing the constant rank assumption.

\subsection{The minors of a linear map determine it uniquely} 
\label{subsec:unique_determination_by_minors}

Let $V,W$ be $d$-dimensional real vector spaces, and let $A,B \in \text{Hom}(V,W)$. Fix $1 \le k \le d-1$. Consider the induced maps $\extp^kA,\extp^kB :\extp^k V \to \extp^k W$. We want to characterize all the pairs $(A,B)$ which satisfy
$\extp^k A=\extp^k B \neq 0$.  

\begin{lemma}
\label{lem:equal_exterior_maps_gen_imp}
Suppose that $\extp^k A=\extp^k B \neq 0$. Then $\ker A=\ker B$  and $\image(A)=\image(B)$. 
Denote $U:=\ker A=\ker B$, $\tilde W:=\image(A)=\image(B)$, and let $\tilde A,\tilde B:V/U \to \tilde W$ be the quotient operators. Then $\extp^k \til A=\extp^k \til B$.
\end{lemma}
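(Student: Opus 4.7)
The plan is to extract the kernel and image information from the hypothesis $\extp^k A=\extp^k B\neq 0$ by plugging in carefully chosen decomposable elements of $\extp^k V$, and then descend to the quotients by a direct computation.

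First I would establish that $\extp^k A\neq 0$ forces $\rank A\geq k$ (since a decomposable $k$-wedge $Av_1\wedge\cdots\wedge Av_k$ is nonzero iff $Av_1,\dots,Av_k$ are linearly independent), and similarly $\rank B\geq k$. To prove $\ker A\subseteq \ker B$, take $v\in\ker A$ and arbitrary $w_1,\dots,w_{k-1}\in V$; then $Av\wedge Aw_1\wedge\cdots\wedge Aw_{k-1}=0$, and equality of the $k$-th exterior powers gives $Bv\wedge Bw_1\wedge\cdots\wedge Bw_{k-1}=0$ for every choice of $w_i$. If $Bv$ were nonzero, I could extend it to a linearly independent set $Bv,u_1,\dots,u_{k-1}$ inside $\image(B)$ (using $\rank B\geq k$), choose preimages $w_i$ of the $u_i$, and obtain a nonzero wedge—a contradiction. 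Hence $Bv=0$, and by symmetry $\ker A=\ker B$.

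Next I would prove $\image(A)=\image(B)$. Since the kernels agree, the two images already have the same dimension $r\geq k$, so it suffices to show inclusion. The key observation is that whenever $Av_1,\dots,Av_k$ are linearly independent, the identity $Av_1\wedge\cdots\wedge Av_k=Bv_1\wedge\cdots\wedge Bv_k$ (a nonzero decomposable $k$-vector) implies that the two sides determine the same $k$-dimensional subspace, i.e.\ $\Span\{Av_1,\dots,Av_k\}=\Span\{Bv_1,\dots,Bv_k\}\subseteq\image(B)$. Thus every $k$-dimensional subspace of $\image(A)$ lies in $\image(B)$. If $r=k$ this already gives $\image(A)\subseteq\image(B)$; if $r>k$, the $k$-dimensional subspaces of $\image(A)$ span all of $\image(A)$, so again $\image(A)\subseteq\image(B)$, and equality follows by dimension.

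Finally, setting $U=\ker A=\ker B$ and $\tilde W=\image(A)=\image(B)$, the quotient maps $\tilde A,\tilde B:V/U\to \tilde W$ are well defined. To verify $\extp^k\tilde A=\extp^k\tilde B$, I evaluate on a decomposable element $[v_1]\wedge\cdots\wedge[v_k]$, where the definition of $\tilde A$ gives $(\extp^k\tilde A)([v_1]\wedge\cdots\wedge[v_k])=Av_1\wedge\cdots\wedge Av_k$, and similarly for $\tilde B$; the hypothesis then yields equality on all decomposable elements, which span $\extp^k(V/U)$. The main obstacle in the whole argument is the image equality: the kernel step only exploits vanishing wedges, whereas matching the images requires using the full geometric information that equal nonzero decomposable $k$-vectors represent the same $k$-plane, together with careful bookkeeping when $r=k$ versus $r>k$.
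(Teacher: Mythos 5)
Your proof is correct and follows essentially the same route as the paper's: for the kernel equality, assume $Bv\neq 0$ for some $v\in\ker A$, extend $Bv$ to a linearly independent $k$-tuple in $\image(B)$, pull back by $B$, and derive a contradiction from $\extp^k A(v\wedge\cdots)=0$; for the image equality, use that equal nonzero decomposable $k$-vectors determine the same $k$-plane. You also spell out explicitly the final verification that $\extp^k\tilde A=\extp^k\tilde B$ on decomposable elements, which the paper leaves implicit.
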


\begin{proof} We begin by proving that $\ker A=\ker B$. Let $v \in \ker A$ and assume by contradiction that $Bv \neq 0$. Since $\dim(\image B) \ge k$, there exist $w_1,\dots,w_{k-1} \in \image B$ such that $Bv,w_1,\dots,w_{k-1}$ are linearly independent. Write $w_i=Bv_i$ for some $v_i \in V$; Then
\[
 Bv \wedge w_1 \wedge \dots \wedge w_{k-1}=\bigwedge^k A(v \wedge v_1 \wedge \dots \wedge v_{k-1})=0,
\]
which contradicts the linear independence of $Bv,w_1,\dots,w_{k-1}$. This shows $\ker A \subseteq \ker B$. The other direction follows by symmetry.

We turn to prove that $\text{Image} \, A=\text{Image} \, B$. Let $w_1 \in \image(A)$ be non-zero. Complete $w_1$ into a basis $(w_1,\dots,w_r)$ of $\image(A)$, where $r=\rank A$. Note that $\extp^k A \neq 0$ implies $r \ge k$. Write $w_i=Av_i$ for some $v_i $. Then
\[
 Bv_1 \wedge Bv_2  \wedge  \dots \wedge Bv_k = \extp^k A(v_1 \wedge  \dots \wedge v_{k})= w_1 \wedge w_2 \wedge \dots \wedge w_k \neq 0,
 \]
 which implies $\Span (  Bv_1,Bv_2,\dots, Bv_k )=\Span(w_1,\dots,w_k)$. In particular, $w_1 \in \image(B)$. Since $w_1$ was an arbitrary element in $\image(A)$ this implies $\image(A) \subseteq \image(B)$. The other direction follows by symmetry.
%
%
%Let $w \in \image(A)$ be non-zero. Then $w=Av_1$ for some $v_1 \neq 0$. Complete $v_1$ to a basis $v_1, \dots , v_d$ for $V$. Then
%\[
%w \wedge Av_2 \wedge \dots \wedge Av_k= \extp^k A(v_1 \wedge  \dots \wedge v_{k})= Bv_1 \wedge Bv_2  \wedge  \dots \wedge Bv_k,
% \]
% hence $Bv,Bv_1,\dots,Bv_{k-1}$ 

\end{proof}

\begin{corollary}
\label{cor:almost_injectivity_of_exterior_map_large_rank_case}
Suppose that $\extp^k A=\extp^k B $, and that $\rank(A) > k$. Then $A=\pm B$. If $k$ is odd, then $A=B$.
\end{corollary}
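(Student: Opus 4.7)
The plan is to reduce to the invertible case already handled in Lemma \ref{lem:almost_injectivity_of_exterior_map_invertible_case} by quotienting out the common kernel. First I would note that $\rank(A)>k$ automatically ensures $\extp^k A\neq 0$, so the hypothesis of Lemma \ref{lem:equal_exterior_maps_gen_imp} is met. That lemma then supplies three ingredients at once: $U:=\ker A=\ker B$, $\tilde W:=\image(A)=\image(B)$, and the induced quotient operators $\tilde A,\tilde B:V/U\to\tilde W$ satisfy $\extp^k\tilde A=\extp^k\tilde B$.

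Next I would observe that $\tilde A$ and $\tilde B$ are \emph{bijections} between vector spaces of common dimension $\rank(A)>k$. Hence Lemma \ref{lem:almost_injectivity_of_exterior_map_invertible_case} applies (with $V/U$ and $\tilde W$ in place of $V,W$, noting $k\le\dim(V/U)-1$ since $\rank(A)>k$), giving $\tilde A=\pm\tilde B$. Because both $A$ and $B$ factor through the same quotient and have the same image, lifting this equality from $V/U\to\tilde W$ back to $V\to W$ yields $A=\pm B$.

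Finally, for the odd $k$ refinement, I would rule out $A=-B$ by computing $\extp^k A=(-1)^k\extp^k B=-\extp^k B$; combined with the hypothesis $\extp^k A=\extp^k B$ this forces $\extp^k B=0$, contradicting $\rank(B)=\rank(A)>k$. I do not anticipate any real obstacle here: the substance of the argument is entirely in Lemma \ref{lem:equal_exterior_maps_gen_imp}, which has already stripped the problem down to the invertible situation, and the present corollary is essentially a bookkeeping step that passes from quotients back to the original maps and separates the two sign cases.
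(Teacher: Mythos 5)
Your proposal is correct and follows the paper's own route: both reduce to the invertible case via Lemma \ref{lem:equal_exterior_maps_gen_imp}, apply Lemma \ref{lem:almost_injectivity_of_exterior_map_invertible_case} to the quotient operators (with the same observation that $\rank(A)>k$ ensures the dimension hypothesis is met), and lift the conclusion back. Your explicit handling of the odd-$k$ sign rule-out via $\extp^k(-B)=(-1)^k\extp^k B$ is a small elaboration the paper leaves implicit (one could equally note that $\lambda^k=1$ with $\lambda\in\R$ odd forces $\lambda=1$ inside Lemma \ref{lem:almost_injectivity_of_exterior_map_invertible_case}), but the substance is identical.
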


\begin{proof}
By \lemref{lem:equal_exterior_maps_gen_imp}, the invertible quotient operators satisfy $\extp^k \til A=\extp^k \til B$. By \lemref{lem:almost_injectivity_of_exterior_map_invertible_case}, $\til A=\pm \til B$. (Note that $\rank(\tilde B)=\rank(\tilde A)=\rank( A) > k$ so the conditions of  \lemref{lem:almost_injectivity_of_exterior_map_invertible_case} hold).
\end{proof}

\begin{comment}
This result is optimal; we cannot require less restrictive assumptions on the ranks. When $\rank(A) < k$, $\extp^k A=0$, so the $k$-minors do not provide us any information.

When $\rank(A) = k$, there is a large freedom: The condition  $\extp^k \til A=\extp^k \til B$, is essentially a coordinate-free version of saying $\det \til A=\det \til B$. (The determinant is not really well-defined here as a number, since $\til A,\til B$ are not maps from a vector space to itself; they are maps between two different spaces, on which we did not choose preferred volume forms). Thus, given such an element $A$, we essentially have a copy of $\text{SL}_k$ of elements with the same $k$-minors.

\end{comment}

\subsection{Some differential topological properties of the minors map $A \to  \bigwedge^k A$}

%\Asaf{Perhaps the paragraph below should be in the beginning of section 4}

We have seen that the $k$-minors of a matrix uniquely  determines it when $k$ is odd and the rank is larger than $k$. (\corref{cor:almost_injectivity_of_exterior_map_large_rank_case}). Thus, it seems natural to study the differential topological properties of the minors map, when the domain is the set of matrices of rank larger than $k$, analogously to what we did in \lemref{lem:exterior_map_is_proper} and \corref{cor:image_exterior_map_is_closed_embedded}. In other words, at this stage we might aim for a regularity theorem via minors, when the rank of the map is a.e. larger than $k$. However, for various reasons there are some barriers in the way of proving such a generalization, as we shall explain in detail in Section \ref{subsec:obstructions_to_generalize_constant_rank}.

It will turn out, however, that when restricting the discussion to a \emph{specific prescribed} rank $r >k$, everything works fine. This is why we focus in this subsection on the constant-rank case.

\label{subsec:differential_topology_of_minors_map}

Let $V$ be a $d$-dimensional real vector space, and let $1 \le k \le d-1$ be fixed. For given natural numbers $r,s,$ we define
 \[
H_{r}=\{ A \in \End( V) \mid \operatorname{rank}(A) = r \}, 
\]
and
\[
\tilde H_{s}=\{ B \in \End\big(\bigwedge^k V\big) \mid \operatorname{rank}(B) = s \}.
\]
It is well-known that $H_{r}$ is an embedded submanifold of $\End(V)$ (and similarly $\tilde H_{s}$ is  an embedded submanifold of $ \End(\bigwedge^k V)$).

It is easy to see that for an element $A \in \End(V)$,
\[
\rank (\bigwedge^kA) = \binom{\rank(A)}{k}.
\]
Indeed, this can be deduced by using SVD, which reduces the problem to the diagonal case. 

The next result generalizes \corref{cor:exterior_map_properties}.

\begin{corollary}
\label{cor:exterior_map_constant_low_rank_properties}
Let $r>k$, and define $\psi:H_r \to \tilde H_{\binom {r}{k}}$ by $\psi( A)= \extp^k A$. $\psi$ is a smooth locally injective map of constant rank, hence an immersion. When $k$ is odd is $\psi$ is injective. 
\end{corollary}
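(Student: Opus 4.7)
The plan is as follows. Smoothness of $\psi$ is immediate: the entries of $\bigwedge^k A$ are polynomials in the entries of $A$, and $H_r$ sits as an embedded submanifold of $\End(V)$, so $\psi$ is the restriction of a smooth map to a submanifold. The codomain is correct by the rank formula $\rank(\bigwedge^k A) = \binom{\rank A}{k}$ stated just before the corollary, which places $\psi(A)$ in $\tilde H_{\binom{r}{k}}$.

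Global injectivity when $k$ is odd follows directly from \corref{cor:almost_injectivity_of_exterior_map_large_rank_case}: if $\bigwedge^k A = \bigwedge^k B$ with $\rank A = r > k$, then $A = B$. For local injectivity in general, \corref{cor:almost_injectivity_of_exterior_map_large_rank_case} yields $\psi^{-1}(\psi(A)) \subseteq \{A, -A\}$. Since $A \in H_r$ has rank $r \ge 1$, necessarily $A \neq 0$, so $A \neq -A$, and any sufficiently small neighborhood of $A$ in $H_r$ avoids $-A$, giving local injectivity.

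The main point is constant rank, and I expect this to be the primary obstacle to write cleanly. The plan is to exploit the transitive action of $G := \GL(V) \times \GL(V)$ on $H_r$ by $(P,Q)\cdot A = P A Q^{-1}$ (two matrices have the same rank iff they are related in this way). Define the analogous $G$-action on $\tilde H_{\binom{r}{k}}$ by $(P,Q)\cdot B = (\bigwedge^k P)\, B\, (\bigwedge^k Q)^{-1}$; this is well-defined because $\bigwedge^k P, \bigwedge^k Q \in \GL(\bigwedge^k V)$ so the rank of $B$ is unchanged. Multiplicativity of $\bigwedge^k$ gives the equivariance
\[
\psi(P A Q^{-1}) = (\bigwedge^k P)\, \psi(A)\, (\bigwedge^k Q)^{-1}.
\]
Both actions are by diffeomorphisms of the respective manifolds (smooth with smooth inverse $(P^{-1}, Q^{-1})$). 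Differentiating the equivariance $\psi \circ L_{(P,Q)} = L_{(\bigwedge^k P,\, \bigwedge^k Q)} \circ \psi$ and using that the action differentials are linear isomorphisms on tangent spaces shows that $d\psi_A$ and $d\psi_{P A Q^{-1}}$ have the same rank. Transitivity of the $G$-action on $H_r$ then forces $\rank(d\psi)$ to be constant on $H_r$.

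Finally, a locally injective smooth map of constant rank is an immersion by the rank theorem (Theorems 4.12 and 7.5 in \cite{Lee13}), exactly as in the argument of \corref{cor:exterior_map_properties}. This yields all the claimed properties.
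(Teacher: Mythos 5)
Your proof is correct and follows essentially the same route as the paper: local injectivity from Corollary~\ref{cor:almost_injectivity_of_exterior_map_large_rank_case}, constant rank via equivariance of $\psi$ with respect to the transitive $\GL(V)\times\GL(V)$ action (the paper cites Theorem~7.25 in \cite{Lee13} for the step you spell out by differentiating the equivariance), and immersion from the rank theorem. The only addition is your explicit remark that $A\neq -A$ to justify local injectivity, which the paper leaves implicit.
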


\begin{proof} \corref{cor:almost_injectivity_of_exterior_map_large_rank_case} implies that $\psi$ is locally injective. We shall show below that $\psi$ has constant rank. The fact $\psi$ is an immersion then immediately follows from the rank theorem.

First, we observe that there is a natural action on $H_{r}$ by $\GL(V) \times \GL(V)$, given by
\[
(A,B) \cdot C=ACB^{-1}.
\] 
As is well-known (and easy to prove) every two endomorphisms of rank $r$ lie in the same orbit under this action. (This is a coordinate-free way of saying that every matrix of rank $r$ is equivalent to a diagonal matrix, with the first $r$ diagonal elements $1$'s, and all the rest zeros). In other words, the action described above is transitive: All endomorphisms of a given rank $r$ form an orbit.

Now, note that there is a natural action by $\GL(V) \times \GL(V)$ on  $\End(\bigwedge^k V)$, given by
\[
(A,B) \cdot D=\extp^k A \circ D \circ \extp^k B^{-1}.
\]
It is immediate to see that our map $\psi:H_{r} \to \End(\bigwedge^k V)$ is equivariant w.r.t these actions.
Since the rank of an equivariant map is constant on orbits, we are done. (see Theorem 7.25 in \cite{Lee13}; this is essentially the chain rule).
\end{proof}

The next result generalizes \lemref{lem:exterior_map_is_proper}.

\begin{lemma}
\label{lem:exterior_map_is_proper_constant_rank}
Let $r>k$. Then $\psi:H_r \to \tilde H_{\binom {r}{k}}$ is proper. 
\end{lemma}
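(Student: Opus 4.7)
The plan is to adapt the strategy used for \lemref{lem:exterior_map_is_proper}, with the key adjustment being that we must now ensure the limit of a convergent subsequence has rank exactly $r$ (not just rank at most $r$, which would be automatic from continuity). Let $K \subseteq \tilde H_{\binom{r}{k}}$ be compact and let $A_n \in \psi^{-1}(K)$. After passing to a subsequence, we may assume $\bigwedge^k A_n \to D$ for some $D \in K$, so in particular $\rank(D) = \binom{r}{k}$. It suffices to show that $A_n$ has a subsequence converging in $\End(V)$ to some limit $A$; the continuity of $\psi$ will then force $\psi(A) = D \in K$, and the rank condition on $D$ together with $\rank(A) \le r$ will force $\rank(A) = r$, placing the limit in $H_r$.

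Next I would reduce to the diagonal case via the singular value decomposition. Write $A_n = U_n \Sigma_n V_n^T$ where $U_n, V_n$ are orthogonal and $\Sigma_n = \operatorname{diag}(\sigma_1^n, \ldots, \sigma_r^n, 0, \ldots, 0)$ with $\sigma_i^n > 0$ for $i \le r$. By compactness of the orthogonal group, we may assume (passing to a further subsequence) that $U_n \to U$ and $V_n \to V$. Since $\bigwedge^k A_n = \bigwedge^k U_n \circ \bigwedge^k \Sigma_n \circ \bigwedge^k V_n^T$ and the exterior powers of the orthogonal factors converge to invertible elements, it follows that $\bigwedge^k \Sigma_n$ converges to some diagonal operator $\til D$ of rank $\binom{r}{k}$.

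The matrix $\bigwedge^k \Sigma_n$ is diagonal, with its nonzero diagonal entries being exactly the products $\prod_{s=1}^k \sigma_{i_s}^n$ taken over all $k$-tuples $i_1 < \cdots < i_k$ contained in $\{1,\dots,r\}$. Since the rank of the limit $\til D$ is $\binom{r}{k}$ — the full number of such tuples — each of these products must converge to a strictly positive number. I would now argue exactly as in the invertible case: for any $1 \le i \neq j \le r$, the assumption $r > k$ (equivalently $r - 2 \ge k - 1$) allows us to pick $k-1$ indices $i_1,\dots,i_{k-1}$ in $\{1,\dots,r\} \setminus \{i,j\}$, so that the ratio $C_{ij}^n = \sigma_i^n/\sigma_j^n = \bigl(\prod \sigma_{i_r}^n \sigma_i^n\bigr)/\bigl(\prod \sigma_{i_r}^n \sigma_j^n\bigr)$ converges to a positive number. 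Combining this with the convergence of $\prod_{s=1}^k \sigma_{s}^n = (\sigma_1^n)^k \prod_{s=1}^k C_{s1}^n$ to a positive number, each $\sigma_i^n$ ($i \le r$) converges to a positive limit, and the remaining $\sigma_i^n \equiv 0$ are trivially convergent.

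The main subtle point — and where the hypothesis $r > k$ is critical — is precisely the existence of the auxiliary indices $i_1,\dots,i_{k-1}$ used to form the ratios; without $r \ge k+1$ one cannot separate individual singular values from the multiplicative information encoded in the $k$-minors, and indeed for $r = k$ properness fails (one has an $\operatorname{SL}_k$-worth of preimages with the same minors). Given $r > k$, assembling the pieces yields $\Sigma_n \to \Sigma \in H_r$ and hence $A_n \to U \Sigma V^T \in H_r$, completing the proof.
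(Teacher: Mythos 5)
Your proof is correct and follows essentially the same approach as the paper's: reduce to the diagonal case via SVD, observe that the rank condition $\rank(D) = \binom{r}{k}$ forces all the $k$-fold products of singular values over subsets of $\{1,\dots,r\}$ to converge to positive numbers, and then use $r > k$ to form ratios $\sigma_i^n/\sigma_j^n$ and recover convergence of the individual singular values. The only cosmetic difference is that you invoke lower semicontinuity of rank to get $\rank(A) \le r$ before pinning it down, whereas the paper deduces $\rank(A) = r$ directly from the identity $\rank(\bigwedge^k A) = \binom{\rank(A)}{k}$ — both are fine.
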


The next result generalizes \corref{cor:image_exterior_map_is_closed_embedded}.

\begin{corollary}
\label{cor:image_exterior_map_is_closed_embedded_constant_rank}
Let $r>k$. If $k$ is odd $\psi:H_r \to \tilde H_{\binom {r}{k}}$ is an embedding, and its image $ H:=\text{Image} (\psi)$ is a closed embedded submanifold of $\til H_{\binom {r}{k}}$.
\end{corollary}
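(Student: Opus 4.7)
The plan is to combine the three preparatory results about $\psi$ on the constant-rank stratum in exactly the same way as was done in \corref{cor:image_exterior_map_is_closed_embedded} for the invertible case. By \corref{cor:exterior_map_constant_low_rank_properties}, when $k$ is odd the map $\psi:H_r \to \tilde H_{\binom{r}{k}}$ is a smooth injective immersion, and by \lemref{lem:exterior_map_is_proper_constant_rank} it is proper.

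A smooth injective proper immersion between smooth manifolds is automatically an embedding (see Proposition~4.22 in \cite{Lee13}), so $\psi:H_r \to \tilde H_{\binom{r}{k}}$ is an embedding. Moreover, a proper continuous map between locally compact Hausdorff spaces is closed (see Proposition~A.57 in \cite{Lee13}). Hence the image $H:=\image(\psi)$ is a closed subset of $\tilde H_{\binom{r}{k}}$, and being the image of an embedding, it is a closed embedded submanifold.

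No real obstacle is expected: the whole argument is a verbatim transcription of the proof of \corref{cor:image_exterior_map_is_closed_embedded}, with $\GL(\R^d)$ and $\GL(\extp^k\R^d)$ replaced by the constant-rank strata $H_r$ and $\tilde H_{\binom{r}{k}}$. The only subtlety was to verify that these strata are themselves smooth manifolds (so that Proposition~4.22 of \cite{Lee13} applies), and that $\psi$ lands in the correct stratum of the target; both of these are recorded in the preceding subsection.
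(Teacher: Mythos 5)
Your proof is correct and follows precisely the same route as the paper: the paper simply states that the argument is identical to \corref{cor:image_exterior_map_is_closed_embedded}, i.e.\ combine \corref{cor:exterior_map_constant_low_rank_properties} (smooth injective immersion) with \lemref{lem:exterior_map_is_proper_constant_rank} (proper) and invoke the standard facts that a proper injective immersion is an embedding and that proper continuous maps are closed. Nothing to add.
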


The proof is identical to the proof of \corref{cor:image_exterior_map_is_closed_embedded}.

\begin{proof}[Of \lemref{lem:exterior_map_is_proper_constant_rank}]
The proof is essentially the same as the proof of \lemref{lem:exterior_map_is_proper}, with some natural modifications. For completeness, we provide here the full argument:

Let $K \subseteq \tilde H_{\binom {r}{k}}$ be compact, and let $A_n \in \psi^{-1}(K)$. We shall prove $A_n$ has a convergent subsequence in $\psi^{-1}(K)$. It suffices to prove $A_n$ converges in $\text{End}(V)$; indeed, if $A_n \to A$, then $\bigwedge^k A_n \to \bigwedge^k A$, and the limit $\bigwedge^k A$ must be in $K$. In particular, $\binom {r}{k}=\operatorname{rank}(\bigwedge^kA) = \binom {\operatorname{rank}(A)}{k}$, so $\operatorname{rank}(A)=r$, that is $A \in H_r$.

By using SVD, we can assume $A_n=\text{diag}(\sigma_1^n,\dots,\sigma_r^n,0,\dots,0)$  is diagonal, where the first $r$ diagonal elements are non-zero, and the last $d-r$ elements are zero. (Since the orthogonal group is compact, the isometric components surely converge after passing to a subsequence).

$\bigwedge^k A_n$ is diagonal, and its first $\binom {r}{k}$ elements are of the form $\Pi_{s=1}^k \sigma_{i_s}^n$, where all the $1 \le i_s \le r$ are distinct. So, every such product converges when $n \to \infty$ to a positive number. Indeed, $\psi(A_n)=\bigwedge^k A_n \in K \subseteq \tilde H_{\binom {r}{k}}$, so it converges (after passing to a subsequence) to an element $D \in K$. Since $\text{rank}(D)=\binom {r}{k}$, it follows that the products $\Pi_{s=1}^k \sigma_{i_s}^n$ must converge to positive numbers. (If even one of them would converge to zero instead, the rank of the limit $D$ would be too low, which is a contradiction).

Now, let $1\le i \neq j \le r$. Since $r \ge k+1$, we can choose some $1 \le i_1,\dots,i_{k-1} \le r$ all different from $i,j$. Since both products $$(\Pi_{s=1}^{k-1} \sigma_{i_s}^n)\sigma_{i}^n,(\Pi_{s=1}^{k-1} \sigma_{i_s}^n)\sigma_{j}^n$$ converge to positive numbers, so does their ratio $C_{ij}^n=\frac{\sigma_i^n}{\sigma_j^n}$.

We know that 
\[
\Pi_{s=1}^k \sigma_{s}^n=\Pi_{s=1}^k \sigma_{1}^n\frac{\sigma_s^n}{\sigma_1^n}=(\sigma_{1}^n)^k  \Pi_{s=1}^k C_{s1}^n
\]
 converges to a positive number. Since all the $C_{s1}^n$ converge to positive numbers, we deduce that $\sigma_1^n$ converges. Without loss of generality, the same holds for every $\sigma_i^n$, so $A_n$ indeed converges (and we know the limit must have the right rank).
\end{proof}

\subsection{Regularity Result}

\label{subsec:Regularity_via_minors_odd_k_high_rank}

%\[
%\tilde H_{>s}=\{ B \in \End(\bigwedge^k V) \mid \operatorname{rank}(B) > s 
%\} \subseteq \End(\bigwedge^k V).
%\]

%$\psi:H_r \to \tilde H_{\binom {r}{k}}$

The following theorem generalizes Theorem \ref{thm:regularity_via_minors_unique_case}.

\begin{theorem}[Regularity via minors (odd $k$)]
\label{thm:regularity_via_minors_odd_k_case}
Let $d>2$, and let $\Omega$ be an open subset of $\R^d$. Let $2 \le k \le d-1$ be a fixed odd integer and let $p \ge 1$, $r >k$. 

Let $f \in W^{1,p}(\Omega,\mathbb{R}^d)$ and suppose that $\extp^k df \in \tilde H_{\binom {r}{k}}$ is smooth. Then $f$ is smooth.

\end{theorem}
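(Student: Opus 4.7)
The proof will closely mirror that of Theorem \ref{thm:regularity_via_minors_unique_case}, replacing the global map $\psi:\GL^+(\R^d)\to \GL(\extp^k\R^d)$ with its constant-rank analogue $\psi:H_r\to \tilde H_{\binom{r}{k}}$ furnished by Corollaries \ref{cor:exterior_map_constant_low_rank_properties} and \ref{cor:image_exterior_map_is_closed_embedded_constant_rank}. The oddness of $k$ plays the role that $\sgn(\det df)$ played in Theorem \ref{thm:regularity_via_minors_unique_case}: it is what guarantees injectivity of $\psi$ on $H_r$ (by \corref{cor:almost_injectivity_of_exterior_map_large_rank_case}), allowing us to reconstruct $df$ from its $k$-minors without any further sign hypothesis.

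Concretely, the plan is the following. First, I would record that by \corref{cor:image_exterior_map_is_closed_embedded_constant_rank}, $\psi:H_r\to \tilde H_{\binom{r}{k}}$ is a smooth embedding whose image $H:=\text{Image}(\psi)$ is a closed embedded submanifold of $\tilde H_{\binom{r}{k}}$; in particular $\psi:H_r\to H$ is a diffeomorphism. Next, by hypothesis the map
\[
\phi:\Omega\to \tilde H_{\binom{r}{k}},\qquad \phi(x)=\extp^k df_x,
\]
is smooth. The key claim is $\phi(\Omega)\subseteq H$. For this, note that $\phi(x)$ has rank $\binom{r}{k}$ at every $x\in\Omega$ (since it lies in $\tilde H_{\binom{r}{k}}$), while on the full-measure set $\Omega_1\subseteq\Omega$ where the weak derivative $df_x$ is defined and coincides with its smooth representative, the identity $\rank(\extp^k df_x)=\binom{\rank(df_x)}{k}$ forces $\rank(df_x)=r$; hence $df_x\in H_r$ and $\phi(x)=\psi(df_x)\in H$ on $\Omega_1$. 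Because $H$ is closed in $\tilde H_{\binom{r}{k}}$ and $\phi$ is continuous, $\phi^{-1}(H)$ is closed in $\Omega$; since it contains the dense subset $\Omega_1$, we conclude $\phi^{-1}(H)=\Omega$.

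Once $\phi$ is known to land in the embedded submanifold $H$, the standard fact (Corollary 5.30 in \cite{Lee13}) that smoothness is preserved after restriction of codomain to an embedded submanifold gives a smooth map $\tilde\phi:\Omega\to H$. Composing with the diffeomorphism $\psi^{-1}:H\to H_r$ yields a smooth map
\[
\psi^{-1}\circ\tilde\phi:\Omega\to H_r\subseteq \End(\R^d),
\]
which, by construction, agrees almost everywhere with the weak derivative $df$. Thus $df$ admits a smooth representative, and $f$ is smooth.

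The main substantive point, and the one step that genuinely uses something beyond what appears in the proof of Theorem \ref{thm:regularity_via_minors_unique_case}, is the verification that $\phi(\Omega)\subseteq H$. In Theorem \ref{thm:regularity_via_minors_unique_case} this was automatic because $\psi$ was defined on all of $\GL^+(\R^d)$ and one only had to worry about a sign; here one must combine the closedness of $H$ inside $\tilde H_{\binom{r}{k}}$ (an output of the properness statement in \lemref{lem:exterior_map_is_proper_constant_rank}) with the rank identity $\rank(\extp^k A)=\binom{\rank(A)}{k}$ to rule out the possibility that some $\phi(x)$ might sit in $\tilde H_{\binom{r}{k}}\setminus H$. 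Everything else is a faithful transcription of the argument from \secref{sec:sec_main}.
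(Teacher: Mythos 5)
Your proof is correct and follows essentially the same route as the paper's: invoke \corref{cor:image_exterior_map_is_closed_embedded_constant_rank} to get the closed embedding $\psi:H_r\to\tilde H_{\binom{r}{k}}$, use closedness of $H$ plus density of the full-measure set where $\rank(df_x)=r$ to conclude $\phi(\Omega)\subseteq H$, restrict the codomain, and compose with $\psi^{-1}$. One small point: the closedness-plus-density argument for $\phi(\Omega)\subseteq H$ is not new to this theorem — the paper's proof of Theorem~\ref{thm:regularity_via_minors_unique_case} already uses the identical device (with $\det df_x>0$ in place of $\rank(df_x)=r$) — so that step is a transcription too, not the genuinely new ingredient you flagged.
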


\begin{comment}
Note that the assumption $\extp^k df \in \tilde H_{\binom {r}{k}}$ immediately implies that $\rank(df)=r$ a.e., even before we deduce $f$ is smooth.
\end{comment}

\begin{proof}
The proof is essentially the same as the proof of Theorem \ref{thm:regularity_via_minors_unique_case}, with some natural modifications. For completeness, we provide the full argument here:

Recall that $\psi:H_r \to \tilde H_{\binom {r}{k}}$ is given by $\psi(A)= \extp^k A$. By \corref{cor:image_exterior_map_is_closed_embedded_constant_rank},  $\psi$ is a  smooth embedding, and its image $ H:=\text{Image} (\psi)$ is a closed embedded submanifold of $\tilde H_{\binom {r}{k}}$. Thus, $\psi:H_r \to H$ is a diffeomorphism.

The map
\[
\phi: x \to \bigwedge^k df_x \,\,,\,\, \phi:\Omega \to \til H_{\binom {r}{k}}
\]
is smooth by assumption, and its image is contained in $H$. Indeed, since $H$ is closed in $ \til H_{\binom {r}{k}} $, $\phi^{-1}(H)$ is closed in $\Omega$. Furthermore, on a set of full measure $\rank( df)=r$, and for every $x \in \Omega$ where $ \rank(df_x)=r $ we clearly have $\phi(x) \in H$. 

%and that the smooth map $\phi$ coincides with $x \to \bigwedge^k df_x$ a.e., where $df_x$ is some representative of $df \in L^p(\Omega,\R^{d^2})$. 

An alternative phrasing: $\phi$ is a smooth version of the (almost everywhere defined) map $x \to \bigwedge^k df_x$. Hence, we know that almost everywhere on $\Omega$, the image $\phi(x)$ lies in $H$ (which is the subspace of all the endomorphisms $\bigwedge^k \R^d \to \bigwedge^k \R^d $ "arising" from rank $r$-endomorphisms $\R^d \to \R^d$, via exterior powers). 

This implies $\phi^{-1}(H)$ is closed and dense in $\Omega$, hence $\phi^{-1}(H)=\Omega$.
 
Since $H$ is an embedded submanifold of $\tilde H_{\binom {r}{k}}$, $\phi$ remains smooth after restricting the codomain to $H$ (see corollary 5.30 in \cite{Lee13}). We then get that the map
\[
\tilde \phi: x \to \bigwedge^k df_x \,\,,\,\, \tilde \phi:\Omega \to H
\]
is smooth.
Finally, since $\psi:H_r \to H$ is a diffeomorphism, we deduce that the following map $\Omega \to H_r$,
\[
\Omega: x \mapsto \psi^{-1} \circ \tilde \phi(x)=df_x
\]
is smooth. (More precisely, $\psi^{-1} \circ \tilde \phi$ is smooth and coincides with $df$ almost everywhere ). This establishes the smoothness of $f$.
\end{proof}

\subsection{Obstructions to generalization beyond constant rank}

\label{subsec:obstructions_to_generalize_constant_rank}

%Consider again theorem \ref{thm:regularity_via_minors_odd_k_case}. The reader might wonder why we have restricted the rank of $f$ to be constant. After all, the $k$-minors of a map uniquely determines it whenever its rank is larger than $k$.  (This is what \corref{cor:almost_injectivity_of_exterior_map_large_rank_case} is saying). Thus it seems natural to only assume that $\rank(df) > k$ a.e., or alternatively (in the smooth version) $\rank(\extp^k df)>k$.
%
%However, the devil is in the details: the current approach hits a wall, when one tries to implement it in this more general case, as we now explain.

Consider again Theorem \ref{thm:regularity_via_minors_odd_k_case}. We restricted the statement to the case where $f$ has \emph{constant} rank larger than $k$, rather than formulating  
something which works whenever $f$ has rank larger than $k$, but not necessarily constant.

As we already said, it seems natural to only assume that $\rank(df) > k$ a.e., or alternatively (in the smooth version) $\rank(\extp^k df)>k$; after all, the $k$-minors of a map uniquely determine it whenever its rank is larger than $k$.  

However, the devil is in the details: even though the "linear-algebraic" part of unique determination still works, the differential topological aspects of the problem change when we leave the realm of constant rank maps. In this subsection we describe "what goes wrong"; along the way we describe some phenomenon which seems to be interesting on its own.

%the current approach hits a wall, when one tries to implement it in this more general case, as we now explain.

First, some more notations: Let $V$ be a $d$-dimensional real vector space. Fix an odd $1 \le k \le d-1$. Define
\beq
\label{def:space_of_endomorphisms_larger_rank}
H_{>r}=\{ A \in \End(V) \mid \operatorname{rank}(A) > r 
\},
\eeq
and
\beq
\label{def:space_of_exterior_endomorphisms_larger_rank}
\tilde H_{>s}=\{ B \in \End\big(\bigwedge^k V\big) \mid \operatorname{rank}(B) > s \}.
\eeq

$H_{>k}$ is an open submanifold of $\End(V)$.
Consider the smooth map
\[
\psi:H_{>k} \to \End\big(\extp^k V\big) \, \,, \, \, \psi(A)=\extp^{k}A.
%\psi:H_r \to \text{End}(\bigwedge^{k}V) \, \,, \, \, \psi(A)=\bigwedge^{k}A,
\]
Note that $\psi(H_{>k}) \subseteq \til H_{>k}$.
\corref{cor:almost_injectivity_of_exterior_map_large_rank_case} implies $\psi$ is injective. It is possible to prove that it is actually an immersion. 
\begin{comment}
Proving that $\psi$ is an immersion is harder than in the constant rank case: We have more than one orbit for the action of $\GL(V) \times \GL(V)$ on the domain $H_{>k}$, and by general theory all we know is that the rank of $\psi$ is constant \emph{on each orbit}. Hence we cannot use the rank theorem directly; a not so short calculation is needed. (see Appendix \ref{sec:minors_map_immersion}). %\Asaf{$\psi$ is an immersion on each orbit.... but this does not prove it is a an immersion on the open union}
\end{comment}

Suppose we wanted to prove the following generalization of Theorem \ref{thm:regularity_via_minors_odd_k_case}:
\begin{conjecture}[Regularity via minors (odd $k$)]
\label{thm:regularity_via_minors_odd_k_case_conjecture}
Let $d>2$, and let $\Omega$ be an open subset of $\R^d$. Let $2 \le k \le d-1$ be a fixed odd integer and let $p \ge 1$.

Let $f \in W^{1,p}(\Omega,\mathbb{R}^d)$ and suppose that $\extp^k df \in \tilde H_{>k}$ is smooth. Then $f$ is smooth.
\end{conjecture}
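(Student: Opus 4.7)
The plan is to promote $\psi : H_{>k} \to \tilde{H}_{>k}$ to a smooth embedding, mirroring the proof of \thmref{thm:regularity_via_minors_odd_k_case}. Three ingredients are required: injectivity of $\psi$, which follows from \corref{cor:almost_injectivity_of_exterior_map_large_rank_case}; the immersion property, which the excerpt already asserts and which one would verify by computing $d\psi_A$ at an SVD normal form $A = \operatorname{diag}(\sigma_1,\ldots,\sigma_r,0,\ldots,0)$ with $k < r \le d$ and checking injectivity block by block, separating the diagonal entries indexed by $k$-subsets lying inside $\{1,\ldots,r\}$ from those that cross into the zero block (this replaces the orbit/equivariance argument of the constant-rank case, which breaks because $H_{>k}$ now contains several orbits of $\GL(V)\times\GL(V)$); and properness of $\psi$ as a map into $\tilde{H}_{>k}$. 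Given these three ingredients, an injective proper smooth immersion is an embedding (Prop.~4.22 in \cite{Lee13}) and the image $H$ is closed in $\tilde{H}_{>k}$. The rest would then copy Theorem \ref{thm:regularity_via_minors_odd_k_case} verbatim: continuity of $\phi := \extp^k df$ together with $H$ being closed, combined with the a.e.\ factorisation of $\phi$ through $\psi$, forces $\phi(\Omega) \subseteq H$; $\phi$ is then smooth into $H$; and $\psi^{-1}\circ \phi$ is a smooth representative of $df$.

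The main obstacle is the properness step, and I expect it to fail genuinely. Consider $d = 5$, $k = 3$, and $A_n = \operatorname{diag}\bigl(n, n^{-1/2}, n^{-1/2}, n^{-1/2}, n^{-1/2}\bigr) \in H_5 \subset H_{>3}$. Then $\extp^3 A_n$ is diagonal in the standard basis of $\extp^3 \R^5$: the six entries indexed by $3$-subsets containing $1$ equal $n \cdot n^{-1/2} \cdot n^{-1/2} = 1$, while the four entries indexed by $3$-subsets of $\{2,3,4,5\}$ equal $n^{-3/2} \to 0$. Thus $\extp^3 A_n \to B \in \tilde{H}_{>3}$ with $\operatorname{rank}(B) = 6$, yet $A_n$ has no convergent subsequence in $H_{>k}$ (its top singular value diverges); worse, no $A \in H_{>3}$ can satisfy $\extp^3 A = B$, since this would force $\operatorname{rank}(B) = \binom{\operatorname{rank}(A)}{3} \in \{4, 10\}$. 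Hence $\psi$ is not proper into $\tilde{H}_{>k}$ and $\psi(H_{>k})$ is not closed there, and the embedding strategy cannot extend verbatim.

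Consequently, a successful proof would need either (a) to exploit some extra feature of $\phi = \extp^k df$ when $f\in W^{1,p}$, ruling out the kind of boundary limit witnessed above (continuity plus full-measure containment in $\psi(H_{>k})$ alone will not suffice, since that set is not closed); or (b) a stratified argument, applying \thmref{thm:regularity_via_minors_odd_k_case} on each open nested set $\Omega_{\ge r} := \{x : \operatorname{rank}(\phi(x)) \ge \binom{r}{k}\}$ and gluing smoothness across the boundary strata. Either route looks delicate, and a natural preliminary question is whether the conjecture is actually true as stated: realising the singular-value degeneration above as the derivative of an explicit non-smooth $W^{1,p}$ map would produce a counterexample, whereas any proof is likely to require a genuinely new ingredient beyond the embedding technology used elsewhere in the paper.
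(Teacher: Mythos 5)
The statement is labelled as a \emph{Conjecture} in the paper, which does not prove it; instead, the paper's own discussion (the subsection on obstructions to generalization beyond constant rank) diagnoses precisely what you do, including the identical $d=5$, $k=3$, $A_n=\operatorname{diag}(n, n^{-1/2}, n^{-1/2}, n^{-1/2}, n^{-1/2})$ example showing $\psi:H_{>k}\to\tilde H_{>k}$ is not proper and its image not closed, and it additionally shows the image fails to be weakly embedded before retreating to the $W^{1,\infty}$ setting where boundedness of $|df|$ restores properness. Your analysis — that the embedding machinery cannot be imported verbatim, that either a new analytic ingredient or a stratified argument would be needed, and that the truth of the conjecture is itself unresolved — matches the paper's treatment and correctly identifies the statement as open rather than provable by the methods at hand.
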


Following the footsteps of the proof of Theorem \ref{thm:regularity_via_minors_odd_k_case}, we consider the map
\[
\psi:H_{>k} \to  \tilde H_{>k}.
\]
We already noted that $\psi$ is an injective immersion. Recall that a crucial part of our argument used the fact that the image $ \psi(H_{r})$ was a \emph{closed} embedded submanifold of $\tilde H_{\binom {r}{k}}$.

Denote $H:=\psi(H_{>k})$. In order to imitate the proof of Theorem \ref{thm:regularity_via_minors_odd_k_case}, we need $H$ to be \emph{closed} in $\til H_{>k}$; Indeed, we now want to look at the map
\[
\phi: x \to \bigwedge^k df_x \,\,,\,\, \phi:\Omega \to \til H_{>k},
\]
and claim $\image(\phi) \subseteq H$. We know that  $\phi^{-1}(H)$ is dense in $\Omega$.
In our previous case, $H= \psi(H_{r})$ was closed in the ambient space $\tilde H_{\binom {r}{k}}$, hence $\phi^{-1}(H)$ was closed in $\Omega$.

However, in the current case, \emph{it is not true that $H=\psi(H_{>k})$ is closed in $\til H_{>k}$.} Indeed, consider the following counter-example:

Let $d=5,k=3$, and set $A_n=\text{diag}(n,\frac{1}{\sqrt{n}},\frac{1}{\sqrt{n}},\frac{1}{\sqrt{n}},\frac{1}{\sqrt{n}}) \in H_{>3}$. Then 
\[
\psi(A_n)=\bigwedge^3 A_n=\text{diag}(1,1,1,1,1,1,(\tfrac{1}{\sqrt{n}})^3,(\tfrac{1}{\sqrt{n}})^3,(\tfrac{1}{\sqrt{n}})^3,(\tfrac{1}{\sqrt{n}})^3) \in H=\psi(H_{>3}) 
\]
 converges to $D=\text{diag}(1,1,1,1,1,1,0,0,0,0) \in \tilde H_{>3}$.

However, $D \notin \psi(H_{>3})$  since it does not have the right rank:
\[
\text{rank}(D)=6 \,, \, \text{and} \psi(H_{>3})= \psi(H_4)\cup \psi(H_5) \subseteq  \tilde H_{\binom {4}{3}} \cup H_{\binom {5}{3}}=\tilde H_4 \cup \tilde H_{10}.
\]

This is not the only problem with adapting the proof to this setting; there is another obstacle for continuing the proof:

Even if we somehow knew that $\image(\phi) \subseteq H$, the next step would be to say that $\phi$  remains smooth after restricting the codomain to $H$. However, it is no longer clear this is the case. The point is that the "new" $\psi$, i.e. $\psi$ considered as a map $H_{>k} \to  \tilde H_{>k}$, is not an embedding anymore. Indeed, it is certainly not proper: our example above shows this directly; the $\psi(A_n)$ lie in the compact subset $\cup_{n \in \mathbb{N}} \psi(A_n) \cup D \subseteq \til H_{>3}$ but the $A_n$ diverge. Alternatively, we can use the fact that proper maps are closed, and we just showed the image of our map is not closed.

With a little more work, we can see that $\psi:H_{>k} \to  \tilde H_{>k}$ is really not an embedding:
Let $d=6,k=3$. Take 
\[
A=\text{diag}(1,1,1,1,1,0) \in \text{End}( V).
\]
 
 Then 
 \[
 \bigwedge^3 A =\text{diag}(1,\dots,1,0,\dots,0) \in \text{End}( \bigwedge^3 V).
 \] 
 ($10$ times the value $1$, and $10$ times the value $0$).

Let $B_r(A) \in H_{>3}$ be an open ball. Assume by contradiction that $\psi(B_r(A))$ is open in $\psi(H_{>3})$, endowed with the subspace topology from $\tilde H_{>3}$.

There exist an open set $U \subseteq \text{End}( \bigwedge^3 V)$  such that $\psi(B_r(A))=\psi(H_{>3}) \cap U$.
Define 
\[
A_n=\text{diag}(n,\tfrac{1}{\sqrt n},\tfrac{1}{\sqrt n},\tfrac{1}{\sqrt n},\tfrac{1}{\sqrt n},\tfrac{1}{\sqrt n}).
\] 
Then 
\[
\psi(A_n)=\bigwedge^3 A_n =\text{diag}(1,\dots,1,(\tfrac{1}{\sqrt{n}})^3,\dots, (\tfrac{1}{\sqrt{n}})^3) 
\]
($10$ times the value $1$, and $10$ times the value $(\frac{1}{\sqrt{n}})^3$) lies in $\psi(H_{>3}) \cap U=\psi(B_r(A))$ for sufficiently large $n$. Since $\psi$ is injective, $A_n \in B_r(A)$ for sufficiently large $n$. This is a contradiction. 
%\[
%\text{diag}(1,1,1,1,1,1,1,1,1,1,(\frac{1}{\sqrt{n}})^3,(\frac{1}{\sqrt{n}})^3,(\frac{1}{\sqrt{n}})^3,(\frac{1}{\sqrt{n}})^3,(\frac{1}{\sqrt{n}})^3,(\frac{1}{\sqrt{n}})^3,(\frac{1}{\sqrt{n}})^3,(\frac{1}{\sqrt{n}})^3,(\frac{1}{\sqrt{n}})^3,(\frac{1}{\sqrt{n}})^3) 
%\]
To summarize, so far we have proved that
\begin{proposition}
In general, $\psi:H_{>k} \to  \tilde H_{>k}$ is not proper, and also not an embedding. Also, its image is not closed in $ \tilde H_{>k}$.
\end{proposition}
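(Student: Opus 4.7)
The plan is to verify each of the three failures by exhibiting explicit diagonal sequences in $H_{>k}$ whose singular values are tuned so that the minors map behaves pathologically. The underlying design principle is simple: if one singular value scales like $t \to \infty$ and the remaining $r-1$ singular values scale like $t^{-1/(k-1)}$, then every $k$-fold product involving the large singular value equals $t^{1-(k-1)\cdot 1/(k-1)} = 1$, while every $k$-fold product avoiding it equals $t^{-k/(k-1)} \to 0$. This single mechanism supplies all three counter-examples.

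For the failure of $\psi(H_{>k})$ to be closed in $\til H_{>k}$, I take $d=5$, $k=3$, and let $A_n = \text{diag}(n, n^{-1/2}, n^{-1/2}, n^{-1/2}, n^{-1/2}) \in H_5 \subseteq H_{>3}$. A direct diagonal computation of $\bigwedge^3 A_n$ gives $\binom{4}{2}=6$ entries equal to $1$ and $\binom{4}{3}=4$ entries equal to $n^{-3/2}$, so $\psi(A_n) \to D$ with $D$ of rank $6$. But $\psi(H_{>3}) = \psi(H_4) \cup \psi(H_5)$ consists only of operators of rank $\binom{4}{3}=4$ or $\binom{5}{3}=10$, so $D \notin \psi(H_{>3})$. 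The non-properness of $\psi$ then comes for free: the set $K = \{\psi(A_n)\}_{n} \cup \{D\}$ is compact in $\til H_{>k}$, yet $\psi^{-1}(K)$ contains the sequence $\{A_n\}$, which has no convergent subsequence in $H_{>k}$ since $\|A_n\| \to \infty$. (Equivalently, one can cite the fact that proper continuous maps into a Hausdorff space have closed image, which we have just ruled out.)

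The only step requiring slightly more care is the failure of $\psi$ to be a topological embedding, since injectivity and being an immersion are already established. Here I would take $d=6$, $k=3$, $A = \text{diag}(1,1,1,1,1,0) \in H_5$, and the rescaled sequence $A_n = \text{diag}(n, n^{-1/2}, n^{-1/2}, n^{-1/2}, n^{-1/2}, n^{-1/2}) \in H_6$. The same minor count shows that both $\psi(A)$ and $\lim_n \psi(A_n)$ equal the diagonal with ten $1$'s and ten $0$'s; in particular $\psi(A_n) \to \psi(A)$ in $\til H_{>k}$. If $\psi$ were an embedding, any open ball $B_r(A) \subseteq H_{>3}$ would satisfy $\psi(B_r(A)) = \psi(H_{>3}) \cap U$ for some open $U \subseteq \End(\bigwedge^3 \R^6)$. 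Then $\psi(A) \in U$ forces $\psi(A_n) \in U \cap \psi(H_{>3}) = \psi(B_r(A))$ for large $n$, and the injectivity of $\psi$ (\corref{cor:almost_injectivity_of_exterior_map_large_rank_case}) upgrades this to $A_n \in B_r(A)$, contradicting $\|A_n\|\to\infty$.

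The anticipated main obstacle is purely a bookkeeping one, namely verifying the diagonal form of $\bigwedge^k A_n$ and correctly counting which multi-indices contain or avoid the "large" index; once that is done, all three conclusions collapse into short topological observations. No deeper ingredients are needed beyond the injectivity of $\psi$ on $H_{>k}$ already proved in \corref{cor:almost_injectivity_of_exterior_map_large_rank_case} and the elementary fact that an injective proper continuous map is an embedding with closed image (which we are contrapositively using throughout).
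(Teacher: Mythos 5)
Your arguments for the non-closed image and for the failure of properness are correct and match the paper's own proof exactly.

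The third claim, that $\psi$ is not an embedding, contains a genuine gap, which is in fact also latent in the paper's own argument. You assert that $\lim_n\psi(A_n)=\psi(A)$ because both are diagonal with ten $1$'s and ten $0$'s, but this conflates the multiset of diagonal entries with the operator itself. In the basis of $\bigwedge^3\mathbb{R}^6$ indexed by $3$-subsets $I\subseteq\{1,\dots,6\}$, $\psi(A)$ has a $1$ at $e_I$ precisely when $6\notin I$, while $\lim_n\psi(A_n)$ has a $1$ at $e_I$ precisely when $1\in I$; these index sets differ (take $I=\{2,3,4\}$), so $\lim_n\psi(A_n)\neq\psi(A)$, and you cannot conclude that $\psi(A_n)\in U$ for large $n$. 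The failure appears to be essential rather than cosmetic: an SVD analysis in the spirit of \lemref{lem:exterior_map_is_proper} shows that for any $A_n\in H_{>k}$ with $\|A_n\|\to\infty$ and $\psi(A_n)$ convergent, one gets $\sigma_k^n\to 0$ and hence the diagonal entry at $\{2,\dots,k+1\}$ vanishes in the limit; by the dominance ordering on the diagonal of $\bigwedge^k\Sigma_n$, the non-zero positions are then confined to $\{I:1\in I\}$, whose span $e_1\wedge\bigwedge^{k-1}V$ never contains a subspace of the form $\bigwedge^k W$ with $\dim W>k$. Combined with the routine bounded case, this shows $\psi^{-1}$ is continuous on $\psi(H_{>k})$, so $\psi$ actually is a topological (hence smooth) embedding with a non-closed image; the "not an embedding" part of the proposition therefore seems to be in error, independently of the proof.
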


Since $\psi:H_{>k} \to  \tilde H_{>k}$ is not an embedding, its image $H$ is not an \emph{embedded} submanifold of $\til H_{>k}$. Thus, it is not clear that smooth maps into $\til H_{>k}$ whose images lie in $H$ remain smooth after restricting the codomain to $H$.

All we know is that $H$ is an \emph{immersed} submanifold of $\til H_{>k}$. In general, it is not true that for arbitrary immersed submanifolds, restriction of the codomain preserves smoothness. Immersed submanifolds which possess this special property are called \emph{weakly embedded submanifolds} (Some authors call them \emph{initial submanifolds}, or \emph{diffeological submanifolds}).

Thus, the relevant question in our context is as follows:
\begin{quote}
Is $H=\psi(H_{>k})$ a weakly embedded submanifold  of $\tilde H_{>k}$? 
\end{quote}

Weakly embedded here means that for every manifold $Q$ and for every smooth map $h:Q \to \tilde H_{>k}$, with $h(Q)\subset H$,the associated map $h:Q\to H$ is also smooth. In other words, it is always valid to restrict the range. 

The presence of certain algebraic structure implies a submanifold is weakly embedded; e.g. every orbit of a Lie group action is weakly embedded (see Theorem 5.14 in \cite{kolar1999natural} or \cite{iglesias2012smooth}). Here, however, $H$ is the image of \emph{more than one} orbit under the natural action of $\GL(V) \times \GL(V)$.

Finally, we note that a possible attempt to remedy the situation is to consider as a range for $\psi$, the subset $\cup_{r=k+1}^d \tilde  H_{\binom {r}{k}}$, instead of $\tilde H_{>k}=\cup_{i=k+1}^{\binom{d}{k}} \til H_i$. (That is, take only the admissible ranks). In that case, the image of $\psi$ is closed in the range. However, the problem now is that the new range, which is composed of unions of matrices of \emph{different} ranks, with gaps between the ranks, is no longer a submanifold of $\End(\extp^k V)$. Thus tools from differential topology, like the inverse function theorem, are no longer available.

To summarize the problems of the map $\psi:H_{>k} \to  \tilde H_{>k}$:

\begin{itemize}
\item Its image is not closed in $\tilde H_{>k}$.
\item It is not clear whether or not the image is weakly embedded.
\end{itemize}

It turns out though that there is something which could be done; in all the "problems" we encountered, there was a divergent series in the background. It turns out that when restricting the norm of the allowed endomorphisms, everything works fine. This leads to a regularity result for maps in $W^{1,\infty}$, as we present in the next subsection.
%\Asaf{Open a short discussion on weakly embedded submanifolds...}

\subsection{Regularity via minors for $f \in W^{1,\infty}$}

%\Asaf{Add a comment: Smoothness of the minors does not imply $f \in W^{1,\infty}_{loc}$, exactly because of the reason stated in "why soft analysis"}

\label{subsec:Regularity_via_minors_bounded_case} 

We begin with the following lemma:
\begin{lemma}
Let $V$ be a $d$-dimensional real vector space, and let $2 \le k \le d-1$ be a fixed odd  integer. Let $H_{>k} , \tilde H_{>k}$ be as in \defref{def:space_of_endomorphisms_larger_rank} and \defref{def:space_of_exterior_endomorphisms_larger_rank}. Let $M>0$. Then
\[
\psi:H_{>k} \cap B_M(0) \to  \tilde H_{>k}
\]
is proper, when $B_M(0)$ is the closed ball of radius $M$ in $\End(V)$, w.r.t some norm on $\End(V)$ (all the norms on a finite-dimensional space are equivalent, so the norm does not matter).
\end{lemma}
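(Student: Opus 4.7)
The plan is to take an arbitrary compact $K\subseteq \tilde H_{>k}$ and a sequence $A_n\in \psi^{-1}(K)\cap [H_{>k}\cap B_M(0)]$, and show that $A_n$ admits a subsequence converging to some element of $H_{>k}\cap B_M(0)$. The role of the bound $M$ is precisely to eliminate the obstruction encountered in the previous subsection: the counterexamples to properness of $\psi:H_{>k}\to\tilde H_{>k}$ all involved divergent sequences of eigenvalues. Once the norm is constrained, compactness in $\End(V)$ comes for free, and the only remaining issue is rank collapse in the limit.

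First, since $A_n$ lies in the closed ball $B_M(0)\subseteq \End(V)$, a standard Bolzano--Weierstrass argument extracts a subsequence (still denoted $A_n$) converging to some $A\in \End(V)$, and $A\in B_M(0)$ since the ball is closed. By continuity of $\psi$, the images $\psi(A_n)=\extp^k A_n$ converge to $\extp^k A$. Since $\psi(A_n)\in K$ and $K$ is compact (hence closed in $\End(\extp^k V)$), the limit $\extp^k A$ lies in $K$, and in particular in $\tilde H_{>k}$, so $\rank(\extp^k A)>k$.

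The main --- and essentially only --- step to verify is that $A$ itself still has rank $>k$. For this I use the identity $\rank(\extp^k A)=\binom{\rank(A)}{k}$, noted earlier in the paper. If we had $\rank(A)\le k$, then $\rank(\extp^k A)\le \binom{k}{k}=1$, which contradicts $\rank(\extp^k A)>k\ge 2$. Therefore $\rank(A)>k$, i.e.\ $A\in H_{>k}$. Combining with $A\in B_M(0)$, we conclude $A\in H_{>k}\cap B_M(0)$ and $\psi(A)\in K$, so $A\in \psi^{-1}(K)$, which establishes properness.

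The only part requiring care is the combinatorial step, and it is genuinely elementary here because $k\ge 2$; for $k=1$ the argument would fail (in that case the analogous inequality $\binom{1}{1}=1>1$ is false), but since the lemma restricts to $k\ge 2$ (in fact $k$ odd), no difficulty arises. Thus the bounded-norm restriction cleanly circumvents both obstructions identified in Section~\ref{subsec:obstructions_to_generalize_constant_rank} (non-closedness of the image and failure of properness), with the rank-in-the-limit issue handled purely by the exterior-power rank identity.
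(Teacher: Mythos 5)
Your proof is correct and takes essentially the same approach as the paper: extract a convergent subsequence via compactness of the closed ball, observe that the limit of the exterior powers lies in the compact set $K\subseteq \tilde H_{>k}$, and use the rank identity $\rank(\extp^k A)=\binom{\rank A}{k}$ to conclude the limit stays in $H_{>k}$. You simply spell out the combinatorial step $\rank A\le k \Rightarrow \rank(\extp^k A)\le 1$ a bit more explicitly than the paper does.
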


\begin{proof}
Let $K \subseteq \tilde H_{>k}$ be compact, and let $A_n \in \psi^{-1}(K)$. Since $A_n \in B_M(0)$ is bounded, it converges after passing to a subsequence, so $A_n \to A \in \End(V)$. 

Since $\bigwedge^k A_n \to \bigwedge^k A \in K \subseteq \tilde H_{>k}$ and $\binom {\operatorname{rank}(A)}{k}=\rank(\bigwedge^kA) >k $, $\operatorname{rank}(A) >k$, i.e. $A \in H_{>k}$. Since $B_M(0)$ is closed, it follows that $A \in H_{>k} \cap B_M(0)$ and we are done.

%we can write $\rank(\bigwedge^k A)=\binom{r}{k}$ for some $r>k$. Now, $\binom {r}{k}=\rank(\bigwedge^kA) = \binom {\operatorname{rank}(A)}{k}$, so $\operatorname{rank}(A)=r$, that is $A \in H_{>k}$. Since $B_M(0)$ is closed, it follows that $A \in H_{>k} \cap B_M(0)$ and we are done.

\end{proof}

At this stage we continue as in the previous two cases: Since $\psi$ is proper 
and an injective immersion (see Appendix \ref{sec:minors_map_immersion}), it is an embedding and its image is a \emph{closed embedded} submanifold (this is again elementary topology, as in  \corref{cor:image_exterior_map_is_closed_embedded}). Thus, $\psi$ is a diffeomorphism when restricted to its image, etc. Hence, all the components required for the regularity theorem are in place.

So, we can prove the same regularity result under the assumption $|df|$ is essentially bounded, i.e. $df \in L^{\infty}(\Omega,\R^{d^2})$. Since $df \in L^{\infty}(\Omega,\R^{d^2})$ implies $f \in W^{1,\infty}$ we arrive at the following version:
%The reason is that if $\extp^k df \in C^{\infty}$, then $\|\extp^k df \|_{\infty} \le \binom{d}{k}\|df\|_{L^\infty}^k < \infty$. (Indeed, $\|\extp^k df \|_{\infty} \le \binom{d}{k}\|df\|_{L^\infty}^k$ almost everywhere by a pointwise estimate, and by continuity this holds everywhere).

\begin{theorem}[Regularity via minors (odd $k$, bounded case)]
\label{thm:regularity_via_minors_odd_k_bounded_case}
Let $d>2$, and let $\Omega$ be an open subset of $\R^d$. Let $2 \le k \le d-1$ be a fixed odd integer.

Let $f \in W^{1,\infty}(\Omega,\mathbb{R}^d)$ and suppose that $\extp^k df \in \tilde H_{>k}$ is smooth. Then $f$ is smooth.
\end{theorem}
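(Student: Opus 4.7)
The plan is to adapt the template used in the proofs of Theorems \ref{thm:regularity_via_minors_unique_case} and \ref{thm:regularity_via_minors_odd_k_case} to the bounded setting. Since $f \in W^{1,\infty}(\Omega,\mathbb{R}^d)$, there exists $M > 0$ such that $|df_x| \le M$ almost everywhere; enlarging $M$ slightly if desired, the preceding lemma gives that $\psi:H_{>k} \cap B_M(0) \to \tilde H_{>k}$ is proper. Since $k$ is odd, $\psi$ is injective by \corref{cor:almost_injectivity_of_exterior_map_large_rank_case}, and it is an immersion by the discussion in \secref{subsec:obstructions_to_generalize_constant_rank} (see Appendix \ref{sec:minors_map_immersion}).

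Combining these three properties via elementary topology (as in the proof of \corref{cor:image_exterior_map_is_closed_embedded}), $\psi$ is a smooth embedding and its image $H := \psi(H_{>k} \cap B_M(0))$ is a closed embedded submanifold of $\tilde H_{>k}$. Consequently, $\psi:H_{>k}\cap B_M(0) \to H$ is a diffeomorphism. With this in hand, consider
\[
\phi:\Omega \to \tilde H_{>k}, \qquad \phi(x) = \extp^k df_x,
\]
which is smooth by hypothesis. On the set of full measure where $df$ is classically defined and bounded in norm by $M$, we have $\phi(x) \in H$. Hence $\phi^{-1}(H)$ is dense in $\Omega$, and since $H$ is closed in $\tilde H_{>k}$, we conclude $\phi^{-1}(H) = \Omega$. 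Because $H$ is embedded, $\phi$ remains smooth after restricting the codomain to $H$ (Corollary 5.30 in \cite{Lee13}). Composing with $\psi^{-1}: H \to H_{>k} \cap B_M(0)$ then produces a smooth representative of $df$ on all of $\Omega$, which yields smoothness of $f$.

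The key step, and the one motivating this entire subsection, is the properness supplied by the preceding lemma. Without the $L^\infty$ bound, \secref{subsec:obstructions_to_generalize_constant_rank} shows that $\psi$ on the full $H_{>k}$ fails to be proper, its image fails to be closed in $\tilde H_{>k}$, and restriction-of-codomain is no longer automatic. The $W^{1,\infty}$ hypothesis rules out precisely the divergent sequences that obstruct properness (such as the $A_n=\text{diag}(n,n^{-1/2},\dots,n^{-1/2})$ example), so nothing else in the previous arguments needs to change.
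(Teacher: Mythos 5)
Your proof is correct and follows essentially the same route as the paper: write $M=\operatorname{ess\,sup}|df|$, invoke the properness lemma for $\psi:H_{>k}\cap B_M(0)\to \tilde H_{>k}$ together with injectivity (odd $k$) and the immersion property to get a closed embedded image $H$, then conclude $\phi^{-1}(H)=\Omega$ by the closed-and-dense argument and finish by restricting the codomain and composing with $\psi^{-1}$. The paper only sketches this adaptation, and your write-out fills in the same steps; like the paper, you gloss over the minor technical point that $H_{>k}\cap B_M(0)$ is a manifold with boundary (so $H$ is an embedded submanifold with boundary), but this does not affect the validity of the restriction-of-codomain step.
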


\begin{proof}
Since this is very similar to previous versions, we only describe a sketch of the adaptation required.
Consider again the map
\[
\phi: x \to \bigwedge^k df_x \,\,,\,\, \phi:\Omega \to \til H_{>k}.
\]
Write $M=\text{ess} \,  sup  | df | < \infty$, and set $H=\psi(H_{>k} \cap B_M(0))$ where $\psi:H_{>k} \cap B_M(0) \to  \tilde H_{>k}$ is the $k$-minors map. Then $H$ is closed in $H_{>k}$. Hence $\phi^{-1}(H)$ is closed and dense in $\Omega$, so $\phi^{-1}(H)=\Omega$. The rest of the proof now continues as before.
\end{proof}

%
%\Asaf{When you restrict the norm of the endomorphisms, $\psi:H_{>k} \to  \tilde H_{>k}$ becomes proper, hence the image is again a closed embedded submanifold... and everything carries through... Thus the result is established for $df \in L^{\infty}$, which is equivalent to $W^{1,\infty}$...}

%
%\Asaf{Is equivariant immersion by parts, with a finite number of orbits, an immersion? Add comment on the fact $\psi$ is an immersion on each orbit.... but this does not prove it is a an immersion on the open union}

\section{Discussion}
\label{sec:sec_discuss}

\subsection{The necessity of the assumption $\bigwedge^k df \in \GL(\bigwedge^k \R^d)$ }
\label{sec:invertibility_assumption}

In the proof of Theorem \ref{thm:regularity_via_minors_unique_case}, we assumed that $\bigwedge^k df \in \GL(\bigwedge^k \R^d)$ everywhere on $\Omega$. We used this assumption when we restricted the codomain of the map $x \to \bigwedge^k df_x$ to $H$, (before composing it with $\psi^{-1}$).  Of course, the weak condition $\det df>0$ a.e. together with $\bigwedge^k df \in C^{\infty}$ do not imply $\bigwedge^k df \in \GL(\bigwedge^k \R^d)$; the rank can fall on a subset of measure zero, as the following example shows: %that this assumption cannot be omitted:
\[
f:\R^2 \to \R^2 \, , \, f(x,y)=(x^2-y^2,2xy) .
\]
Then $\det df=4(x^2+y^2)$ vanishes at $(0,0)$.

%\Asaf{Should I omit this example? it's rather trivial.}
% is zero on $\R^2 \cap \{x=y=0\} $. %, which is a set of measure zero. 
%\Asaf{ I commented out some stuff.}
%\[
%f:\mathbb{D}^3 \to \R^3 \, , \, f(x,y,z)=(x^2-y^2,2xy,z) ,
%\]
%where $\mathbb{D}^3$ is the open unit ball in $\R^3$. Then $\det df=4(x^2+y^2)$ is zero on $\mathbb{D}^3 \cap \{x=y=0\} $. %, which is a set of measure zero. 
%In particular, $f \in W^{1,p}(\mathbb{D}^3,\R^3)$ and $\det df>0$ almost everywhere. Of course, at the points where $\det df=0$, $\extp^2 df \notin \GL(\extp^2 \R^3)$.

%\begin{comment}
%This example is really ``two-dimensional", arising from the map $z \to z^2$ in the complex plane. This is a ``thickened" version of it, in order to make it relevant to our context, which is dimension greater than two.
%\end{comment}

This raises the following \emph{open question:}
\begin{quote}
Does Theorem \ref{thm:regularity_via_minors_unique_case}  hold after removing the assumption $\bigwedge^k df \in \GL(\bigwedge^k \R^d)$? 
\end{quote}

By Theorem \ref{thm:regularity_via_minors_unique_case_a.e._assumptions} we know that $f$ must be smooth on an open subset of full measure. The question is whether or not it must be smooth on the whole domain. If not, perhaps one could say something about the Hausdorff dimension of the singular set.

%As a partial result, we have the following:
%Of course, there is always something which can be said:

%\Asaf{This should be removed from here, only referred to...}
%\begin{theorem}[Regularity via minors]
%\label{thm:regularity_via_minors_unique_case_a.e._assumptions}
%Let $d>2$ and let $\Omega \subseteq \R^d$ be open. Let $2 \le k \le d-1,p \ge 1$. Let $f \in W^{1,p}(\Omega,\mathbb{R}^d)$ and suppose that either $\det df>0$ or $\det df<0$ a.e. and that $\extp^k df \in \End(\extp^k \R^d)$ is smooth. 
%Then, if $k,d$ are not both even, $f$ is smooth on an open dense subset. 
%If $k$ is odd, the assumption on $\text{sgn} (\det df)$ can be omitted.
%\end{theorem}

%This version follows immediately from theorem \ref{thm:regularity_via_minors_unique_case}:  
%
%Define $\Omega_0=\{ x \in \Omega \, | \, \bigwedge^k df_x \in \text{GL}(\bigwedge^{k}\mathbb{R}^d) \}$.  
%$\Omega_0$ is open and has full measure in $\Omega$. Now apply theorem \ref{thm:regularity_via_minors_unique_case} to $f|_{\Omega_0}$.

%We note that this refined version will be important in the applications for regularity of weakly conformal maps, see COMPLETE
\subsection{The case where $k,d$ are even}
\label{sec:k_d_even_case}
Suppose that $k,d$ are both even, and that all the other assumptions of Theorem \ref{thm:regularity_via_minors_unique_case} hold.
It is not clear whether $f$ should be smooth in this case as well; let $A \in \GL^+(\mathbb{R}^d)$. Since $\extp^k A=\extp^k (-A)$ and both $A,-A \in \GL^+(\mathbb{R}^d)$, the $k$-minors cannot distinguish between a map and its additive-inverse. Theoretically, $df$ could ``zig--zag" in some non smooth fashion. It is worth noting, however, that $df$ cannot switch between a given fixed invertible matrix $A$ and its negative alone. Indeed, if the gradient of a non-affine Sobolev function takes only the values $A$ and $B$, then necessarily $A-B$ is a rank one matrix. (this is Proposition 2.1 in \cite{muller1999variational}).

\emph{Open question:}
Is there a counter example for smoothness in this case? i.e. does Theorem \ref{thm:regularity_via_minors_unique_case} still hold?  %The smallest dimensions where one should look for counter examples are $d=4,k=2$. 

\subsubsection{Reduction to powers of two}

We note that the general case of even $d,k$ can be reduced to values of $k$ which are powers of $2$. Indeed, let us be a little more precise with the notations:

Let $d,k$ be even, $1<k<d$. Suppose $k=2^r m$, where $m$ is odd. We shall prove that  if the $k$-minors are smooth, then so are the $2^r $-minors.

For a natural number $s$, define $\psi_s:\GL^+(\R^d) \to \GL(\extp^s \R^d)$ by $\psi_s(A)= \extp^s A$, and set $H_s=\image \psi_s$. 

Consider the map $P: H_{2^r} \to H_k$ defined by the formula
\[
\extp^{2^r} A \to \extp^{k}A.
\]
It is well defined, since if $\extp^{2^r} A=\extp^{2^r} B$ then by \lemref{lem:almost_injectivity_of_exterior_map_invertible_case} $A=\pm B$. Since $k$ is even, $\extp^{k}A=\extp^{k}B$.

Furthermore, $P$ is injective: By the same argument above, if $\extp^{k}A=\extp^{k}B$, then $A=\pm B$, so $\extp^{2^r} A=\extp^{2^r} B$.

By \lemref{lem:exterior_map_is_proper} and \corref{cor:image_exterior_map_is_closed_embedded},  $\psi_s$ is a  smooth locally injective proper map, and $ H_s$ is a closed embedded submanifold of $\GL(\extp^s\R^d)$. %Furthermore, $H_s$ is clearly a subgroup of $\GL(\extp^s\R^d)$

%, and so by the closed subgroup theorem, it is an embedded submanifold. (Actually, there is a more elementary argument here: https://math.stackexchange.com/questions/616937/question-about-image-of-proper-smooth-map-of-constant-rank-undergraduate)

%\Asaf{Perhaps there is even a more elementary argument since $\psi_s$ is also  proper immersion and each image point have only two sources...? Check! $\psi_s$ is an immersion since it is locally injective and of constant rank-homomorphism....Maybe I should this more general argument in section 2 replacing \corref{cor:image_exterior_map_is_closed_embedded}.}

$P: H_{2^r} \to H_k$ is a smooth bijective homomorphism of equidimensional Lie groups, which is also an immersion. Thus it is a diffeomorphism.

%Thus, $\psi_k:\GL^+(\R^d) \to H_k$ is a diffeomorphism.

Now, suppose $f$ has all the properties assumed in Theorem \ref{thm:regularity_via_minors_unique_case}. In particular, $\extp^k df \in \GL(\extp^k \R^d)$ is smooth. The argument in the proof of Theorem \ref{thm:regularity_via_minors_unique_case} showed that $\extp^k df \in H_k$ and that 
\[
\tilde \phi: x \to \bigwedge^k df_x \,\,,\,\, \tilde \phi:\Omega \to H_k
\]
is smooth. Composing with $P^{-1}$ we obtain that the map

\[
\tilde \phi: x \to \bigwedge^{2^r} df_x \,\,,\,\, \tilde \phi:\Omega \to H_{2^r}
\]
is smooth. Thus, if the $k$-minors for $k=2^r m$ are smooth, then so are the $2^r $-minors.

%Now, as mentioned above, $\psi_k:\GL^+(\R^d) \to H_k,\psi_{2^r}:\GL^+(\R^d) \to H_{2^r}$ are  diffeomorphisms. Thus, the map $\psi_k \circ \psi_k:H_k \to H_{2^r}$ is a

%Finally, since $\psi:\GL^+(\R^d) \to H$ is a diffeomorphism, we deduce that the following map $\Omega \to \GL^+(\mathbb{R}^d)$,
%\[
%\Omega: x \mapsto \psi^{-1} \circ \tilde \phi(x)=df_x
%\]
%is smooth. (More precisely, $\psi^{-1} \circ \tilde \phi$ is smooth and coincides with $df$ almost everywhere ).

\subsubsection{The case of continuous weak derivatives}

Even though we do not have a full solution yet for the case of even $k$ and $d$, we have the following partial result: 
\begin{theorem}
\label{thm:regularity_via_minors_non_unique_case}
Let $f \in W^{1,p}(\Omega,\mathbb{R}^d)$ and assume that $\det df>0$ a.e. or $\det df<0$ a.e. and that $\extp^k df  \in \GL(\extp^k \R^d)$ is smooth. Assume in addition that the weak derivatives of $f$ are continuous. Then $f$ is smooth.
\begin{comment}
\begin{itemize}
\item Since the weak derivatives are continuous, the assumptions imply $\det df \neq 0$ everywhere.
\item Continuous weak derivatives imply $C^1$.
\end{itemize}
\end{comment}
\end{theorem}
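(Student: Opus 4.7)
The plan is to use the continuity of $df$ to lift the smooth map $\extp^k df$ along the immersion $\psi: \GL^+(\R^d) \to \GL(\extp^k \R^d)$, $A \mapsto \extp^k A$. The even/even difficulty is that $\psi$ is $2$-to-$1$ on $\GL^+(\R^d)$, identifying $A$ with $-A$; nevertheless it remains a local diffeomorphism onto its image, and the continuity of $df$ chooses the correct branch coherently.

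First I would verify that, under the hypotheses, $df$ is genuinely continuous as a map $\Omega \to \GL^+(\R^d)$. Invertibility of $\extp^k df_x$ forces invertibility of $df_x$ at every $x$ (by the multiplicativity $\extp^k(AB) = \extp^k A \circ \extp^k B$), and a continuous, nowhere-singular map whose Jacobian is positive almost everywhere has Jacobian positive throughout each connected component of $\Omega$. Next, by Corollary \ref{cor:exterior_map_properties}, $\psi$ is a smooth immersion; so by the local immersion theorem, every $A_0 \in \GL^+(\R^d)$ admits an open neighborhood $U$ for which $V := \psi(U)$ is an embedded submanifold of $\GL(\extp^k \R^d)$ and $\psi|_U: U \to V$ is a diffeomorphism.

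Given $x_0 \in \Omega$, set $A_0 = df_{x_0}$ and choose $U,V$ as above. By continuity of $df$ there is a neighborhood $W$ of $x_0$ with $df(W) \subset U$, so that the smooth map $\phi: x \mapsto \extp^k df_x$ sends $W$ into the embedded submanifold $V$. Corollary 5.30 of \cite{Lee13} then guarantees that $\phi|_W : W \to V$ remains smooth after restricting the codomain, and hence
\[
df|_W \;=\; (\psi|_U)^{-1} \circ \phi|_W \, : \, W \longrightarrow \GL^+(\R^d)
\]
is smooth as a composition of smooth maps. Since $x_0$ was arbitrary, $df$ is smooth throughout $\Omega$, and therefore so is $f$.

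The essential role of the continuity assumption --- and the only real obstacle that the statement presents --- is to coherently pick a branch of $\psi^{-1}$. Without continuity, the weak derivative could in principle flip measurably between $A$ and $-A$ on sets of positive measure, precluding any local lift. Once continuity is in hand, the argument reduces to a routine application of the local immersion theorem together with the standard restriction-of-codomain fact for embedded submanifolds, and I do not anticipate any further substantive obstacle.
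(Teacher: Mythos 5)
Your proposal is correct and takes essentially the same approach as the paper's proof, which briefly invokes the local injectivity of $\psi$ together with the continuity of $df$ to pick a well-defined branch of $\psi^{-1}$. You have merely spelled out the details the paper leaves implicit: the local immersion theorem furnishing a neighborhood $U$ with embedded image $V = \psi(U)$, the continuity of $df$ to ensure $\phi(W)\subset V$, and the restriction-of-codomain fact for embedded submanifolds used to make the local lift $(\psi|_U)^{-1}\circ\phi|_W$ smooth.
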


\begin{proof}
Let $x \in \Omega$. We shall prove $f$ is smooth in a neighbourhood of $x$. Repeating the proof of Theorem \ref{thm:regularity_via_minors_unique_case}, we use the local injectivity  of $\psi$ instead of its global injectivity, which no longer holds. In other words, $\psi$ is locally invertible, and the local inverse is smooth. Since $x \to df_x$ is continuous, and in particular well-defined at every point, we know "which branch of the inverse to choose".
\end{proof}

\begin{corollary}
Every possible counter-example to Theorem~\ref{thm:regularity_via_minors_unique_case} in the case where $k,d$ are both even must have non-continuous weak derivatives.
\end{corollary}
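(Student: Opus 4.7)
The plan is to follow the strategy of Theorem \ref{thm:regularity_via_minors_unique_case}, replacing the global embedding $\psi:\GL^+(\R^d)\to\GL(\extp^k\R^d)$ by a local inverse, made globally coherent by the continuity of $df$. The point is that even when $k,d$ are both even, Corollary \ref{cor:exterior_map_properties} still provides local injectivity of $\psi$, and local injectivity is all one needs if a continuous selection of $df$ is available in advance.

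First I would observe that, since $df$ has a continuous representative and $\det df\neq 0$ on a dense subset of $\Omega$ (by the hypothesis on $\sgn(\det df)$ combined with $\extp^k df\in \GL(\extp^k\R^d)$), the function $x\mapsto \det df_x$ is continuous and nonzero on a dense subset; the a.e.\ sign condition together with continuity of the determinant then forces $df_x\in\GL^+(\R^d)$ everywhere (resp.\ $\GL^-(\R^d)$). In particular $df$ is a continuous map $\Omega\to\GL^\pm(\R^d)$.

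Next, by Corollary \ref{cor:exterior_map_properties}, $\psi$ is a smooth locally-injective immersion between equidimensional smooth manifolds, hence a local diffeomorphism at every point. Fix $x_0\in\Omega$ and choose an open neighbourhood $U\subseteq\GL(\R^d)$ of $df_{x_0}$ on which $\psi|_U$ is a diffeomorphism onto the open set $W:=\psi(U)\subseteq\GL(\extp^k\R^d)$. By continuity of $df$, there is an open neighbourhood $V\subseteq\Omega$ of $x_0$ such that $df(V)\subseteq U$. On $V$ we then have the pointwise identity
\[
df_x \;=\; (\psi|_U)^{-1}\!\bigl(\extp^k df_x\bigr),
\]
which exhibits $df$ on $V$ as the composition of the smooth map $x\mapsto\extp^k df_x$ with the smooth map $(\psi|_U)^{-1}:W\to U$. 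Hence $df$ is smooth on $V$, and since $x_0$ was arbitrary, $df$ is smooth on all of $\Omega$, so $f$ is smooth.

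There is no serious obstacle in this argument; all the work was already done in establishing that $\psi$ is a local diffeomorphism (Corollary \ref{cor:exterior_map_properties}) and in reducing the problem to picking a branch of $\psi^{-1}$. The only mild subtlety is the preliminary step of promoting the a.e.\ sign assumption to an everywhere statement, so that a single branch of $\psi^{-1}$ suffices locally; this is exactly where the continuity of $df$ is used in an essential way, and it is also precisely what fails in the general case of Theorem \ref{thm:regularity_via_minors_unique_case} for even $k,d$, where $df$ could in principle jump between a value $A$ and $-A$ in a measurable but discontinuous fashion.
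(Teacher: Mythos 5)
The overall strategy is the same as the paper's: use local injectivity of $\psi$ together with the given continuous selection $x\mapsto df_x$ to pick a branch of $\psi^{-1}$, mirroring the proof of Theorem~\ref{thm:regularity_via_minors_non_unique_case}. However, there is a concrete error in the way you implement the local inversion.

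You assert that $\psi:\GL(\R^d)\to\GL(\extp^k\R^d)$ is ``a smooth locally-injective immersion between equidimensional smooth manifolds, hence a local diffeomorphism at every point,'' and consequently that $W:=\psi(U)$ is an \emph{open} subset of $\GL(\extp^k\R^d)$. This is false for the range $2\le k\le d-1$ under consideration: $\dim\GL(\R^d)=d^2$ while $\dim\GL(\extp^k\R^d)=\binom{d}{k}^2$, and these agree only when $k\in\{1,d-1\}$. Already for $k=2$, $d=4$ one has $16\ne 36$. So $\psi$ is a (locally injective) immersion of strictly lower dimension, $\psi(U)$ is a positive-codimension subset, not an open set, and $(\psi|_U)^{-1}$ is \emph{not} a smooth map defined on an open neighbourhood in the ambient $\GL(\extp^k\R^d)$. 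The composition $x\mapsto(\psi|_U)^{-1}\bigl(\extp^k df_x\bigr)$ is therefore not obviously smooth, which is precisely the delicate point the paper has to address.

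To repair the argument you must treat $\psi(U)$ as a \emph{submanifold}, not an open set, and justify that restricting the codomain of the smooth map $x\mapsto\extp^k df_x$ to $\psi(U)$ preserves smoothness. This is automatic only if $\psi(U)$ is an \emph{embedded} submanifold (the paper discusses exactly this subtlety in \secref{subsec:obstructions_to_generalize_constant_rank}). Concretely one can argue that for $U$ small with compact closure on which $\psi$ is injective, $\psi|_U$ is an embedding onto $\psi(U)$; or, closer to what the paper implicitly does, note that even when $k,d$ are both even the induced map from $\GL^+(\R^d)/\{\pm\id\}$ is an injective proper immersion, so the global image $H$ is a closed embedded submanifold of $\GL(\extp^k\R^d)$ and $\psi:\GL^+(\R^d)\to H$ is a two-sheeted covering (local diffeomorphism onto $H$). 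Then one shows $\tilde\phi:\Omega\to H$ is smooth as in the proof of Theorem~\ref{thm:regularity_via_minors_unique_case}, and uses the continuous representative of $df$ as a continuous lift through this covering; a continuous lift of a smooth map through a local diffeomorphism is smooth. Your preliminary step (promoting the a.e.\ sign condition to an everywhere statement) is fine, though the cleanest justification is that the continuous $\extp^k df$ coincides everywhere with the given smooth $\GL$-valued representative, forcing $df_x$ invertible for all $x$, after which continuity of $\det df$ and the a.e.\ sign condition fix the sign on each connected component.
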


\subsection{Explicit inversion formula}
Consider again 
\[
\psi:H_{>k} \to  \tilde H_{>k} \subseteq \End(\extp^k V) \, \,, \, \, \psi(A)=\extp^{k}A.
\]
We know this map is injective. It is natural to ask whether there exist an explicit formula for $\psi^{-1}$.

If we had such a closed form formula, we could deduce the smoothness of the original map without going through the detour of the abstract machinery of differential topology. However, it seems that such a formula is only available in some very special cases, as we now describe:

Set $k=d-1$. For $A \in H_{>d-1}=\GL(V)$, $\psi(A) \in \text{GL}(\bigwedge^{d-1}V)$ can be identified with the cofactor matrix of $A$. %(The identification is done using the Hodge dual $\star:\bigwedge^{d-1}V \to V$).

A well-known identity, relating the cofactor of a matrix to its determinant, is 
\[
(\Cof A)^T \cdot A = \det A \cdot \id.
\]
This identity implies $ \Cof(\Cof A)=(\det A)^{d-2}A$. Now, if $\Cof A=B$, then $\Cof B=(\det A)^{d-2}A$, and $\det(B)=(\det(A))^{d-1}$.

Since $k=d-1$ is odd, $\det(A)=(\det(B))^{\frac{1}{d-1}}$ is the unique $ d-1 $-th root of $\det B$. Thus,
\[ 
A=(\Cof)^{-1} B=(\det B)^{\frac{2-d}{d-1}} \Cof B  
\]
gives the formula for $\psi^{-1}$. This formula can also be adapted to the case when $k=d-1$ is even, assuming $A \in \GL^+$ (take the positive $d-1$ root of $\det B$.)

Unfortunately, trying to generalize this derivation for general $k$ hits a wall.
(although there is a partial generalization, when $k$ is relatively prime to $d$, see \cite{inversionformulawill}).

{\bfseries Acknowledgements}
We thank  Eric Wofsey for suggesting the proof of \propref{prop:kernels_convergence_perturb}, and to Alex Gavrilov for suggesting the general strategy of metric's approximation for the problem of building harmonic frames.

We are also grateful to Deane Yang, Anthony Carapetis, Yael Karshon, Jake Solomon, Cy Maor and Amitai Yuval for some useful comments and discussions. Finally, we thank Raz Kupferman for carefully reading this manuscript.

This research was partially supported by the Israel Science Foundation (Grant No. 1035/17), and by a grant from the Ministry of Science, Technology and Space, Israel and the Russian Foundation for Basic Research, the Russian Federation.

%From this perspective, it does look a bit surprising that we were able to obtain any regularity result, since all we could use was "soft" analysis.

%It would be interesting to find a concrete counter example for smoothness in this case. The smallest dimensions are $d=4,k=2$. Every possible counter-example must have non-continuous weak derivatives*.

%\appendix

%\subsection{A new simple proof of conformal regularity}
%
%The $k$-minors are smooth. Hence so is $|df|^k$, hence $df \in L_{loc}^{p}(\Omega,\mathbb{R}^{n^2}) $ for every $p > 1$.
%
%
%$df \in L_{loc}^{n}(\Omega,\mathbb{R}^{n^2}) $

\appendix

\section{Local existence of closed and co-closed frames}

\label{section:Local_existence_closed_co-closed_frames}
%We note that globally there need not exist non-zero harmonic forms. However, we shall show that the local situation is completely different.
In this section we prove the following theorem:
\begin{theorem}
\label{thm:existence_of_harmonic_frames}
Let $(\M,g)$ be a smooth Riemannian manifold, and let $p \in \M$. Let $1 \le k \le d$. There exist an open neighbourhood $U$ of $p$ that admits a local frame for $\bigwedge^k T^*\M$ consisting of closed and co-closed forms. 
\end{theorem}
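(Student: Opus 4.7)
My plan is a perturbation argument from the Euclidean model. Choose a coordinate chart $x : U \to \R^d$ around $p$ with $x(p) = 0$ and $g_{ij}(0) = \delta_{ij}$. For a small parameter $r > 0$, introduce rescaled coordinates $y = x/r$ and the rescaled metric $g^r(y) := g(ry)$ on the unit ball $B_1 \subset \R^d$; as $r \to 0$, $g^r \to g_0$ in $C^\infty(\overline{B_1})$, where $g_0$ is the Euclidean metric. It then suffices to construct a frame of closed and $g^r$-co-closed $k$-forms on a neighborhood of $0 \in B_1$ for some small $r$, and pull back by $x = ry$. The edge cases $k = 0$ and $k = d$ are trivial (take the constant function $1$ and a constant multiple of the volume form), so assume $1 \le k \le d - 1$.

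For each increasing multi-index $I$ with $|I| = k$, start from the flat closed and $g_0$-co-closed form $dy^I$ and seek a correction of the form $\omega^I = dy^I + d\beta^I$, where $\beta^I$ is a $(k-1)$-form on $B_1$; closedness of $\omega^I$ is then automatic, and after imposing the Hodge gauge $\delta_{g^r} \beta^I = 0$, co-closedness reduces to
\[
\Delta_{g^r} \beta^I = -\delta_{g^r} dy^I.
\]
The right-hand side is automatically co-closed (since $\delta_{g^r}^2 = 0$), which is the natural compatibility condition. I solve this on $B_1$ via the Morrey--Friedrichs boundary value problem for the Hodge Laplacian (relative boundary conditions: vanishing tangential parts of $\beta^I$ and of $\delta_{g^r}\beta^I$ on $\partial B_1$), under which $\Delta_{g^r}$ is self-adjoint with kernel isomorphic to $H^{k-1}(B_1, \partial B_1)$; this group vanishes for $1 \le k \le d-1$ by Poincar\'e--Lefschetz duality and contractibility of $B_1$, so $\Delta_{g^r}$ is invertible on the orthogonal complement of its kernel and standard elliptic regularity produces a smooth solution $\beta^I$. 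Applying $\delta_{g^r}$ to the equation shows that $\delta_{g^r} \beta^I$ is itself a harmonic form with vanishing boundary data, hence zero, so the Hodge gauge is automatic.

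Crucially, the forcing $-\delta_{g^r} dy^I$ vanishes in the limit $r \to 0$: it is a first-order expression in the derivatives of $g^r$, which are themselves $O(r)$. Schauder estimates applied on $B_1$ with the near-flat metric $g^r$ then give $\|\beta^I\|_{C^2(\overline{B_{1/2}})} \to 0$ as $r \to 0$, so in particular $\omega^I(0) = dy^I|_0 + (d\beta^I)(0)$ is arbitrarily close to $dy^I|_0$. Hence $\{\omega^I\}_{|I|=k}$ is linearly independent at $0$ and, by continuity, on a neighborhood of $0$; pulling back to $\M$ yields the desired frame near $p$. The main obstacle I expect is the careful invocation of Hodge theory on a manifold with boundary: specifically, verifying compatibility of the chosen boundary conditions with the Hodge gauge and the vanishing of the relevant cohomology groups. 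Both are classical (see for instance Schwarz's monograph on Hodge decomposition for manifolds with boundary), but they require some care in the functional-analytic setup, which is where most of the technical work will lie.
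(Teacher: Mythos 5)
Your approach is correct in its essentials, but it takes a genuinely different route from the paper. The paper's proof goes global: it extends the metric from a small neighborhood of $p$ to a metric on the torus $\mathbb{T}^d$ that is $C^1$-close to a flat one (Propositions~\ref{prop:approx_metric_by_flat}, \ref{lem:flat_metric_extension}, \ref{lem:metric_extension_lemma}), applies an abstract stability lemma for kernels of Banach-space operators with closed range (\propref{prop:kernels_convergence_perturb}), and exploits the coincidence $\dim H^k(\mathbb{T}^d) = \binom{d}{k} = \dim \extp^k T_p^*\mathbb{T}^d$ so that the harmonic forms of a nearby metric still evaluate at $p$ to a basis. By contrast, you stay local on the ball $B_1$, solve a Dirichlet boundary value problem for the Hodge Laplacian with relative boundary conditions, and use the vanishing of $H^{k-1}(B_1,\partial B_1)$ together with Schauder estimates on the rescaled near-flat metric $g^r$. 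What the paper's route buys is that it sidesteps Hodge theory on manifolds with boundary entirely, at the cost of the metric-extension machinery and the need to select a specific closed manifold ($\mathbb{T}^d$) on which $\dim H^k$ matches the fiber dimension. What your route buys is locality and a more concrete PDE to solve, at the cost of invoking the Morrey--Friedrichs theory (relative boundary conditions, $\mathcal{H}^{k-1}_D \cong H^{k-1}(B_1,\partial B_1)$, a posteriori Hodge gauge), which is a substantial technical edifice in its own right. Two small points: since $H^{k-1}(B_1,\partial B_1)=0$, the kernel of your operator is trivial, so the phrase ``invertible on the orthogonal complement of its kernel'' is just ``invertible''; and the step $\|\beta^I\|_{C^2(\overline{B_{1/2}})}\to 0$ as $r\to 0$ requires the Schauder constant (equivalently, the norm of the inverse of the Dirichlet Laplacian) to be bounded uniformly over the family $g^r$ --- this is a genuine continuity-in-the-metric statement playing the same role as the paper's \propref{prop:kernels_convergence_perturb}, and it does need an argument (e.g.\ a compactness/contradiction argument exploiting $g^r\to g_0$ in $C^\infty$ and the triviality of $\mathcal{H}^{k-1}_D(B_1,g_0)$), not merely a citation of Schauder theory for a single fixed operator. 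You flag this as the locus of the technical work, which is accurate.
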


\begin{comment}
Globally there need not exist any non-zero harmonic forms in general (there is a topological obstruction). This theorem shows that the local situation is different.
\end{comment}

\paragraph{Argument Sketch}

For the Euclidean metric this is immediate: We have the standard frame $dx^I$. Since every metric is \emph{locally} close to being Euclidean in small neighbourhoods, the idea is to use an approximation argument:

Given a Riemannian metric $g$, we denote the space of $g$-harmonic forms of degree $k$ by $H^k_{g}$.

We view $H^k_{g}$, as a subspace of $\Omega^k(\M)$ which is "changing continuously" with the metric. Suppose $g_{\ep} \to g_0$ where $g_0$ is the Euclidean metric; we shall show that $H^k_{g_{\ep}} \to H^k_{g_0}$ in a sense which will be described later. Since being a frame is an open condition (this is essentially like being an invertible matrix), this convergence will give us a frame consisting of elements of $H^k_{g_{\ep}}$ for $g_{\ep}$ sufficiently close to $g_0$.

%We view the space of $g$-harmonic forms, where $g$ is a Riemannian metric, as a subspace which is "changing continuously" with the metric. Suppose $g_{\ep} \to g_0$ where $g_0$ is the Euclidean metric; we shall show that $H^k_{g_{\ep}} \to H^k_{g_0}$ in a sense which we describe later. Since being a frame is an open condition (this is essentially like being an invertible matrix)- we will obtain a frame consisting of elements of $H^k_{g_{\ep}}$ for $g_{\ep}$ sufficiently close to $g_0$.

Even though the claim is local, and the approximation idea is also inspired by a local phenomena,
the implementation of the proof is based on a combination of local and global arguments. 

Here is the reason: On a closed manifold, being closed and co-closed is equivalent to being harmonic. Moreover, the dimension of the space of harmonic forms is a \emph{finite number} which is a \emph{topological invariant of the manifold}; it does not depend on the chosen metric. 

Thus, given a family of metrics $g_{\ep} \to g_0$ on a closed manifold $\M$, 
we consider the behaviour of the finite-dimensional subspaces $H^k_{g_{\ep}}$ (all of the same dimension) as $\ep \to 0$.

That is, we look at the map $g \to H^k_{g}=\ker \Delta_g$. As we shall see, this map is continuous in some appropriate sense; this relies on a certain \emph{stability property of kernels of linear operators}. 
That is, we shall study in what sense $T_n \to T$ implies $\ker T_n \to \ker T$. 
It turns out that a crucial factor in the existence of such a stability phenomenon is the assumption that all the kernels have \emph{the same finite dimension}. The convergence of kernels does not always hold when the dimensions are not equal or infinite.

This is the reason why we need to partially use a global perspective; a purely local approach won't work here, since locally, the space of closed and co-closed forms is infinite-dimensional. (see \cite{harmolocalinfdim}).

Since every metric is only \emph{locally} close to being Euclidean, we shall need to consider extension schemes for such "locally close" metrics into global metrics, while preserving their proximity. Due to a technical reason related to the dimension of $H^k_g$, the global topological manifold that we work with is the Torus; i.e. we view a Riemannian metric in a small neighbourhood as a metric on a small patch of the torus.

We now turn to implement the ideas just described. First, we need to understand the behaviour of kernels of operators under perturbation. We shall begin by formulating the exact stability result we have in mind; this framing would then lead us to the right norms to choose.

\paragraph{Structure of this section}

In Section \ref{susbsec:stability_kernels} we study the stability of kernels of operators. In Section \ref{susbsec:sensitivity_harmonic_metric_changes} we study the sensitivity of harmonic forms to metric changes. In Section \ref{subsec:Metric_extension_and_approximation} we prove various results regarding approximation of Riemannian metrics via Euclidean metrics. In Section \ref{subsec:proof_harmonic_frames} we combine the results of all the previous sections to prove  Theorem \ref{thm:existence_of_harmonic_frames}.

%This framing would guide us in the right direction, regarding the precise formulation of the rest of the puzzle, such as the right norms to choose etc.

\subsection{Stability of kernels of operators}

\label{susbsec:stability_kernels}

Throughout this section we shall use the following notation: $X,Y$ are real Banach spaces, and $B(X,Y)$ is the space of bounded linear operators $X \to Y$.
Given $T \in B(X,Y)$ the \emph{modulus} of $T$ is defined to be
\[
\gamma(T):=\inf \{ \,\|Tx\| \, \, | \, \, d(x,\ker T)=1 \}.
\]
It is known that if the image of $T$ is closed in $Y$, then $\gamma(T)>0$, see e.g. Theorem IV.1.6 in \cite{goldberg2006unbounded}. 

%Is there a counter-example for $\gamma(T)>0$ when $X,Y$ are not Banach?

\begin{proposition}
\label{prop:kernels_convergence_perturb}
Let $T \in B(X,Y)$ have a closed image, and suppose that $T_n \in B(X,Y)$ is a sequence converging to $T$ in the operator norm. Assume further that $\dim \ker T_n=\dim \ker T< \infty$, for every $n$.  Then $\ker T_n \to \ker T$ in the following sense: There exist bases $x^1_n,\dots,x^r_n$ for $\ker T_n$ such that $x^i_n \to x^i$, and $x^1,\dots,x^r$ form a basis for $\ker T$.

\end{proposition}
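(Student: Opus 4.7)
The plan hinges on the positivity of the modulus: since $T$ has closed image, $\gamma(T) > 0$, and by definition one has
\[
\|Tx\| \geq \gamma(T) \cdot d(x, \ker T) \quad \text{for all } x \in X.
\]
Applying this to $y \in \ker T_n$ via the identity $Ty = (T - T_n) y$ immediately yields $d(y, \ker T) \leq \gamma(T)^{-1} \|T - T_n\| \cdot \|y\|$, so every unit vector in $\ker T_n$ lies within $o(1)$ of $\ker T$. This is the ``upper half'' of kernel convergence and uses only the closed-image hypothesis; it gives no direct information about whether a prescribed element of $\ker T$ can be approximated from $\ker T_n$.

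The harder direction is exactly this reverse approximation. I would combine the upper half with Auerbach's lemma: for each $n$, choose a normalized basis $\{y_n^i\}_{i=1}^r$ of $\ker T_n$ with biorthogonal norm-one functionals $\{\phi_n^i\}$, and pick $z_n^i \in \ker T$ with $\|y_n^i - z_n^i\| \to 0$. I would then verify that $\{z_n^i\}$ is itself a basis of $\ker T$ for $n$ large: a relation $\sum_i c_i z_n^i = 0$ gives $\sum_i c_i y_n^i = \sum_i c_i (y_n^i - z_n^i)$, and applying $\phi_n^j$ together with biorthogonality yields $|c_j| \leq \max_i |c_i| \cdot \sum_i \|y_n^i - z_n^i\| \to 0$, which forces all $c_j$ to vanish. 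The same Auerbach estimate also bounds the linear map $(c_1,\dots,c_r) \mapsto \sum_i c_i z_n^i$ from below uniformly in $n$, so the coordinates of any fixed $x \in \ker T$ in the basis $\{z_n^i\}$ are uniformly bounded; the corresponding linear combination of the $\{y_n^i\}$ then lies in $\ker T_n$ and converges to $x$, which is the ``lower half'' $d(x, \ker T_n) \to 0$.

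Finally, I would fix an Auerbach basis $\{x^i\}$ of $\ker T$ with dual functionals $\{\phi^i\}$, use the lower half to choose $x_n^i \in \ker T_n$ with $\|x_n^i - x^i\| \to 0$, and note that the matrix $(\phi^i(x_n^j))_{i,j}$ converges to the identity. For $n$ large the $\{x_n^i\}$ are therefore linearly independent in the $r$-dimensional space $\ker T_n$ and hence form a basis; for the remaining finitely many indices any basis will do. The chief obstacle, and the step that crucially exploits \emph{both} the equal finite dimension hypothesis and the modulus bound, is the Auerbach bridging argument: without the dimension equality, the upper half only controls one-sided distance between subspaces and cannot be reversed, whereas the equality of $r$ allows one to parlay the quantitative upper bound into a genuine basis-to-basis approximation.
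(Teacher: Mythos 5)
Your proposal is correct, and it takes a route that is genuinely different from the paper's. The paper's proof is a direct compactness argument: it isolates the observation that any bounded sequence $x_n\in\ker T_n$ has a subsequence converging to a point of $\ker T$ (using $\|T x_n\|=\|(T-T_n)x_n\|\to 0$ together with $\gamma(T)>0$ and finite-dimensionality of $\ker T$), then chooses for each $n$ an ``almost orthonormal'' basis of $\ker T_n$ (unit vectors, each at distance $\ge 1$ from the span of the previous ones), passes to a subsequence along which each basis vector converges, and notes that the limit vectors inherit the almost-orthonormal spreading and hence are linearly independent, so by dimension count they form a basis of $\ker T$. Your argument instead establishes the two directions of subspace convergence quantitatively: the ``upper'' estimate $d(y,\ker T)\le \gamma(T)^{-1}\|T-T_n\|\,\|y\|$ for $y\in\ker T_n$, and then the reverse approximation $d(x,\ker T_n)\to 0$ for each $x\in\ker T$, obtained via Auerbach bases of $\ker T_n$ (with Hahn--Banach extended dual functionals), the equal-dimension hypothesis, and a coefficient bound. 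You then fix an Auerbach basis of $\ker T$ in advance, approximate each element from $\ker T_n$, and use convergence of the Gram-type matrix $(\phi^i(x_n^j))$ to the identity to see that the approximants form bases for large $n$.

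The comparison is worth noting. The paper's proof is shorter and uses minimal machinery, but as written it only extracts a subsequence along which the bases converge (the limit basis could a priori depend on the subsequence). Your proof is longer and relies on Auerbach's lemma, but it yields full-sequence convergence to a basis of $\ker T$ chosen once and for all, which is a mild strengthening. For the application in the paper (stability of harmonic forms, used only to produce frames for small $\epsilon$), either version suffices. One small point to make explicit in your write-up: the biorthogonal functionals $\phi_n^j$ live a priori on $\ker T_n$, so when you apply $\phi_n^j$ to $y_n^i-z_n^i\notin\ker T_n$ you are implicitly invoking Hahn--Banach norm-preserving extensions to $X$; likewise for the $\phi^i$ on $\ker T$ in the final step. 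That is standard but should be said.
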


\begin{comment}
This proposition is false when we drop the assumption $T$ has a closed image, even when $X=Y$ is a Hilbert space. A counter-example is given here \cite{gerwcontinuitykernel}.
\end{comment}

\begin{proof}
We begin with the following observation:
Let $(x_n)$ be a bounded sequence in $X$ such that $T_nx_n= 0$.  Then some subsequence of $(x_n)$ converges to an element of $\ker T$. 

Indeed, since $T_n\to T$ uniformly on bounded sets, $Tx_n\to 0$.  Since $\gamma(T)>0$,  and 
$ \|Tx\| \ge \gamma(T) d(x,\ker T)$
this implies that $d(x_n,\ker T)\to 0$.  We can thus choose $y_n\in\ker T$ such that $d(x_n,y_n)\to 0$.  Since $(x_n)$ is bounded, so is $(y_n)$.  Since $\ker T$ is finite-dimensional, some subsequence of $(y_n)$ converges to an element $y\in \ker T$.  The corresponding subsequence of $(x_n)$ then also converges to $y$.

Now, let $r=\dim \ker T$.  For each $n$, choose a basis $x^1_n,\dots,x^r_n$ of $\ker T_n$ which is "almost orthonormal" in the sense that for each $i$, $x^i_n$ is a unit vector and $d(x^i_n,\operatorname{span}(x^1_n,\dots,x^{i-1}_n))\geq 1$.  

By our observation above, we may pass to a subsequence and assume that for each $i$, $(x^i_n)$ converges to some $x^i\in\ker T$.  These $x^i$ will also be almost orthonormal, and so in particular will be linearly independent and thus form a basis of $\ker T$.

\begin{comment}
Here is how we can choose  a basis $x^1_n,\dots,x^r_n$ of $\ker T_n$ which is "almost orthonormal". The selection is made inductively:
Having chosen $x^1_n,\dots,x^{i-1}_n$, let $y \in \ker T_n$ be some vector of distance $1$ from their span. Modifying $y$ by an element of the span, we can assume that $y$ is a unit vector, which we then take to be $x^i_n$.
\end{comment}
\end{proof}

\subsection{Sensitivity of harmonic forms to metric changes }

\label{susbsec:sensitivity_harmonic_metric_changes}

We shall now use the abstract machinery of \propref{prop:kernels_convergence_perturb} in the context of harmonic forms on a manifold.

\begin{proposition}
\label{prop:stability_of_harmonic_forms}
Let $(\M,g_0)$ be a smooth closed Riemannian manifold. Let $g_{\ep}$ be a family of metrics on $\M$, and suppose that $g_{\ep} \to g_0$ in $C^1$ when $\ep \to 0$. Then $H_{g_{\ep}} \to H_{g_0}$ in the following sense: There exist bases for $H_{g_{\ep}}$ which converge to a basis of $H_{g_0}$ in the uniform $C^1$ norm.
\end{proposition}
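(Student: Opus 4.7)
My plan is to apply the abstract stability result (Proposition \ref{prop:kernels_convergence_perturb}) to a well-chosen first-order elliptic operator. Specifically, I would consider
\[
T_g\omega := (d\omega,\delta_g\omega), \qquad T_g: H^1(\M,\extp^k T^*\M) \to L^2(\M,\extp^{k+1}T^*\M)\oplus L^2(\M,\extp^{k-1}T^*\M),
\]
where the Sobolev norms are defined using the fixed reference metric $g_0$ (these norms are uniformly equivalent to the $g_\ep$-norms once $\ep$ is small, since $g_\ep \to g_0$ in $C^1$, so it does not matter which one we use). On a closed manifold a form is harmonic if and only if it is closed and co-closed, so $\ker T_g=H^k_g$, and by the Hodge theorem $\dim H^k_g$ equals the $k$-th Betti number of $\M$ which is a topological invariant independent of $g$. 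The principal symbol of $T_g$ at $\xi\ne 0$ is $\omega\mapsto(\xi\wedge\omega,\,\iota_\xi\omega)$, which is injective, so $T_g$ is elliptic and hence Fredholm with closed image. This verifies all hypotheses of Proposition \ref{prop:kernels_convergence_perturb} for each $T_{g_\ep}$ and the limit $T_{g_0}$.

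Next I would verify operator-norm convergence $T_{g_\ep}\to T_{g_0}$ in $\|\cdot\|_{H^1\to L^2}$. Since $d$ is metric independent, only $\delta_g$ varies with $\ep$; in local coordinates its coefficients are polynomial expressions in the entries of $g$, $g^{-1}$, and $\partial g$. Because $g_\ep\to g_0$ in $C^1$, these coefficients converge uniformly. Being first-order, this yields the operator-norm estimate
\[
\|(T_{g_\ep}-T_{g_0})\omega\|_{L^2}\le \eta_\ep\,\|\omega\|_{H^1}, \qquad \eta_\ep \to 0.
\]
Proposition \ref{prop:kernels_convergence_perturb} then produces bases $\{\omega_\ep^1,\ldots,\omega_\ep^r\}$ of $H^k_{g_\ep}$ converging in $H^1$ to a basis $\{\omega_0^1,\ldots,\omega_0^r\}$ of $H^k_{g_0}$.

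Finally I would bootstrap the $H^1$-convergence to uniform $C^1$-convergence. Each $\omega_\ep^i$ solves the elliptic system $d\omega_\ep^i=0$, $\delta_{g_\ep}\omega_\ep^i=0$; assuming (as is the case in the intended torus application) that the family $\{g_\ep\}$ is uniformly bounded in $C^m$ for every $m$, the classical interior estimates for the elliptic operator $T_g$ give uniform bounds on $\|\omega_\ep^i\|_{C^m}$. Combining these uniform higher-order bounds with the already-established $H^1$ convergence, Arzel\`a--Ascoli forces $\omega_\ep^i\to\omega_0^i$ in $C^1$ (indeed in $C^m$ for any $m$).

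The main obstacle, and the reason the proof is more delicate than it might first appear, is the mismatch between the regularity assumed on the metric perturbation and the natural order of the operator whose kernel consists of harmonic forms. Working directly with the second-order Hodge Laplacian $\Delta_g$ would introduce $\partial^2 g$ into its coefficients (through Christoffel symbols and curvature terms via the Weitzenb\"ock formula) and hence require $C^2$-convergence of metrics to obtain operator-norm convergence in any Sobolev or H\"older framework; factoring $\Delta_g$ as $T_g^*T_g$ and replacing $\Delta_g$ by the first-order operator $T_g=(d,\delta_g)$ is what makes the assumption $g_\ep\to g_0$ in $C^1$ suffice.
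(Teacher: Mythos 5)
Your first stage (getting convergence of kernels in $H^1$) is essentially the same device as the paper's, and it is correct: the paper also reduces to a first-order operator whose kernel is $H^k_g$ and applies the abstract kernel-stability lemma. The difference is that the paper takes the domain to be \emph{closed} forms and uses only $\delta_g$, while you take all forms and use $(d,\delta_g)$; this is an inessential difference. The genuine divergence is in the choice of norms. The paper equips the domain with the $C^1$-sup norm and the codomain with the sup norm, so that Proposition~\ref{prop:kernels_convergence_perturb} directly produces bases converging in the $C^1$-sup norm — which is exactly what the statement asks for and exactly what the torus application uses (pointwise evaluation $\omega^i_\ep(p)\to\omega^i(p)$).

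Your bootstrap from $H^1$ to $C^1$ is where there is a real gap. You patch it by assuming that $\{g_\ep\}$ is uniformly bounded in $C^m$ for every $m$, and you assert this holds ``in the intended torus application.'' It does not. In Lemma~\ref{lem:metric_extension_lemma} and Proposition~\ref{prop:approx_metric_by_flat}, $g_\ep$ is produced by the interpolation $g_\ep = (1-\chi_\ep)g_0 + \chi_\ep g$, where $\chi_\ep$ is a cutoff supported on a ball of radius $\sim\ep$, so $|D^j\chi_\ep|\sim \ep^{-j}$. Since $g-g_0$ vanishes only to second order at $p$ (normal coordinates), the term $(D^3\chi_\ep)(g-g_0)$ contributes $\sim \ep^{-3}\cdot\ep^2 = \ep^{-1}$ to $D^3 g_\ep$; so $\|g_\ep\|_{C^2}$ stays bounded but $\|g_\ep\|_{C^3}\to\infty$ as $\ep\to 0$. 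Elliptic bootstrapping with only uniform $C^2$ control on the coefficients of $T_{g_\ep}$ gives (at best) uniform $H^3$ bounds on the harmonic forms, and $H^3\hookrightarrow C^1$ requires $3>d/2+1$, i.e.\ $d\le 3$, whereas the application is to even $d\ge 4$. So your last step fails precisely where it is needed. The fix is the one the paper uses: set up the abstract lemma on the Banach space of closed forms with the $C^1$-sup norm, so that the kernel convergence lands directly in $C^1$ and no elliptic bootstrap (and no higher-regularity hypotheses on $g_\ep$) is required.
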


\begin{proof}
Let $D$ be the subspace of smooth closed $k$-forms on $\M$. We equip $D$ with the supremum- $C^1$ norm:
\[
\| \omega \|_{C^1,sup}:=\max\{ \|\omega\|_{sup}, \|T\omega\|_{sup}  \},
\]
where all the norms are w.r.t $g_0$.

Let $\delta_{\ep}$ be the adjoint of the exterior derivative $d$ w.r.t the metric $g_{\ep}$. We consider $\delta_{\ep}$ as as family of bounded linear operators%, where $g \to g_0$ in $C^1$.
\[
\delta_{\ep}:(D,\| \cdot \|_{C^1,sup}) \to (\Omega^{k-1}(\M),\| \cdot \|_{sup})
\]
where the target norm is again w.r.t $g_0$.

We claim that the image of each $\delta_{\ep}$ is closed in $\Omega^{k-1}(\M)$: 

Indeed, by the Hodge theorem
\[
 \delta_{\ep}(\Omega^k(\M)) \supseteq  \delta_{\ep}(D) \supseteq \delta_{\ep}\big(\text{Image}(d)\big) = \delta_{\ep}(\Omega^k(\M))=(\text{Image}(d)\oplus H_{g_{\ep}})^{\perp}
 \] 
so $ \delta_{\ep}(D)= \delta_{\ep}(\Omega^k(\M))  $ is closed in $\Omega^k(\M)$ w.r.t the $L^2$ metric induced by $g_{\ep}$ hence (since $\M$ is compact)  also w.r.t to the $L^2$ metric induced by $g_0$. This implies it is closed w.r.t the uniform norm $\|\cdot\|_{sup}$.

Also, it is easy to see that
$
\| \delta_{\ep}-\delta_{0} \|_{op} \le C(g_0) \|g_{\ep}-g_0\|_{C^1,sup}, 
$
so $\delta_{\ep} \to \delta_{0}$ when $\ep \to 0$ (we assumed $g_{\ep} \to g_0$ in $C^1$).

\begin{comment}
We need the metrics to be $C^1$-close since the operator $\delta_g$ depends on the metric $g$ and on its first derivatives; this is due to the presence of the Hodge dual operator, whose expression  in coordinates is
\[
\star (\mathrm{d}x^{i_1}\wedge...\wedge \mathrm{d}x^{i_k}) =
  \frac{\sqrt{\det g}}{(n-k)!} g^{i_1 j_1} ... g^{i_k j_k} \ \varepsilon_{j_1
  ... j_n} \ \mathrm{d}x^{j_{k+1}} \wedge ... \wedge \mathrm{d}x^{j_n}\,.
\]
\end{comment}
Thus, we have a family of operators $\delta_{\ep}$ with closed images which converge to an operator $\delta_{0}$, where all the operators in the family have finite-dimensional kernels of the same dimension. By \lemref{prop:kernels_convergence_perturb}, the kernels converge in the required sense. 

\end{proof}

\subsection{Metric Approximations and extensions }

\label{subsec:Metric_extension_and_approximation}

In order to use \propref{prop:stability_of_harmonic_forms}, we need to study some problems of extension and approximation of Riemannian metrics. In particular, we shall need metric approximations in the $C^1$ sense. The key idea is that every metric is locally $C^1$-close to being Euclidean. 

\begin{proposition}
\label{prop:approx_metric_by_flat}
Let $\M=\mathbb{T}^d$ be the $d$-dimensional torus, and let $p \in \M$. Let $U \subseteq \M$ be a neighbourhood of $p$, and let $g$ be a Riemannian metric on $U$. 

Then, there exists a family of neighbourhoods $U_{\epsilon} \subseteq U$ around $p$, and a corresponding family of smooth Riemannian metrics $g_{\epsilon}$ on $\M$ with the following properties:

1. $g_{\epsilon}|_{U_{\epsilon}}=g|_{U_{\epsilon}}$.

2. $\| g_{\epsilon}-g_0\|_{C^1} = \mathcal{O}(\epsilon)$ everywhere on $\mathbb{T}^d$,

where $g_0$ is a metric which is isometric to the standard flat metric induced on $\mathbb{T}^d$ by the Euclidean metric on $\mathbb{R}^d$ (i.e. $g=\phi^*g_0$ where $g_0$ is the standard flat metric induced from $\mathbb{R}^d$, and $\phi:\mathbb{T}^d \to \mathbb{T}^d$ is a diffeomorphism). %("one of" since the induced metric depends on the lattice structure chosen on $\mathbb{R}^d$ for the identification).

%Here $g_0$ is a metric which is isometric to one of the standard flat metrics induced on $\mathbb{T}^d$ by the Euclidean metric on $\mathbb{R}^d$. %("one of" since the induced metric depends on the lattice structure chosen on $\mathbb{R}^d$ for the identification).

\end{proposition}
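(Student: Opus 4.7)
The key observation is that in $g$-normal coordinates at $p$, the metric takes the form $\delta + h$ with $h$ vanishing to second order at the origin, so $|h(x)| \le C|x|^2$ and $|\partial h(x)| \le C|x|$ in a neighbourhood of $0$. This quadratic vanishing is exactly what is needed to absorb the $O(1/\epsilon)$ blow-up produced by a cutoff at scale $\epsilon$: Leibniz gives
\[
|\partial(\chi_\epsilon h)| \le |\partial \chi_\epsilon|\cdot |h| + |\chi_\epsilon| \cdot |\partial h| \le \tfrac{C}{\epsilon}\cdot C(2\epsilon)^2 + C(2\epsilon) = O(\epsilon),
\]
while trivially $|\chi_\epsilon h| = O(\epsilon^2)$, so one ends up with a global $C^1$-estimate of order $\epsilon$.

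Concretely, I would first identify $T_p\mathbb{T}^d$ with $\mathbb{R}^d$ via a $g$-orthonormal frame and let $\psi := \exp_p^g : B_R(0) \to U' \subset U$ be the $g$-exponential map, with $R$ small enough that $\psi$ is a diffeomorphism onto its image and $\hat g := \psi^* g = \delta + h$ satisfies the bounds above. After shrinking $R$ so that $R<1/4$, the standard quotient $q : \mathbb{R}^d \to \mathbb{T}^d = \mathbb{R}^d/\mathbb{Z}^d$ is injective on $B_R(0)$, and $q \circ \psi^{-1} : U' \to q(B_R(0))$ is a local diffeomorphism of $\mathbb{T}^d$ near $p$. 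After possibly shrinking $U'$, the isotopy extension theorem (applied to the two smoothly embedded closed balls $\overline{U'}$ and $\overline{q(B_R(0))}$ in the connected manifold $\mathbb{T}^d$) yields a global diffeomorphism $\phi : \mathbb{T}^d \to \mathbb{T}^d$ that agrees with $q \circ \psi^{-1}$ on a neighbourhood of $p$.

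Now fix a standard cutoff $\chi_\epsilon \in C^\infty(\mathbb{R}^d, [0,1])$ with $\chi_\epsilon \equiv 1$ on $B_\epsilon(0)$, $\operatorname{supp}(\chi_\epsilon) \subset B_{2\epsilon}(0)$, and $|\partial \chi_\epsilon| \le C/\epsilon$. For $2\epsilon < R$ define a smooth metric on $\mathbb{T}^d$ by setting $\bar g_\epsilon := q_*(\delta + \chi_\epsilon h)$ on $q(B_R(0))$ and $\bar g_\epsilon := q_*\delta$ elsewhere (these two formulas agree on the annular region $q(B_R \setminus B_{2\epsilon})$). Set $g_\epsilon := \phi^* \bar g_\epsilon$, $g_0 := \phi^*(q_*\delta)$, and $U_\epsilon := \phi^{-1}(q(B_\epsilon(0)))$. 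Since $\phi$ is by construction an isometry from $g_0$ to $q_*\delta$, the pullback preserves $C^1$ norms, so $\|g_\epsilon - g_0\|_{C^1} = \|\bar g_\epsilon - q_*\delta\|_{C^1}$, which in the standard chart equals $\|\chi_\epsilon h\|_{C^1(B_R, \delta)}$; the estimate from the first paragraph then yields $O(\epsilon)$. On $U_\epsilon$ one has $\chi_\epsilon \equiv 1$ and $\phi = q \circ \psi^{-1}$, so $g_\epsilon|_{U_\epsilon} = \phi^*(q_* \psi^* g)|_{U_\epsilon} = g|_{U_\epsilon}$, as required.

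The main technical ingredient here is the global extension in the second step: producing a diffeomorphism of $\mathbb{T}^d$ matching the $g$-normal chart at $p$ with the standard torus chart at $q(0)$. This is where I expect the only non-routine work; once this extension is available, the rest reduces to the cutoff computation exploiting the second-order vanishing of $g - \delta$ in normal coordinates.
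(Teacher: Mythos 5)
Your proof is correct and takes essentially the same route as the paper: expand $g$ in normal coordinates to get quadratic vanishing of $g-\delta$, extend the flat coordinate metric to a global flat metric on $\mathbb{T}^d$ via a diffeomorphism extension theorem (the paper cites Palais's extension-of-diffeomorphisms theorem where you invoke the isotopy extension theorem, but these serve the same purpose here), and then interpolate by a cutoff at scale $\epsilon$, with the $1/\epsilon$ from $\partial\chi_\epsilon$ absorbed by the $O(\epsilon^2)$ bound on $g-\delta$. The only difference is organizational — the paper separates the diffeomorphism extension and the cutoff interpolation into two standalone lemmas, and the paper's interpolation $(1-\chi)g_0+\chi g$ is, in the normal chart, exactly your $\delta+\chi_\epsilon h$.
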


In order to prove \propref{prop:approx_metric_by_flat} we shall need the following preliminary lemmas:

\begin{lemma}
\label{lem:flat_metric_extension}
Let $\mathbb{T}^d$ be the $d$-dimensional torus. Let $p \in \M$, and let $U \subseteq \mathbb{T}^d$ be a neighbourhood of $p$. Let $g$ be a flat Riemannian metric defined on $U$.

Then, (after a possible shrinking of $U$) $g$ can be extended to a smooth metric $\tilde g$ on $\mathbb{T}^d$ which is isometric to the standard flat metric on $\mathbb{T}^d$ induced by the Euclidean metric on $\mathbb{R}^d$. 
\end{lemma}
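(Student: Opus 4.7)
The plan is to transfer the local flat geometry of $(U,g)$ to a small region of the standard flat torus and then extend the resulting local isometry to a diffeomorphism of all of $\mathbb{T}^d$; the desired extension $\tilde g$ will then be the pullback of the standard flat metric on $\mathbb{T}^d$ by that global diffeomorphism.

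First, since $g$ is flat on $U$, the local classification of flat Riemannian metrics (existence of Euclidean coordinates for a flat metric) yields a possibly smaller neighbourhood $V\subseteq U$ of $p$ together with a diffeomorphism $\psi:V\to B$, where $B\subseteq \R^d$ is an open ball, such that $\psi^* g_{\mathrm{euc}}=g$ on $V$. After further shrinking, arrange $B\subseteq (-\tfrac14,\tfrac14)^d$, so that the quotient map $\pi:\R^d\to \mathbb{T}^d=\R^d/\mathbb{Z}^d$ is a diffeomorphism when restricted to $B$. The composition $\iota:=\pi\circ\psi:V\to\mathbb{T}^d$ is then an open isometric embedding of $(V,g)$ into $(\mathbb{T}^d,g_{\mathrm{std}})$, where $g_{\mathrm{std}}$ is the standard flat metric on $\mathbb{T}^d$.

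Next, I would extend $\iota$, restricted to an even smaller neighbourhood $V_0\subseteq V$ of $p$, to a global diffeomorphism $\Phi:\mathbb{T}^d\to\mathbb{T}^d$. Composing $\iota$ with a translation of $\mathbb{T}^d$ --- which is an isometry of $g_{\mathrm{std}}$ and hence preserves the identity $\iota^* g_{\mathrm{std}}=g$ --- I may assume $\iota(p)=p$. Inside a common chart around $p$, $\iota$ is then a local diffeomorphism of an open subset of $\R^d$ fixing the origin. If $d\iota_p$ is orientation-reversing, precompose $\iota$ with a fixed orientation-reversing isometry of $(\mathbb{T}^d,g_{\mathrm{std}})$, for instance the involution induced by a coordinate reflection, so that without loss of generality $d\iota_p\in\GL^+(\R^d)$. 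Using the path-connectedness of $\GL^+(\R^d)$ together with a standard cutoff argument, one builds a compactly supported smooth isotopy of $\mathbb{T}^d$ between $\iota|_{V_0}$ and the identity; its time-one map is the desired $\Phi$, which agrees with $\iota$ on $V_0$ and with the identity outside a compact neighbourhood of $p$. Setting $\tilde g:=\Phi^* g_{\mathrm{std}}$ yields a smooth Riemannian metric on $\mathbb{T}^d$, globally isometric to $g_{\mathrm{std}}$ via $\Phi$, and since $\Phi$ coincides with $\iota$ on $V_0$,
\[
\tilde g|_{V_0}=(\Phi|_{V_0})^* g_{\mathrm{std}}=\iota^* g_{\mathrm{std}}=g|_{V_0},
\]
so $\tilde g$ extends $g$ on a neighbourhood of $p$, as required after the permitted shrinking of $U$.

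The step I expect to require the most care is the global extension of $\iota$: producing a diffeomorphism of $\mathbb{T}^d$ that agrees with the given local isometry near $p$. The main subtlety is the orientation bookkeeping --- one has to verify that, after absorbing an orientation-reversing global isometry of $g_{\mathrm{std}}$ if necessary, the local diffeomorphism can be connected to the identity by an isotopy which can then be truncated by a cutoff function to produce a genuine global diffeomorphism smoothly interpolating between $\iota$ near $p$ and the identity outside a slightly larger compact set. The other steps (the Euclidean coordinate chart, the injectivity of $\pi$ on a small cube, the pullback identities) are routine.
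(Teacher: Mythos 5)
Your proof takes the same route as the paper's: realise $g$ via a local Euclidean chart as the pullback of the standard flat metric from a small piece of the torus, extend the resulting local isometric embedding $\iota$ to a global self-diffeomorphism $\Phi$ of $\mathbb{T}^d$, and set $\tilde g = \Phi^* g_{\mathrm{std}}$. The only place the two diverge is in the extension step. The paper simply quotes a theorem of Palais guaranteeing that a local diffeomorphism (of an open subset of a connected manifold into itself, with the right orientation) extends to a global diffeomorphism, whereas you sketch a hands-on construction: normalise so that $\iota(p)=p$ and $d\iota_p\in\GL^+(\R^d)$, isotope $\iota|_{V_0}$ to the identity through embeddings (Alexander trick to linearise, then a path in $\GL^+(\R^d)$), and cut off to obtain the ambient diffeomorphism. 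The orientation normalisation is handled correctly --- precomposing with a coordinate reflection is an isometry of $(\mathbb{T}^d,g_{\mathrm{std}})$, so the pullback identity survives. The one step you wave at, the ``standard cutoff argument'', hides the sole genuinely nontrivial point: a naive truncation of a path of linear maps, say $x\mapsto A_{\chi(|x|)}x$, need not be injective. What you actually need is the isotopy extension theorem, which promotes your isotopy of embeddings $V_0\hookrightarrow\mathbb{T}^d$ to an ambient isotopy of $\mathbb{T}^d$ supported near $p$, whose time-one map is $\Phi$; but that theorem is exactly (a version of) the Palais result the paper cites. So your argument is correct in outline and slightly more self-contained, but at the critical juncture it silently re-derives, rather than replaces, the quoted theorem; you should either invoke the isotopy extension theorem by name or carry out the truncation with more care.
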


\begin{proof}
Every flat metric is locally isometric to an Euclidean ball. So, there exists a smooth isometry $\phi:(U,g) \to (\R^d,e)$ (perhaps after a possible shrinking of $U$). If $U$ is sufficiently small, we can consider $\phi(U)$ as a subset of the flat torus, i.e. we now think of $\phi$ as an isometric embedding $\phi:(U,g) \to (\mathbb{T}^d,e)$. By a known result on extension of diffeomorphisms (\cite{palais1959natural}, Theorem 5.5), $\phi$ can be extended to a bijective diffeomorphism $\tilde \phi:\mathbb{T}^d \to \mathbb{T}^d$.

The metric $\tilde g:=\tilde \phi^* e$ is then an extension of $g$ that is isometric to the standard metric $e.$
\end{proof}

\begin{lemma}
\label{lem:metric_extension_lemma}
Let $\M$ be a smooth manifold, and let $g_0$ be a Riemannian metric on $\M$.  Let $p \in \M$, and let $U \subseteq \M$ be a neighbourhood of $p$. Suppose that $g$ is a metric on $U$, which satisfies $\| g-g_0|_{U}\|_{C^0} < \epsilon^2,\| g-g_0|_{U}\|_{C^1} < \epsilon$.
%on $U$, which satisfies $\| g-g_0|_{U_{\epsilon}}\|_{C^0} < \epsilon^2,\| g-g_0|_{U_{\epsilon}}\|_{C^1} < \epsilon$ on $U$.

Then there exists a metric $\tilde g$ on $\M$ which coincides with $g$ on some neighbourhood of $p$, such that $\| \tilde g-g_0\|_{C^1} \le C \mathcal{o}\epsilon ( \frac{\epsilon}{\text{diam}(U)}+1)$,  where $C$ is a universal constant. %( i.e. bounded by something which tends to zero when $\epsilon \to 0$).
\end{lemma}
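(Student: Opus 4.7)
The plan is to produce $\tilde g$ as a convex combination of $g$ and $g_0$ glued via a cutoff function. Concretely, pick a smooth cutoff $\chi : \M \to [0,1]$ with $\chi \equiv 1$ on some smaller neighbourhood $V \Subset U$ of $p$, $\mathrm{supp}(\chi) \Subset U$, and whose derivative satisfies $\|\nabla \chi\|_{C^0} \le C/\mathrm{diam}(U)$, where the gradient is taken with respect to $g_0$. Such a cutoff exists by standard constructions: work in a chart around $p$ contained in $U$, mollify the characteristic function of a ball of radius comparable to $\mathrm{diam}(U)$, and transfer back. Now define
\[
\tilde g := g_0 + \chi\,(g - g_0)
\]
on $U$, extended by $g_0$ outside $\mathrm{supp}(\chi)$. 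Since $\chi$ is compactly supported in $U$, this extension is globally smooth. It is a Riemannian metric because at every point it is a convex combination of the positive-definite tensors $g(x)$ and $g_0(x)$, and it agrees with $g$ on $V$, a neighbourhood of $p$.

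The remaining content is to estimate $\|\tilde g - g_0\|_{C^1}$. Write $h := g - g_0$, so $\tilde g - g_0 = \chi h$ on $U$ and vanishes off $\mathrm{supp}(\chi)$. For the $C^0$ part,
\[
\|\tilde g - g_0\|_{C^0} \le \|\chi\|_{C^0}\,\|h\|_{C^0} \le \epsilon^2 .
\]
For the first-order part, the product rule gives $\nabla(\chi h) = (\nabla \chi)\otimes h + \chi\, \nabla h$, so
\[
\|\nabla(\tilde g - g_0)\|_{C^0}
\le \|\nabla\chi\|_{C^0}\,\|h\|_{C^0} + \|\chi\|_{C^0}\,\|\nabla h\|_{C^0}
\le \frac{C}{\mathrm{diam}(U)}\cdot\epsilon^2 + \epsilon
= \epsilon\!\left(\frac{C\epsilon}{\mathrm{diam}(U)} + 1\right).
\]
Adding the two and absorbing constants yields the asserted bound $\|\tilde g - g_0\|_{C^1} \le C\,\epsilon\bigl(\tfrac{\epsilon}{\mathrm{diam}(U)} + 1\bigr)$.

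The execution is essentially a one-line computation once the cutoff is set up, so the only honest difficulty is the \emph{quantitative} construction of $\chi$: we need the derivative bound $\|\nabla \chi\|_{C^0} \lesssim 1/\mathrm{diam}(U)$, because this is what allows the gradient-of-cutoff term $(\nabla\chi)\otimes h$ to contribute only $\epsilon^2/\mathrm{diam}(U)$ rather than something worse, which is exactly how the $C^0$-gain $\|h\|_{C^0} < \epsilon^2$ (stronger than the $C^1$-bound $\epsilon$) is exploited. Secondary points to double-check are that the tensor norms in the chart are comparable to the intrinsic $g_0$-norms on $U$ (true up to a constant depending on $g_0$, which is harmless since the statement allows a universal constant $C$), and that the patching across $\partial\,\mathrm{supp}(\chi)$ produces a genuinely smooth global tensor (automatic from $\chi\in C^\infty$ and $\tilde g \equiv g_0$ in an open collar).
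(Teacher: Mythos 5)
Your proof is correct and follows essentially the same route as the paper: define $\tilde g = (1-\chi) g_0 + \chi g$ with a cutoff $\chi$ whose gradient scales like $1/\mathrm{diam}(U)$, then estimate $\nabla(\chi h) = (\nabla\chi)\otimes h + \chi\,\nabla h$ using the $C^0$-bound $\epsilon^2$ for the first term and the $C^1$-bound $\epsilon$ for the second. You are in fact a bit more careful than the paper in making the cutoff construction explicit, but the argument is the same.
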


\begin{proof}
Define $\tilde{g} = (1-\chi)g_0 + \chi g$, where $0 \le \chi \le 1$ is a smooth function that is identically $1$ on a neighbourhood of $p$ and compactly supported on $U$. 
This means that $|d\chi| \approx \frac{1}{\text{diam}(U)}$ on average, so

\beq
\label{eq:met_est1}
|d\tilde{g} - dg_0|\le   |d\chi| |g-g_0| +   |\chi||dg-dg_0| \le \frac{1}{\text{diam}(U)}\epsilon^2+\epsilon=\epsilon ( \frac{\epsilon}{\text{diam}(U)}+1).
\eeq
Since for sufficiently small $\epsilon$ we also have
\beq
\label{eq:met_est2}
|\tilde{g} - g_0|=|\chi(g-g_0)| \le |g - g_0| < \epsilon^2 < \epsilon,
\eeq
we deduce by \eqref{eq:met_est1} and \eqref{eq:met_est2} that $\| \tilde g-g_0\|_{C^1} \approx \mathcal{o}\epsilon ( \frac{\epsilon}{\text{diam}(U)}+1)$.

%Since $\text{diam}(U) \approx \epsilon$, we get an error of order $1$.

\end{proof}

\begin{proof}[Of \propref{prop:approx_metric_by_flat}]
%When the $U_{\epsilon}$ become smaller and smaller, the given metric $g|_{U_{\epsilon}}$ becomes closer and closer to be Euclidean- in fact it becomes $C^1$-close to being Euclidean. Indeed, here is the expansion of the metric $g$ in normal coordinates:
Consider the expansion of the metric $g$ in normal coordinates around $p$:
\[
g_{ij} = \delta_{ij} - \frac{1}{3} R_{ikjl} \,x^kx^l + \mathcal{O}(|x|^3),
\]
which implies 
\beq
\label{eq:met_est3}
\|g_{ij}-\delta_{ij}\|_{C^1}= \mathcal{O}(|x|) , \|g_{ij}-\delta_{ij}\|_{C^0}= \mathcal{O}(|x|^2) ,
\eeq
 (where $|x|$ is measured w.r.t $g$).

%so $g$ is $C^1$-close to the flat metric $\delta_{ij}$ defined in a neighbourhood of $p$. 

By \lemref{lem:flat_metric_extension}, $\delta_{ij}$ can be extended to a smooth metric $g_0$ on $\mathbb{T}^d$ which is isometric to the standard flat metric on $\mathbb{T}^d$. (It is possible that we will have to shrink $U$ before we extend).

Let $B_{\epsilon}(p)$ be the $\epsilon$-ball around $p$ w.r.t. $g$. Then, \eqref{eq:met_est3} implies
\beq
\label{eq:met_est4}
\|g-g_0|_{B_{\epsilon}(p)}\|_{C^1}= \mathcal{O}(\epsilon) , \|g-g_0|_{B_{\epsilon}(p)}\|_{C^0}= \mathcal{O}(\epsilon^2) ,
\eeq

so by \lemref{lem:metric_extension_lemma}, there exists a metric $g_{\epsilon}$ on $\M$ which coincides with $g$ on some neighbourhood $U_{\epsilon}$ of $p$, such that $\| g_{\epsilon}-g_0\|_{C^1} =\mathcal{O}(\epsilon)$.
\end{proof}

%\subsection{Convergence of harmonic forms}
\subsection{Proof of Theorem \ref{thm:existence_of_harmonic_frames}}

\label{subsec:proof_harmonic_frames}
%\begin{proposition}
%Let $\M=\mathbb{T}^d=\mathbb{R}^d/\mathbb{Z}^d$ be the $d$-dimensional torus, and let $p \in \M$. Let $U \subseteq \M$ be a neighbourhood of $p$, and let $g$ be a Riemannian metric on $U$. Then, for a sufficiently small neighbourhood of $p$, the closed and $g$-co closed forms form a frame for $\bigwedge^k T^*\M$  on $U$.
%\end{proposition}
%
%Let $(\M,g)$ be a smooth Riemannian manifold, and let $p \in \M$. Let $1 \le k \le d$. There exist an open neighbourhood $U$ of $p$ that admits a local frame for $\bigwedge^k T^*\M$ which consists of closed and co-closed forms. 

\begin{proof} Since the claim is local, we can assume we just have a Riemannian metric on some open neighbourhood $U \subset \mathbb{T}^d$ around a given point $p \in \mathbb{T}^d$.

By \propref{prop:approx_metric_by_flat} there exist neighbourhoods $U_{\epsilon} \subseteq U$ of $p$, and metrics $g_{\epsilon}$ on $\mathbb{T}^d$ such that $g_{\epsilon}|_{U_{\epsilon}}=g|_{U_{\epsilon}}$ and $\| g_{\epsilon}-g_0\|_{C^1} = \mathcal{O}(\epsilon)$ everywhere on $\mathbb{T}^d$. (where $g_0$ is a metric which is isometric to the standard flat metric induced on $\mathbb{T}^d$ by the Euclidean metric on $\mathbb{R}^d$.)

Thus, we have $g_{\ep} \to g_0$ in $C^1$ when $\ep \to 0$. By \propref{prop:stability_of_harmonic_forms}, there exist bases $\omega^1_{\ep},\dots,\omega^s_{\ep}$ for $H_{g_{\ep}}$ which converge to a basis $\omega^1,\dots,\omega^s$ of $H_{g_0}$ in the uniform $C^1$ norm. In particular $\omega^i_{\ep}(p) \to \omega^i(p)$. 
We now claim that the $\omega^i(p)$ form a basis for $\bigwedge^k T_p^*\mathbb{T}^d$:

Consider the evaluation map $H_{g_0} \to \bigwedge^k T_p^*\mathbb{T}^d$ at $p$:
  This is a linear surjective map (Since the projected $dx^I$ from $\R^d$ are a global basis for  $H_{g_0}$). The space of harmonic $k$ forms is isomorphic to the real $k$-th cohomology; in the case of the torus, $H_{g_0} \cong H^k(\mathbb{T}^d,\mathbb{R})$ has the same dimension as $\bigwedge^k T_p^*\mathbb{T}^d$.  So, the evaluation map at $p$ is an isomorphism; in particular it maps a basis into a basis.
  
Since "being a basis" is an open condition, it follows that the $\omega^i_{\ep}(p)$ form a basis for $\bigwedge^k T_p^*\mathbb{T}^d$ for sufficiently small $\ep$. Furthermore, they stay a basis in a neighbourhood of $p$, thus they form a local frame around $p$, as required.
\end{proof}

\section{Linear algebra miscellany}
\label{sec:appendix2}

\begin{lemma}
\label{lem:k_invariant-subspaces_identity}
Let $V$ be a $d$-dimensional real vector space. Let $S:V \to V$ be a linear map, and let $1 \le k \le d-1$ be fixed. Suppose that every $k$-dimensional subspace is $S$-invariant. Then $S$ is a multiple of the identity.
\end{lemma}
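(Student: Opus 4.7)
The plan is to reduce the hypothesis to the classical case $k=1$, from which the conclusion follows by a short and standard argument.

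\emph{Step 1: Reduction to the case $k=1$.} I claim that under the given hypothesis, every $1$-dimensional subspace of $V$ is $S$-invariant. Fix $v \neq 0$ and assume toward contradiction that $Sv \notin \Span\{v\}$, so $v$ and $Sv$ are linearly independent. I would then construct a $k$-dimensional subspace $W$ with $v \in W$ and $Sv \notin W$; this would contradict the assumed $S$-invariance of $W$. Such a $W$ exists because $k \le d-1$: pick any hyperplane $H \subseteq V$ that contains $v$ but not $Sv$ (possible by the linear independence of $v$ and $Sv$), and then take $W$ to be any $k$-dimensional subspace of $H$ containing $v$ (possible since $\dim H = d-1 \ge k$ and $v \in H$). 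This establishes the claim.

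\emph{Step 2: Every nonzero vector is an eigenvector of $S$.} By Step 1, for each $v \neq 0$ there is a scalar $\lambda(v) \in \R$ such that $Sv = \lambda(v) v$.

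\emph{Step 3: The scalar $\lambda(v)$ is independent of $v$.} For linearly dependent $v, w$ this is immediate, so assume $v, w$ are linearly independent. Compute
\[
\lambda(v)\, v + \lambda(w)\, w = Sv + Sw = S(v+w) = \lambda(v+w)(v+w).
\]
Since $\{v,w\}$ is linearly independent, matching coefficients yields $\lambda(v) = \lambda(v+w) = \lambda(w)$. Thus $\lambda$ takes a common value $\lambda$ on all nonzero vectors, and $S = \lambda \id$.

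The only non-routine part of the argument is the subspace construction in Step 1; once this is in hand, Steps 2 and 3 are the standard proof that a linear operator with every line invariant is a scalar. No difficulty is anticipated in carrying this plan out, as the dimension count $k \le d-1$ is precisely what makes the hyperplane construction available.
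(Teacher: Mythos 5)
Your proposal is correct, and it follows essentially the same route as the paper: reduce to the well-known $k=1$ case by showing every line $\Span\{v\}$ is $S$-invariant, then run the standard eigenvalue-matching argument. The only cosmetic difference is in the reduction step: you argue by contradiction, constructing a $k$-dimensional subspace in a hyperplane separating $v$ from $Sv$, whereas the paper observes directly that $\Span\{v\}=\bigcap_{W\in X(v)}W$ over all $k$-dimensional $W$ containing $v$ and invokes invariance of intersections; these are the same idea packaged differently.
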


\begin{proof}
Let $v\in V$ and let $X(v)$ be the collection of $k$-dimensional subspaces of $V$ that contain $v$. Then 
\[
\langle v\rangle=\bigcap_{W\in X(v)}W,
\]
and since each $W$ is $S$-invariant, $\langle v\rangle$ is also $S$-invariant, reducing the problem to the well known-case $k=1$. For completeness, we provide here the proof of this case:
Let $v,w$ be linearly independent. Then $Sv=\lambda_v v, Sw=\lambda_w w, S(v+w)=\lambda_{v+w} (v+w)$. Thus
\[
\lambda_v v+\lambda_w w=\lambda_{v+w} v+\lambda_{v+w} w, 
\]
which implies
$\lambda_v=\lambda_w$.
\end{proof}

\section{A proof $\psi:H_{>k} \to \til H_{>k}$ is an immersion}
\label{sec:minors_map_immersion}

Let $V$ be a $d$-dimensional real vector space. Fix an odd $1 \le k \le d-1$. Define
$H_{>k}=\{ A \in \End(V) \mid \operatorname{rank}(A) > k 
\}$. $H_{>k}$ is an open submanifold of $\End(V)$.
Consider the smooth map
\[
\psi:H_{>k} \to \End(\Lambda_k(V)) \, \,, \, \, \psi(A)=\extp^{k}A,
%\psi:H_r \to \text{End}(\bigwedge^{k}V) \, \,, \, \, \psi(A)=\bigwedge^{k}A,
\]
\corref{cor:almost_injectivity_of_exterior_map_large_rank_case} implies $\psi$ is injective. We will shows $\psi$ is an immersion:

First, we observe that there is a natural action on $H_{>k}$ by $\GL(V) \times \GL(V)$, given by
\[
(A,B) \cdot C=ACB^{-1}.
\] 
As is well known (and easy to prove) every two endomorphism of rank $r$ are lie in the same orbit under this action. (This is a coordinate-free way of saying that every matrix of rank $r$ is equivalent to a diagonal matrix, with the first $r$ diagonal elements are $1$'s, and all the rest are zeros). In other words, the action described above has a finite number of orbits, corresponding to the different ranks: All endomorphisms of a given rank $r$ form an orbit.

We now also note that there is a natural $\GL(V) \times \GL(V)$ on  $\End(\Lambda_k(V))$, given by
\[
(A,B) \cdot D=\extp^k A \circ D \circ \extp^k B^{-1}.
\]
It is immediate to see that our map $\psi:H_{>k} \to \End(\Lambda_k(V))$ is equivariant w.r.t these actions.
Since the rank of an equivariant map is constant on orbits, it suffices to prove that $d\psi$ is injective at some canonical representative of each orbit (rank). We prove this in the following lemma:

\begin{lemma}
Let $e_i$ be a basis of $V$, and let $r >k$.  Suppose that $A \in \End(V)$ is given by the following formula:
\[
Ae_i=e_i \, \, \, \text{for } \, \, 1 \le i \le r, \, \,  \, \, \, Ae_j=0 \, \, \text{ for } \, \, r+1 \le j \le d.
\] 
Then $d\psi_A$ is injective.
\end{lemma}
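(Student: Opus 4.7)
The plan is to compute $d\psi_A(B)$ explicitly using the Leibniz-type formula
\[
d\psi_A(B)(v_1 \wedge \dots \wedge v_k) = \sum_{l=1}^k Av_1 \wedge \dots \wedge Av_{l-1} \wedge Bv_l \wedge Av_{l+1} \wedge \dots \wedge Av_k,
\]
and test the resulting endomorphism of $\extp^k V$ on the basis elements $e_I = e_{i_1} \wedge \dots \wedge e_{i_k}$. Decomposing $V = U \oplus W$ where $U = \Span(e_1,\dots,e_r)$ and $W = \Span(e_{r+1},\dots,e_d)$, $A$ is the projection $\id_U \oplus 0_W$. Writing $B_{mj}$ for the matrix entries of $B$ in the basis $\{e_i\}$, I will show that each $B_{mj}$ must vanish, using the assumption $r > k$ in a crucial way.

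First I would test on multi-indices $I = \{i_1,\dots,i_{k-1},s\}$ with $i_1,\dots,i_{k-1} \le r < s$. Since $Ae_s = 0$, all but one of the $k$ terms in the Leibniz sum vanishes, so the constraint reduces to
\[
e_{i_1} \wedge \dots \wedge e_{i_{k-1}} \wedge Be_s = 0.
\]
Expanding $Be_s = \sum_m B_{ms} e_m$ and varying the indices $i_1,\dots,i_{k-1}$ over subsets of $\{1,\dots,r\}$ (which is possible in many ways since $r \ge k+1 > k-1$), I conclude $B_{ms} = 0$ for every $m$ and every $s > r$. Hence all columns of $B$ indexed by $W$ are zero.

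Next, I would test on $I \subseteq \{1,\dots,r\}$ of size $k$. Then $Ae_{i_l} = e_{i_l}$ for every $l$, and the formula becomes $\sum_l e_{i_1} \wedge \dots \wedge Be_{i_l} \wedge \dots \wedge e_{i_k}$. Extracting the coefficient of a basis element $e_J$ where $J = (I \setminus \{i_l\}) \cup \{m\}$ differs from $I$ in a single index gives $\pm B_{m,i_l} = 0$. Since $r \ge k+1$, for any prescribed pair $(m,i)$ with $i \le r$ and $m \ne i$ one can find such an $I \subseteq \{1,\dots,r\}$ containing $i$ but not $m$, so $B_{mi} = 0$. Therefore, among the columns of $B$ indexed by $U$, only the diagonal entries $B_{ii}$ with $i \le r$ may be nonzero. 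Finally, extracting the coefficient of $e_I$ itself gives the trace condition $\sum_{l \in I} B_{ll} = 0$ for every $k$-subset $I \subseteq \{1,\dots,r\}$.

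The last step is purely linear-algebraic. Comparing the trace conditions for $I$ and $I'$ differing in a single index (again possible because $r \ge k+1$) shows that $B_{ii} = B_{jj}$ for all $i,j \le r$; calling the common value $c$, the trace condition reads $kc = 0$, so $c = 0$. Combined with the previous steps this forces $B = 0$, proving injectivity of $d\psi_A$. The only substantive obstacle is bookkeeping: making sure that the hypothesis $r > k$ really does provide enough combinatorial freedom to isolate each matrix entry via a well-chosen $I$, which it does in each of the three situations above.
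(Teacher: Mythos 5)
Your proof is correct and follows essentially the same strategy as the paper's proof: use the Leibniz-type formula for $d\psi_A(B)$, test against wedge products of basis vectors, and isolate each matrix entry of $B$ by exploiting the combinatorial freedom that $r \ge k+1$ provides. The only difference is the order in which you dispose of the three families of entries (columns indexed above $r$; off-diagonal entries in columns $\le r$; diagonal entries in columns $\le r$) -- the paper handles the diagonal first and the high columns last, while you do the reverse -- which has no bearing on the argument.
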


\begin{proof}
$A \in H_{>k}$. Since $H_{>k}$ is an open submanifold of $\End(V)$, $T_AH_{>k}=T_A\End(V)\simeq \End(V)$. Let $B \in \ker d\psi_A \subseteq \End(V)$. Let $1 \le i_1 < i_2 < \dots < i_k \le d $.

Then
\[
0=d\psi_A(B) (e_{i_1} \wedge  \dots \wedge e_{i_k})= \sum_{s=1}^k Ae_{i_1} \wedge  \dots \wedge Be_{i_s}\wedge \dots \wedge Ae_{i_k}
\]

If $i_k \le r$, then 
\[
0=d\psi_A(B) (e_{i_1} \wedge  \dots \wedge e_{i_k})= \sum_{s=1}^k e_{i_1} \wedge  \dots \wedge Be_{i_s}\wedge \dots \wedge e_{i_k}
\]
%(with the summation convention). 
Write $Be_i=\sum_j B_i^je_j$. Then we have
\[
0= \sum_{s=1}^k \sum_j  B_{i_s}^j e_{i_1} \wedge  \dots \wedge e_j\wedge \dots \wedge e_{i_k}
\]

Gathering together all the coefficients of $e_{i_1} \wedge  \dots \wedge e_{i_k}$ together, we obtain $\sum_{s=1}^k B^{i_s}_{i_s}=0$. So, we showed that for every sequence $1 \le i_1 < i_2 < \dots < i_k \le r $, $\sum_{s=1}^k B^{i_s}_{i_s}=0$.  

In particular, choosing $i_j=j$, we obtain  $\sum_{s=1}^k B^{s}_{s}=0$. Since $r>k$, we can also choose $i_j=j$ for $1 \le j \le k-1$, and $i_k=k+1$, so $\sum_{s=1}^{k-1} B^{s}_{s}+B^{k+1}_{k+1}=0$. By subtracting the last two equalities, we deduce $B^{k+1}_{k+1}=B^{k}_{k}$. Repeating a variation on this argument, we conclude that $B^j_j=B^i_i$ for every $1 \le i,j \le r$, and 
$\sum_{s=1}^k B^{i_s}_{i_s}=0$ becomes $kB^1_1=0$ . This implies $B^j_j=0$ for every $1 \le j \le r$. 

Now, let $1 \le i_1 < i_2 < \dots < i_k \le r $. Consider again the equation
\[
0= d\psi_A(B) (e_{i_1} \wedge  \dots \wedge e_{i_k})=\sum_{s=1}^k \sum_j  B_{i_s}^j e_{i_1} \wedge  \dots \wedge e_j\wedge \dots \wedge e_{i_k}
\]
%$j \notin \{i_1,\dots,i_k\}$

Let  $1 \le t \le k$.
For every $j \neq i_t$, the coefficient of $e_{i_1} \wedge  \dots \wedge e_{i_{t-1}} \wedge e_j\wedge e_{i_{t+1}} \wedge\dots \wedge e_{i_k}$ is $B_{i_t}^j$. Indeed, we are in fact looking at the relevant coefficient of this element: $ \sum_{s=1}^k e_{i_1} \wedge  \dots \wedge Be_{i_s}\wedge \dots \wedge e_{i_k}$. If $s \neq t$, then the summand $e_{i_1} \wedge  \dots \wedge Be_{i_s}\wedge \dots \wedge e_{i_k}$ contains $e_{i_t}$ while $e_{i_1} \wedge  \dots \wedge e_{i_{t-1}} \wedge e_j\wedge e_{i_{t+1}} \wedge\dots \wedge e_{i_k}$ does not. Hence, we only need to see what happens for the $t$-summand 
\[
e_{i_1} \wedge  \dots \wedge Be_{i_t}\wedge \dots \wedge e_{i_k}=\sum_j  B_{i_t}^j e_{i_1} \wedge  \dots \wedge e_j\wedge \dots \wedge e_{i_k}.
\]

So, $B_{i_t}^j=0$ for every $1 \le t \le k$, and every $j \neq i_t$. Together with $B_{i_t}^{i_t}=0$, we conclude that $B_{i_t}^j=0$ for every $1 \le j \le d$ and every $1 \le t \le k$. Since $1 \le i_1 < i_2 < \dots < i_k \le r $ was arbitrary, this implies
\[
B_{s}^j=0 , \, \, \text{for every } 1 \le s \le r  \text{ and every } 1 \le j \le d.
\]

Since $Be_s=\sum_j B_s^je_j$, it follows that $Be_s=0$ for every  $1 \le s \le r$. Recall also that $Ae_s=0$ for every  $r+1 \le s \le d$.

Now, consider again
\[
0=d\psi_A(B) (e_{i_1} \wedge  \dots \wedge e_{i_k})= \sum_{s=1}^k Ae_{i_1} \wedge  \dots \wedge Be_{i_s}\wedge \dots \wedge Ae_{i_k}
\]
If we take $i_k=t$ for some $t>r$, then since $Ae_{t}=0$ all the summands except the last one vanish, and the above equality reduces to
\[
 Ae_{i_1} \wedge Ae_{i_2} \wedge  \dots \wedge  Ae_{i_{k-1}} \wedge Be_{t}=0.
 \]
Now, if we take $i_1,\dots,i_{k-1} \le r$, then this reduces to
\[
 e_{i_1} \wedge e_{i_2} \wedge  \dots \wedge  e_{i_{k-1}} \wedge Be_{t}=0,
 \]
which implies $Be_{t} \in \Span \{  e_{i_1} ,e_{i_2} ,  \dots,  e_{i_{k-1}} \}$. %More explicitly: $ e_{i_1} \wedge e_{i_2} \wedge  \dots \wedge  e_{i_{k-1}} \wedge Be_{t}=0$ implies $e_{i_1}, e_{i_2},  \dots,  e_{i_{k-1}} , Be_{t}$ are linearly dependent.

Since we can choose freely the indices $i_1,\dots,i_{k-1}$ this forces $Be_t=0$.

%Let $1 \le i_1 < i_2 < \dots < i_k \le r $ be such a sequence:  
\end{proof}

\bibliographystyle{amsalpha}

%\bibliographystyle{unsrt}
%\bibliographystyle{amsalpha}
%\bibliography{/Users/raz/Dropbox/tex/Refs/MyBibs}
%\bibliography{//Users/asaf/Documents/MyBib}
%%%%%%%%%%%%%%%%%%%%%%%%%%%%%%%%%%%%%%%%%%%%%%
\end{document}